\newtheorem{thm}{Theorem}
\newtheorem{prop}[thm]{Proposition}
\newtheorem{lem}[thm]{Lemma}
\newtheorem{cor}[thm]{Corollary}
\newtheorem{rem}[thm]{Remark}
\theoremstyle{definition}
\newcommand{\R}{\mathbb{R}}
\newcommand{\N}{\mathbb{N}}
\newcommand{\A}{\mathbb{A}}
\newcommand{\B}{\mathbb{B}}
\newcommand{\Z}{\mathbb{Z}}
\newcommand{\dt}{\frac{d}{dt}}
\newcommand{\Lv}{\frac{\partial L}{\partial v}}
\newcommand{\m}{\mathcal}
\newcommand{\E}{\textsl{E}}
\DeclareMathOperator{\supp}{supp}
\begin{document}
\title{Positive topological entropy of Tonelli Lagrangian flows}

\author[G. Contreras]{Gonzalo Contreras$^1$}
\address{$^1$Centro de Investigaci\'on en Matem\'aticas, A.C., Jalisco S/N, Col. Valenciana CP: 36023 Guanajuato, Gto, México}
\email{gonzalo@cimat.mx}

\author[J. A. G. Miranda]{Jos\'e Ant\^onio G. Miranda$^2$}
\address{$^2$Universidade Federal de Minas Gerais, Av. Ant\^onio Carlos 6627, 31270-901, Belo Horizonte, MG, Brasil.}
\email{jan@mat.ufmg.br}

\author[L. G. Perona]{Luiz Gustavo Perona$^3$}
\address{$^3$Centro Federal de Educa\c{c}\~ao Tecnol\'ogica de Minas Gerais, Av. Amazonas 7675, 30.510-000, Belo Horizonte, MG, Brasil} 
\email{lgperona@cefetmg.br}

\subjclass[2000]{ 37B40,  37C10, 37J99}

\begin{abstract}
We study the topological entropy of the Lagrangian flow restricted to an energy level $E_{L}^{-1}(c) \subset TM$ for $ c >e_0(L)$. We prove that if the flow of the Tonelli Lagrangian $ L: M \to \R$, on a closed manifold of dimension $ n+1$, has a non-hyperbolic closed orbit or an infinite number of closed orbits with energy $ c>e_0(L)$ and satisfies certain open dense conditions, then there exist a smooth potential $ u: M\to \R $, with $ C^2$-norm arbitrarily small, such that the flow of the perturbed    Lagrangian $ L_u=L-u$ restricted to  $E_{L_u}^{-1}(c)$ has positive topological entropy.
The proof of this result is based on an analog version of the Franks' Lemma for Lagrangian flows and Ma\~n\'e's techniques on dominated splitting. As an application,   we show that if   $\dim (M)=2$   and $c > e_0(L)$, then $ L$ admits a $C^2$-perturbation by a smooth potential $u$,  such that, the perturbed flow $\phi_t^{L_u}\big{|}_{E_{L_u}^{-1}(c)}$ has positive topological entropy.     
\end{abstract}

\maketitle


\section{Introduction}

Let $ M $ be a closed smooth manifold of dimension $ n+1\geq 2$ with  a smooth 
Riemannian metric $ \langle \cdot, \cdot \rangle$.
A Tonelli Lagrangian on $ M$ is a smooth function $L: TM\to \R$ satisfying the two conditions:
\begin{itemize}
\item {\it convexity}:
 for each fiber $ T_x M $, the restriction $
L(x,\cdot):T_xM \to \R $ has positive defined  Hessian, 
\item {\it superlinearity}:  
$\lim_{\| v\|\rightarrow \infty} \frac{L(x,v)}{\|v\|} = \infty ,$
uniformly  in $ x \in M$.
\end{itemize}
The  action of $L$ over an absolutely continuous curve $\gamma :[a,b] \to M$
 is defined by
 \[A_L(\gamma) = \int_a^b L(\gamma(t),\dot \gamma(t))\ dt.\]
The  extremal  curves  for the action are given by  solutions of the
{\it Euler-Lagrange equation} which  in local coordinates can be written as:
\begin{equation}
\label{E-L}
\frac{\partial L}{ \partial x}  - \frac{d}{dt }  \frac{\partial L}{\partial v} =0.
\end{equation} 
The  {\it Lagrangian flow}  $ \phi_t^L:TM \to TM $ is defined by $ \phi_t^L( x,v) = (\gamma(t),\dot{\gamma}(t))$ where $ \gamma:\R \to M $ is the solution of {\it Euler-Lagrange equation}, 
with initial conditions   $ \gamma(0)=x$ and $\dot{\gamma}(0)=v.$
The  {\it energy function} $  E_L : TM \rightarrow \R $ is defined by
\begin{equation}
\label{energy function}
 E_{L}(x,v):=\frac{\partial  L}{\partial v} (x,v)\cdot v -L(x,v).
\end{equation}
The  subsets   $ E_L^{-1}(c) \subset TM $  are called {\it energy levels } and they
 are invariant by the Lagrangian flow of $ L$.   
Note that the compactness of $ M$ and the superlinearity of $L$ imply that any non-empty energy levels are compact.
 Therefore the flow $ \phi_t^L$  is defined for all $ t \in \R$. Write 
\begin{equation}
\label{def. e_0}
e_0(L):= \max_{x \in M} E_L(x,0) = - \min_{x \in M} L(x,0).
\end{equation}
 It follows from  the convexity of $ L $,   that 
\begin{equation}
\label{min of E_L in T_xM}
 \min_{v \in T_xM} E(x,v) = E(x,0) \leq  e_0(L). 
 \end{equation}
Thus,  we have that $ \pi( E_L^{-1}(c)) = M $, for all $ c> e_0(L)$.  Observe that, if $ c > e_0$, then the energy level $ E_L^{-1}(c)$
 does not contain any trivial closed orbits, that is fixed points, of $ \phi_t^L$.  

\smallskip

Let $ \omega_0 $ be the canonical symplectic structure in the
cotangent bundle $ T^*M$. 
The theory of Lagrangian flows is closely related to the theory of the Hamiltonian flows on $ (T^*M, \omega_0)$.  
Given a Tonelli Lagrangian $ L:TM \to \R $, the Lagrangian  flow of $ L $ is
conjugated to a Hamiltonian flow in the symplectic manifold $ (T^*M, \omega_0) $ by the
Legendre transformation $ \mathcal L : TM \rightarrow T^*M $,
given by:
\[ \mathcal L (x,v) = \left( x, \Lv (x,v)\right).\]
The corresponding Hamiltonian  $ H: T^*M \rightarrow \R $ is
\[ H(x,p) = \max_{v \in T_xM} \{ p(v)- L(x,v)\}.\]
Conversely, given a smooth, convex and superlinear Hamiltonian $ H: T^*M \to \R$, the corresponding  Lagrangian is 
\[L(x,v) = \max_{p \in T^*_xM} \{ p(v)- H(x,p)\}.\]

\smallskip

Let  $ \m F^r (M) $ be  the   space of smooth functions  
$ u: M \rightarrow \R$, that are called potentials,  endowed with the $C^r$-topology. 
We recall that a subset $ \mathcal O  \subset  \m F^r (M) $ is called  {\it residual} if it contains a countable intersection of open and dense subsets.

We say that a property is {\it generic} (in the sense of Ma\~n\'e) for Tonelli Lagrangians or Hamiltonians, if for each
 $ L:TM\to \R$,  there exists  a residual subset   
$\m O(L)  \subset  \m F^r (M)  $, such that the property holds  for  all $L_u= L -u $, with  $ u \in  \m O(L)$.
In the Hamiltonian setting,  it corresponds to considering perturbations of the kind $H_u=H+u$. 
 This  generically concept  was introduced by R. Ma\~n\'e in \cite{Mane:1996a} to study
the Aubry-Mather theory (see  \cite{Mather:1991}) for generic Tonelli Lagrangians, and  
after this, 
many works were devoted to studying the generic properties of Tonelli Lagrangians and 
Hamiltonians  in the sense  of Ma\~n\'e; see for instance \cite{Contreras_Iturriaga:1999,Contreras_Paternain:2002a,Bernard_Contreras:2008}. 

Also in this context, E. Oliveira proved  in \cite{Oliveira:2008} a version of the
Kupka-Smale Theorem for  Hamiltonians on surfaces. 
In \cite{RR:2011}, L. Rifford and R. Ruggiero proved a local perturbation theorem of 
the $1$-jet of the Poincar\'e map of a closed orbit for  Hamiltonians on  
 closed manifold of any dimension and, using this perturbation theorem and 
Oliveira's results, they obtained   the Kupka-Smale Theorem for Tonelli 
Hamiltonians and Lagrangians on closed manifolds of any dimension. More precisely, they proved that,  for each $ c\in \R $, there exists a residual set 
$\m{KS} = \m{KS} (c) \subset \m F^r(M) $ such that every  Hamiltonian  
$ H_u=H + u $, with  $ u \in  \m{KS}$, if $H_u^{-1}(c)$ is nonempty, then
\begin{enumerate}
\item $H_u^{-1}(c)$ is a regular energy level,
\item
all closed orbits in $H_u^{-1}(c)$ are non-degenerate, and
\item
all heteroclinic intersections in $H_u^{-1}(c)$ are transverse.
\end{enumerate}
\bigskip

In this paper we study the behavior of the functional $   u \mapsto h_{top}(L-u,c) $  for a prescribed energy level $ c>e_0(L)$, where  $ h_{top}(L-u,c)$  denotes the topological entropy of  $\left. \phi_t^u\right|_{ E_{L-u}^{-1}(c)}$.   
The {\it topological entropy}, is an invariant that, roughly speaking, measures its orbit structure complexity.  The relevant question about topological entropy is whether it is positive or vanishes.  Namely, if $\theta\in E_L^{-1}(c)$ and $T,\delta>0$, we define the  $(\delta,T)-${\it dynamical ball} centered at  $\theta$ as 
\[B(\theta, \delta, T)=\{v\in E_L^{-1}(c)\,:\,d(\phi_t(v),\phi_t(\theta))<\delta\mbox{ for all }t\in[0,T]\},\]
 where $d$ is the distance function in $E_L^{-1}(c)$. 
 Let $N_{\delta}(T)$ be the minimal quantity of $(\delta,T)-${\it dynamical ball} needed to cover $E_L^{-1}(c)$. 
 The topological entropy is the limit  $\delta\to 0$ of the exponential growth rate of  $N_\delta(T)$, that is \[h_{top}(L,c):=\displaystyle\lim_{\delta\to 0}\limsup_{T\to\infty}\frac{1}{T}\log N_{\delta}(T).\] Thus, if $h_{top}(L,c)>0$, some dynamical balls must contract exponentially at least in one direction.
\bigskip

In \cite{Carballo_Miranda:2013},
 C. Carballo and J. A. Miranda proved  a perturbation theorem for the $k$-jets, $k\geq 2$, of the Poincar\'e map of 
a non-trivial  closed orbit of the Hamiltonian flow of a Tonelli Hamiltonian $H: T^*M\to \R$
on a closed manifold $M$. As a consequence, they showed that 
there exists a residual subset $\m R= \m R(c) \subset \m F^r (M) $ ($r \geq 4$)  such that, for every $u\in\m R $,
any closed orbit of $H_u$ in $H_u^{-1}(c)$ is either hyperbolic or weakly monotonous 
quasi-elliptic (see \cite[Corollary 5]{Carballo_Miranda:2013}). 
Recall that a non-trivial  closed orbit $\theta$ in $H^{-1}(c)$ is {\it $q$-elliptic orbit}
if the derivative of its Poincar\'e map $P$ has exactly $2q$ eigenvalues 
of modulus $1$, 
and none of them is a  root of 1, 
and 
it is {\it quasi-elliptic} if it is $q$-elliptic for some $q>0$. 
If $\theta$ is a $q$-elliptic orbit, then the central manifold $W^c$ of the 
Poincar\'e map $P:\Sigma \subset H^{-1}(c) \to \Sigma$ of $\theta$ has dimension $2q$, 
$\omega|_{W^c}$ is non-degenerate and $P|_{W^c}$ is a symplectic map on a
sufficiently small neighborhood of $\theta$.
The concept of weakly monotonous is analogous to the twist condition for surface. It implies that the map $P|_{W^c}$, on a
 neighborhood of $\theta$, is conjugated to a symplectic twist map on $\mathbb{T}^q \times \R^q$. The existence of a quasi-elliptic orbit that satisfies the weakly monotonous property has important dynamical consequences, such as the existence of a non-trivial hyperbolic set and then, infinitely many closed orbits and positive topological entropy on the corresponding energy level.  We refer to \cite[Section 4]{Contreras:2010} for details. 

\smallskip

Then we have:
\begin{prop}
\label{energy with a non-hyperbolic orbit}
 Let $L: TM\to  \R$ be a smooth Tonelli Lagrangian. We suppose that the Lagrangian flow has a non-trivial closed orbit with energy $ c $ and that this orbit is not hyperbolic. 
 Then there is a smooth potential $ u:M \to \R $ of $ C^r$-norm arbitrarily small (with $4 \leq r$ ), such that   $ h_{top}(L-u,c)>0 $.
\end{prop}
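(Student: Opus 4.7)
My plan is to use the perturbation theorems available for the jets of the Poincar\'e map of a closed orbit in order to transform the given non-hyperbolic orbit into a weakly monotonous quasi-elliptic one, and then invoke the known result that such orbits force positive topological entropy on the ambient energy level.

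First, let $\theta\subset E_L^{-1}(c)$ be the given non-hyperbolic closed orbit with period $T$, and let $P:\Sigma\to\Sigma$ denote its Poincar\'e map on a local transversal section $\Sigma\subset E_L^{-1}(c)$. Since $\theta$ is non-hyperbolic, $dP(\theta)$ has at least one eigenvalue on the unit circle; moreover, because $P$ is symplectic with respect to the restriction of the canonical symplectic form to $\Sigma$, the eigenvalues come in pairs $(\lambda,\bar\lambda)$ with $|\lambda|=1$. I would first apply the Lagrangian analog of Franks' Lemma (the $1$-jet perturbation theorem, such as the one of Rifford--Ruggiero cited in the excerpt) to realize, by a potential $u_1$ with arbitrarily small $C^r$-norm, a prescribed small symplectic modification of $dP(\theta)$. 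Using this freedom I can, if necessary, move the non-hyperbolic eigenvalues so as to avoid roots of unity and so that the surviving non-hyperbolic block of $dP(\theta)$ becomes $q$-elliptic for some $q\ge 1$, making the perturbed orbit quasi-elliptic while remaining on the energy level $c$ (the level $c$ itself does not move because $u_1$ can be chosen to vanish to sufficiently high order along a short arc of $\theta$, or one renormalizes by a constant).

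Next, I would apply the $k$-jet perturbation theorem of Carballo--Miranda (with $k\ge 2$) recalled in the text to adjust higher-order terms of $P$ in a small neighborhood of $\theta$ by a further potential $u_2$ of arbitrarily small $C^r$-norm ($r\ge 4$). The point is that, by that theorem, the residual set $\mathcal R(c)$ provides precisely that any non-hyperbolic closed orbit of the perturbed flow is weakly monotonous quasi-elliptic; more concretely, the perturbation can be chosen so that the normal form of $P\big|_{W^c}$ on the central manifold satisfies the weakly monotonous (twist-like) condition on $\T^q\times\R^q$.

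Finally, I would invoke the consequence of the weakly monotonous quasi-elliptic condition that is recalled in the paragraph preceding the proposition and detailed in \cite[Section 4]{Contreras:2010}: the symplectic twist behavior of $P\big|_{W^c}$ forces the existence of a non-trivial transverse homoclinic intersection, hence a non-trivial horseshoe, for the Poincar\'e map of the perturbed orbit. Since a horseshoe for $P$ produces a suspended hyperbolic set of positive topological entropy for the time-$t$ map of $\phi_t^{L-u}$ on the invariant level $E_{L-u}^{-1}(c)$, we obtain $h_{top}(L-u,c)>0$ with $u=u_1+u_2$ of arbitrarily small $C^r$-norm.

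The main obstacle is the second step: one must simultaneously (i) realize the desired symplectic perturbation of the higher jets of $P$ by a potential supported near a short arc of $\theta$, (ii) keep the perturbed orbit in the prescribed energy level $c$, and (iii) ensure that after adjusting the eigenvalues in the first step one still has enough room in the jet space to fulfill the weakly monotonous (twist) condition. This is exactly the content of the Lagrangian/Hamiltonian jet-perturbation machinery of \cite{RR:2011} and \cite{Carballo_Miranda:2013}; once their statements are invoked the remainder of the argument is essentially a matter of assembling the symplectic linear algebra of $dP(\theta)$ with the normal-form analysis on the center manifold.
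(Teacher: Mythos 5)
Your proposal is essentially the paper's proof: both target the weakly monotonous $q$-elliptic normal form via jet perturbation and then read off positive entropy from the twist dynamics on the central manifold via \cite[Sec.~4]{Contreras:2010}. The one real difference is organizational, and worth noting: the paper collapses your two perturbation steps into a single application of the Carballo--Miranda jet theorem by observing that the set $Q_0\subset J^3_s(n)$ of jets for which the origin is a weakly monotonous $4$-elementary $q$-elliptic fixed point (for some $q>0$) is \emph{open and invariant} in the symplectic jet group, and that the $3$-jet of the Poincar\'e map of any non-hyperbolic closed orbit lies in $\overline{Q_0}$. This packages together the linear algebra you do by hand in your first step, and it sidesteps a point your plan leaves implicit: when you perturb $d_\theta P$ to ``move the non-hyperbolic eigenvalues,'' you must show one can do so while \emph{keeping} at least one conjugate pair on the unit circle (a generic perturbation could push all eigenvalues off the circle and make $\theta$ hyperbolic, losing the whole construction); the $\overline{Q_0}$ formulation makes this automatic. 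Your final step also compresses a chain the paper makes explicit — weak monotonicity yields, after Moser's theorem, an iterate of $P|_{W^c}$ that is $C^1$-near a completely integrable twist map; \cite[Thm.~4.1]{Contreras:2010} then produces a $1$-elliptic periodic point; its two-dimensional center carries a twist map with a homoclinic orbit; normal hyperbolicity of the center manifold and the Kupka--Smale condition promote this to a transverse homoclinic for the full Poincar\'e map, hence a horseshoe. Those intermediate steps would need to appear in a complete write-up, but the overall architecture of your plan matches the paper's.
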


\smallskip

So, we need to study the situation where 
all closed orbits of the flow $ \left. \phi_t^{L}\right|_{ E_{L}^{-1}(c)} $ are  hyperbolic   and  this situation persist by   perturbations 
of the kind $ L-u$ of $L$, for u in a  neighborhood $ \m U \subset \m F^r(M)$.
\smallskip

We recall that   a compact
 $\phi^L_t$-invariant subset $\Lambda\subset E_{L}^{-1}(c)$  is a {\it hyperbolic 
set} if the restriction of the tangent bundle of $E_{L}^{-1}(c)$ to $\Lambda$ has a splitting
\[ T_\Lambda E_{L}^{-1}(c)= E^s\oplus \langle X^L\rangle\oplus E^u, \]
where $\langle X^L\rangle$ is the subspace generated by the
vector field $X^L$ of $\phi^L_t$, $E^s$ and $E^u$ are
$d\phi^L_t$ invariant sub-bundles and there are constants 
$C,\,\lambda>0$ such that 
\renewcommand{\theenumi}{\roman{enumi}}
\begin{enumerate}
\item\quad $\left|d\phi^L_t(\xi)\right| \leq C\,e^{-\lambda t}\,\left|\xi\right|$ for all
$t>0$, $\xi\in E^s$;
\item\quad $\left| d\phi^L_{-t}(\xi)\right|\leq C\,e^{-\lambda t}\,\left|\xi\right|$ for all
$t>0$, $\xi\in E^u$.
\end{enumerate}
A hyperbolic set $ \Lambda$ is said to be 
 {\it locally maximal }  
 if  there is a neighborhood $U$ of $\Lambda$ such  that  \[  \Lambda=\bigcap_{t \in\R}\phi^L_t(U).  \]  
A {\it non-trivial  hyperbolic basic set} is a 
locally maximal hyperbolic subset $\Lambda\subset E_{L}^{-1}(c)$ which has a dense orbit and is not a single closed orbit.  
 A way of obtaining positive topological entropy is by showing that the flow has a non-trivial  
hyperbolic basic set  \cite{Bowen:1972,Bowen:73} (the topological properties of hyperbolic sets for diffeomorphisms can be seen in \cite[\S~18] {Katok_Hasselblatt:1995}). 

\smallskip

Let us to denote by  $ \m H(L,c)  $ the subset of potentials  $u:M\to \R$ such that all closed orbits of $ \phi_t^{L-u} $ in $E_{L-u}^{-1}(c) $ are hyperbolic and we denote  by  $ \mbox{\rm int}_{C^2} \m H(L,c) $   the interior of  $ \m H(L,c) $ in the $C^2$-topology.
 For $ u \in  \mbox{\rm int}_{C^2} \m H(L,c) $,  let  $Per(L,u,c) $ be the union of all closer orbits of 
 $ \left.\phi_t^{L-u} \right|_{  E_{L-u}^{-1}(c)}$. By definition, $\overline{Per(L,u,c)} \subset E_{L-u}^{-1}(c) $ is a compact and invariant subset.

\smallskip

\smallskip

The main result of this work is the following theorem: 

\begin{thm}\label{T_maim-u} Let  $\dim(M)=n+1$ and let  $L:TM\rightarrow \R$ be a  Tonelli Lagrangian.
   There exists an open and dense  subset $\m G(n,L,c) \subseteq \m F^2(M)$, with   $\m G(1,L,c) = \m F^2(M)$, such
 that, if $u \in \mbox{\rm int}_{C^2} \m H(L,c)\cap\m G(n,L,c)$ and $ c> e_0(L-u)$,
 then  $\overline{Per(L,u,c)}$ is a hyperbolic set.
\end{thm}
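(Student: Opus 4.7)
My approach follows Ma\~n\'e's classical strategy adapted to Lagrangian flows: exploit the $C^2$-robust absence of non-hyperbolic periodic orbits, via a Franks-type perturbation lemma for the potential $u$, to force \emph{uniform} hyperbolicity along closed orbits; propagate this information to a dominated splitting on the closure $\overline{Per(L,u,c)}$; and finally upgrade domination to hyperbolicity using the symplectic structure of the transversal linearized flow. The condition $c>e_0(L-u)$ ensures the energy level is a regular compact submanifold without rest points, so the transversal Poincar\'e maps along closed orbits are well defined.

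The first step is to show uniform hyperbolicity of the Poincar\'e maps along orbits of $Per(L,u,c)$: there exist $C,\lambda>0$, independent of the orbit, such that the splitting $E^s\oplus E^u$ satisfies the estimates (i)--(ii) with these constants. If this failed, one could extract a sequence of closed orbits whose transversal Poincar\'e maps approach a non-hyperbolic symplectic matrix; then the Franks-type lemma for Lagrangian flows announced earlier, which allows arbitrary small perturbations of the $1$-jet of the Poincar\'e map of a closed orbit to be realized by a $C^2$-small perturbation of the potential, would produce a $u'$ arbitrarily $C^2$-close to $u$ whose flow on $E_{L-u'}^{-1}(c)$ carries a non-hyperbolic closed orbit, contradicting $u\in\mbox{int}_{C^2}\m H(L,c)$ together with Proposition~\ref{energy with a non-hyperbolic orbit}.

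The second step promotes uniform hyperbolicity of closed orbits to a dominated splitting on $\overline{Per(L,u,c)}$. Again one argues by contradiction: absence of domination produces a sequence of periodic orbits containing long segments along which the stable and unstable subspaces form angles very small relative to the expansion/contraction ratio. A carefully supported composition of $C^2$-small localized potential perturbations is then used to \emph{rotate} these subspaces; after a shadowing step this yields a closed orbit of some nearby $L-u'$ whose Poincar\'e map has an eigenvalue on the unit circle, once more contradicting $u\in\mbox{int}_{C^2}\m H(L,c)$. The open-dense set $\m G(n,L,c)$ is precisely what guarantees the genericity hypotheses (Kupka--Smale transversality of heteroclinic intersections and the generic $k$-jet conditions of \cite{RR:2011,Carballo_Miranda:2013}) needed to carry out the rotation-and-shadowing step without spoiling hyperbolicity of the remaining closed orbits. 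When $n+1=2$ the bundles $E^s$ and $E^u$ are one-dimensional, so there is nothing to rotate and the argument is empty; this explains $\m G(1,L,c)=\m F^2(M)$.

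Once the dominated splitting $T_{\overline{Per(L,u,c)}}E_{L-u}^{-1}(c)=E\oplus\langle X^{L-u}\rangle\oplus F$ with $\dim E=\dim F=n$ is established, the symplectic structure finishes the proof: the canonical symplectic form, pulled back to $TM$ via the Legendre transform, restricts to a non-degenerate pairing between $E$ and $F$ along the energy level, and combined with domination this forces $E$ to be uniformly contracted and $F$ uniformly expanded by $d\phi_t^{L-u}$, yielding hyperbolicity of $\overline{Per(L,u,c)}$. The main obstacle is the second step, namely producing a single $C^2$-small potential perturbation that simultaneously effects the required rotation of the Poincar\'e map along a long orbit segment while preserving the hyperbolic structure of all other closed orbits; this is where the Franks' lemma for Lagrangians must be applied with the utmost care and where the technical weight of the generic hypothesis $\m G(n,L,c)$ lies.
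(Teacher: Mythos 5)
Your overall strategy (Franks-type lemma $+$ robustness of hyperbolicity $+$ Ma\~n\'e's mechanism to pass from stable to uniform hyperbolicity) is in the right spirit, but it diverges from the paper at the key technical step and, more seriously, mischaracterizes what the hypothesis $\m G(n,L,c)$ is for.

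The paper's actual route is shorter and cleaner than your rotation-and-shadowing outline. For each closed orbit $\alpha\subset Per(L,u,c,U)$ it cuts $\alpha$ into segments of length in $(k_0,2k_0]$ and records the linearized transition Poincar\'e maps $\xi^\alpha_i\in Sp(n)$, thereby obtaining a family $\xi(u)$ of bounded periodic symplectic sequences. Lemma~\ref{linear h-stability} shows this family is \emph{stably hyperbolic}: if not, one could slightly modify the individual matrices $\xi^\alpha_i$ to produce a non-hyperbolic product, and the Franks' Lemma (Theorem~\ref{Franks lemma}) then realizes each modified matrix by a $C^2$-small potential supported near a single orbit segment, producing a non-hyperbolic closed orbit of $\phi^{L-\tilde u}_t$ with $\tilde u\in\m U$, contradicting $u\in\operatorname{int}_{C^2}\m H(L,c)$. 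The passage from stable hyperbolicity to \emph{uniform} hyperbolicity is then handled entirely by the purely linear-algebraic Theorem~\ref{Contreras2010_T8.1} (Contreras 2010, Thm.~8.1): a stably hyperbolic family of periodic sequences of bounded symplectic linear maps is uniformly hyperbolic. The resulting uniform estimates on $\xi(u)$ yield a continuous hyperbolic splitting over $Per(L,u,c)$ which then extends to $\overline{Per(L,u,c)}$ by continuity. There is no need for a rotation-and-shadowing construction on the dynamical system itself, nor for a separate domination-plus-symplectic-pairing step; all of that is subsumed inside the cited theorem about symplectic matrix sequences. Your proposed second step is genuinely different, and substantially harder: it would require producing a single $C^2$-small potential realizing a prescribed perturbation along a long orbit segment while simultaneously controlling all other closed orbits, which is exactly the issue that the abstract matrix-sequence formulation is designed to avoid.

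The description of $\m G(n,L,c)$ is a genuine error. $\m G(n,L,c)=\{u:\Phi_n(H+u,c)>0\}$, and $\Phi_n$ measures, via $h_n$, the spread of the eigenvalues of the curvature matrix $K(\theta(t))$ from Lemma~\ref{coordinates}. It is \emph{not} a Kupka--Smale or $k$-jet transversality condition. Its role is entirely inside the proof of the Franks' Lemma: the eigenvalue gap bound $k_4$ in~\eqref{k_4} is what allows one to solve $Kd-dK=(\beta-\beta^*)/2$ with a controlled $\|d\|$ (Lemma~\ref{Lemma of subjectivity}), hence to obtain a radius $\delta$ that is \emph{uniform} over $\theta\in E_L^{-1}(c)$ and $t_0\in[k_0,2k_0]$. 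Without this uniformity, the stably-hyperbolic contradiction argument in Lemma~\ref{linear h-stability} breaks, because one must perturb each segment of every periodic orbit by potentials landing in a single fixed $C^2$-ball. The equality $\m G(1,L,c)=\m F^2(M)$ holds simply because $h_1\equiv 1$ (equivalently, $\m{AS}(1)=\{0\}$, so there is no equation $Kd-dK=e$ to solve and no eigenvalue-gap hypothesis is needed); it has nothing to do with one-dimensional stable and unstable bundles being ``unrotatable.''
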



    The proof requires the use of a suitable version of Franks' lemma, which was originally proved for diffeomorphisms in \cite{Franks:1971}, by J. Franks. This result is an important tool to study $C^1$-stable properties of a dynamical system. It was used, for instance, by R. Ma\~n\'e in his proof of the structural stability conjecture \cite{Mane:1982}.



In the  case  of geodesic flows, 
in \cite{Contreras_Paternain:2002} G. Contreras and G. Paternain proved a version of the Franks' Lemma for geodesic flows on closed surface,  by performing  $ C^2$-perturbations of the Riemannian metric. 
 In \cite{Contreras:2010} G. Contreras extends    
the proof for geodesic flows 
on closed manifolds of any dimension, again by performing $C^2$-perturbations of the Riemannian metric. 
Also, in \cite{Lazrag_Rifford_Ruggiero:2016}, A. Lazrag, L. Rifford, and R. Ruggiero proved another version of the Franks' Lemma for geodesic flows on a closed manifold of any dimension by performing conformal $C^2$-perturbations of the Riemannian metric; this proof relies on more general techniques from geometric control theory.
By Maupertuis' principle, a small perturbation of a geodesic flow given by a conformal  perturbation of the metric $ g$ is equivalent
to a small perturbation in the Ma\~n\' e sense of the Lagrangian flow, for the Riemannian Lagrangian  $L(x,v)=1/2 g(v,v)$, restricted to the level of energy equal to 1.


We give a proof of a version of Franks' lemma for Tonelli Lagrangians by performing  $ C^2$-perturbations in the Ma\~n\'e's sense, 
that is,  $ C^2$-perturbations  of $ L $ by adding potentials. Note that this kind of $C^2$-perturbations of $L$ is equivalent to a particular  $C^1$-perturbations of the Lagrangian flow $ \phi_t^L$.  
Our proof follows the ideas in \cite{Contreras:2010}  combined with some others from \cite{RR:2011}.
See the details and statements in section~\ref{franksL}.



\smallskip

In Theorem ~\ref{T_maim-u}, if we ensure that the set $\overline{Per(L,u,c)}$ is not a finite union of closed orbits, we can apply standard arguments in dynamical systems (\cite[\S~6] {Katok_Hasselblatt:1995}) and Smale's spectral decomposition theorem for hyperbolic sets to prove the existence of a non-trivial hyperbolic basic set contained in $\overline{Per(L,u,c)}$. This leads us to the following corollary.

\begin{cor}
\label{cor_manyperiodic}
Let  $u \in \mbox{\rm int}_{C^2} \m H(L,c)\cap\m G(n,L,c)$ and $ c> e_0(L-u)$. We suppose that the number of closed orbits of the flow 
$ \left.\phi_t^{L-u}\right|_{E_{L-c}^{-1}(c)} $ is infinite.
Then    $\overline{Per(L,u,c)}$ has a non-trivial hyperbolic basic set.
In particular $ h_{top}(L-u,c) >0$.

\end{cor}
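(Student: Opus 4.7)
The plan is to set $\Lambda := \overline{Per(L,u,c)}$ and to use Theorem \ref{T_maim-u} to promote $\Lambda$ to a compact, $\phi_t^{L-u}$-invariant hyperbolic subset of $E_{L-u}^{-1}(c)$. Once this is in hand, the strategy is to split $\Lambda$ into finitely many basic pieces via Smale's spectral decomposition, to pigeonhole to find a piece containing infinitely many closed orbits, and to conclude positive entropy from a classical theorem of Bowen \cite{Bowen:1972,Bowen:73}.

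In more detail, Theorem \ref{T_maim-u} gives the hyperbolicity of $\Lambda$, and the Anosov closing lemma (valid on any compact hyperbolic invariant set) implies that periodic orbits of $\phi_t^{L-u}|_\Lambda$ are dense in the non-wandering set $\Omega(\phi_t^{L-u}|_\Lambda)$; combined with the definition of $\Lambda$ this yields $\Omega(\phi_t^{L-u}|_\Lambda) = \Lambda$. The hypothesis $c > e_0(L-u)$ rules out fixed points of the flow in $E_{L-u}^{-1}(c)$, so $\phi_t^{L-u}$ is nonsingular on $\Lambda$. An application of Smale's spectral decomposition theorem (see \cite[\S~18]{Katok_Hasselblatt:1995}) then produces a finite partition
\[
\Lambda \;=\; \Lambda_1 \,\sqcup\, \Lambda_2 \,\sqcup\, \cdots \,\sqcup\, \Lambda_k
\]
into pairwise disjoint compact invariant topologically transitive basic sets, each one with periodic orbits dense in it.

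Since by hypothesis there are infinitely many closed orbits of $\phi_t^{L-u}|_{E_{L-u}^{-1}(c)}$ and all of them lie in $\Lambda$, a pigeonhole argument produces some index $i_0$ such that $\Lambda_{i_0}$ contains infinitely many closed orbits. This $\Lambda_{i_0}$ is therefore not reduced to a single closed orbit, hence is a non-trivial hyperbolic basic set inside $\overline{Per(L,u,c)}$. Bowen's theorem \cite{Bowen:1972,Bowen:73} then gives $h_{top}(\phi_t^{L-u}|_{\Lambda_{i_0}}) > 0$, and monotonicity of topological entropy under passage to invariant subsystems yields $h_{top}(L-u,c) > 0$.

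The main subtle point I foresee is the applicability of Smale's spectral decomposition to $\Lambda$: the theorem is customarily stated for locally maximal hyperbolic sets, whereas $\Lambda$ is a priori only known to be hyperbolic and invariant. However, since $\Omega(\phi_t^{L-u}|_\Lambda) = \Lambda$ and the flow is nonsingular on $\Lambda$, one obtains the required local product structure either by constructing an isolating neighborhood using the local stable/unstable manifolds of periodic orbits together with the shadowing lemma, or by embedding $\Lambda$ into a locally maximal hyperbolic extension and intersecting the resulting decomposition with $\Lambda$. This is the only step requiring hyperbolic machinery; the remainder of the argument is purely combinatorial.
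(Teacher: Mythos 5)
Your proposal follows essentially the same route as the paper: hyperbolicity of $\overline{Per(L,u,c)}$ from Theorem~\ref{T_maim-u}, local maximality by standard hyperbolic arguments, Smale's spectral decomposition, a pigeonhole step to locate a basic piece containing infinitely many closed orbits, and Bowen's theorem for positive entropy. The only difference is one of exposition: you spell out the local-maximality and $\Omega = \Lambda$ steps that the paper dispatches with a citation to \cite{Katok_Hasselblatt:1995}, correctly flagging that since $\Lambda$ is the closure of \emph{all} periodic orbits in the energy level, the periodic points produced by the closing/shadowing lemmas automatically land back in $\Lambda$, which is precisely what makes the standard argument go through.
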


\bigskip

 Let us now  recall the definition of the  Ma\~n\'e's critical value
(cf. \cite{Contreras_Delgado_Iturriaga:1997,Mane:1996,Paternain_Paternain:1997}). 
The {\it critical value}  of $ L $ is the real number $ c(L) $ such that  
\begin{equation}
\label{c(L)}
 c(L)=\inf\{k\in\mathbb{R}:\,A_{L+k}(\gamma)\geq 0, \ \forall \mbox{  closed curve } 
 \gamma:[0,T]\to  M \}.
 \end{equation} 
Let $ p: \tilde{M}\rightarrow M $ be a covering of $ M $ and $ \tilde{L}:T\tilde{M}
\rightarrow \R $ be the lift  of  $ L $ to $ \tilde{M} $, i.e $ \tilde L = L \circ dp$. Then, 
we obtain the critical value $ c( \tilde L ) $ for $ \tilde L $. 
We define the {\it universal critical value} $ c_u(L)$   as the critical value for the 
lift of $ L $ to universal covering  $ p:\tilde{M}_u \to M$. By  (\ref{def. e_0}) and 
(\ref{c(L)}),  we have   that
\[e_0(L) \leq c_u(L)  \leq c(L).\] 

In \cite[Theorem 27]{Contreras_Iturriaga_Paternain_Paternain:2000},  G. Contreras, 
R. Iturriaga, G. P. Paternain, and  M. Paternain showed  that for any  $c>c_u(L)$  
there exists a closed orbit with energy $c$     in any non-zero homotopy class.  Then,
the existence of infinitely many closed orbits with energy  $c>c_u(L)$ can be deduced 
from further properties of the fundamental group of the closed manifold $ M$. 

It is well known that if $ c > c(L) $,  the restriction of the  Lagrangian flow in the   
energy level $ E_L^{-1}(c) $ is a reparametrization of a geodesic flow in the unit
tangent bundle for an appropriate   Finsler metric on $ M$  (cf. \cite[Corollary2]
{Contreras_Iturriaga_Paternain_Paternain:1998a}). Recall that a  {\it  Finsler metric} 
is a continuous function $ F: TM\rightarrow \R $ that is smooth outside the zero section, 
its second derivative in the direction of the fibers is positive defined, and $ F(x,\lambda\ v ) = \lambda F( x,v ) $ for all $ \lambda> 0 $ and $ (x,v)\in TM $.  
If $g $ is a  Riemannian metric  on $ M$, then $ F(x,v) =
g_x(v,v)^{1/2} $ is a trivial  example of a  Finsler metric. We
say that  a Riemannian or  Finsler metric is {\it bumpy } if  all closed
geodesics are {\it non-degenerate}, this is, if the linearized
Poincar\'e map of every closed geodesic does not admit a root of
unity as an eigenvalue.

In the setting of bumpy geodesic flows, 
Rademacher proved in \cite[theorem 3.1(b)]{Rademacher:1989} that if the Riemanniam manifold is  closed, simply-connected manifold, and its rational cohomology ring $
H^*(M,\mathbb{Q})$ is monogenic, then either there are infinitely many closed geodesics,  or there is at least one non-hyperbolic closed geodesic.  
The Rademacher's theorem remains valid for bumpy Finsler metrics on $ S^2$ (cf. \cite[ Section 4 ]{Rademacher:1989}). 
Other stronger results of Franks \cite{Franks:92} and Bangert \cite{Bangert:93} assert that any Riemannian metric on $ S^2$ has infinitely many geometrically distinct closed geodesics.
Many results for geodesic flows
of a Riemannian metric  remain valid  for Finsler metrics but, in
contrast with the  Riemannian case,  there exist examples of bumpy
Finsler metrics on $ S^2$ with only two closed geodesics. These
examples were given by  Katok in \cite{Katok:1973} and were studied
geometrically by Ziller in \cite{Ziller:1983}.

In the low dimensional  case, this is, for a Tonelli Lagrangian $L: TM \to \R$ on a closed surface  ($\dim(M)=2$),  in
 \cite{Asselle_Mazzucchelli:2019}, 
L. Asselle and M.Mazzucchelli showed that for Lebesgue-almost every $c\in 
 (e_0(L),c_u(L))$, the Lagrangian  flow $
\phi_t|_{E_L^{-1}(c)}$ has  infinitely many closed orbits with energy $c$.

\smallskip

So, we combine the above results and obtain:

\begin{thm}\label{thm_surface}
Let $M$ be a closed surface. Let   $L:TM\rightarrow \R$ be a  Tonelli Lagrangian and  $c> e_0(L)$. Then, there is  a smooth potential $u: M \to \R$, that is arbitrarily  $C^2$-small, such that $ h_{top}(L-u,c)>0.$

\end{thm}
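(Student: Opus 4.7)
I argue by contradiction: assume there is a $C^2$-neighborhood $\mathcal{U}$ of $0\in\mathcal{F}^2(M)$ such that $h_{top}(L-u,c)=0$ for every $u\in\mathcal{U}$. By Proposition \ref{energy with a non-hyperbolic orbit}, after shrinking $\mathcal{U}$ we may assume no $L-u$ with $u\in\mathcal{U}$ possesses a non-hyperbolic closed orbit at energy $c$, for otherwise an additional small potential would produce positive entropy. Hence $\mathcal{U}\subseteq\mathcal{H}(L,c)$, which places $0\in\mbox{\rm int}_{C^2}\mathcal{H}(L,c)$. Since $\dim M=2$ gives $n=1$ and $\mathcal{G}(1,L,c)=\mathcal{F}^2(M)$, Theorem \ref{T_maim-u} applies and $\overline{Per(L,0,c)}$ is a hyperbolic set. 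If it contained infinitely many closed orbits, Corollary \ref{cor_manyperiodic} would force $h_{top}(L,c)>0$, a contradiction; consequently $\phi_t^L$ has only finitely many closed orbits in $E_L^{-1}(c)$.

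The plan is to contradict this via a small constant shift. For any $a\in\mathbb{R}$, the Lagrangians $L$ and $L+a$ share the same Euler--Lagrange flow while $E_{L+a}=E_L-a$, so $\phi_t^{L+a}\big|_{E_{L+a}^{-1}(c)}=\phi_t^L\big|_{E_L^{-1}(c+a)}$. Taking $u_a\equiv -a$, a smooth constant potential of arbitrarily small $C^2$-norm, it suffices to find $a$ as small as we like such that $\phi_t^L\big|_{E_L^{-1}(c+a)}$ admits infinitely many closed orbits. Indeed, for such an $a$ one has $u_a\in\mathcal{U}\subseteq\mbox{\rm int}_{C^2}\mathcal{H}(L,c)$, so every closed orbit of $L-u_a=L+a$ at energy $c$ is hyperbolic; Theorem \ref{T_maim-u} then makes $\overline{Per(L,u_a,c)}$ a hyperbolic set containing infinitely many closed orbits, and Corollary \ref{cor_manyperiodic} yields $h_{top}(L-u_a,c)>0$, contradicting the initial assumption.

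To produce such an $a$, I split into cases according to the position of $c$ with respect to $c_u(L)$ and the topology of $M$. When $c<c_u(L)$, the Asselle--Mazzucchelli theorem exhibits a full Lebesgue-measure subset of $(e_0(L),c_u(L))$ of energies admitting infinitely many closed orbits, and any small $a$ landing $c+a$ in this subset works. When $c\geq c_u(L)$ and $\pi_1(M)$ is infinite (namely $M\notin\{S^2,\mathbb{RP}^2\}$ among closed surfaces), a small $a>0$ puts $c+a>c_u(L)$, and the Contreras--Iturriaga--Paternain--Paternain theorem gives a closed orbit in every nontrivial free homotopy class, hence infinitely many. When $M\in\{S^2,\mathbb{RP}^2\}$ and $c\geq c_u(L)$, we choose small $a>0$ with $c+a>c_u(L)=c(\tilde L)$, lift to the universal cover $\tilde M\cong S^2$, and observe that $\phi_t^{\tilde L}$ on this energy level reparametrizes as a Finsler geodesic flow on $S^2$. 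All its closed orbits remain hyperbolic (hyperbolicity is inherited from the quotient flow), so the Finsler metric is bumpy. Rademacher's theorem for bumpy Finsler metrics on $S^2$ then forces either infinitely many closed geodesics or at least one non-hyperbolic one; the latter is excluded, so we obtain infinitely many, which project to infinitely many closed orbits downstairs at energy $c+a$.

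The principal obstacle is the third case, especially for $M=\mathbb{RP}^2$: one must verify that lifting preserves bumpiness and that infinitely many closed geodesics on $S^2$ descend to infinitely many distinct closed orbits on $\mathbb{RP}^2$ (the projection being at worst two-to-one). Confirming that the shift $a$ stays inside $\mathcal{U}$ uses the continuity of $e_0(L-u)$ and $c_u(L-u)$ under $C^2$-perturbations together with the openness of the ``good'' set of energies in each case. Assembling these case-by-case arguments yields the required contradiction and completes the proof.
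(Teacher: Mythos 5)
Your proof is correct and relies on the same toolkit as the paper --- the constant-potential energy shift, the Asselle--Mazzucchelli theorem for the low-energy window, the Contreras--Iturriaga--Paternain--Paternain existence theorem for surfaces with infinite fundamental group, and Rademacher's theorem via the Finsler reparametrization for the sphere --- so it is essentially the paper's approach, repackaged as one global argument by contradiction instead of the two direct statements Propositions \ref{case hight energy} and \ref{case low energy}. Establishing $0\in\mbox{\rm int}_{C^2}\mathcal{H}(L,c)$ once at the outset and then deploying the constant shift $u_a\equiv -a$ uniformly in every regime is a cleaner organization. One place where your write-up is in fact more careful than the source: you handle $\mathbb{RP}^2$ explicitly alongside $S^2$ by lifting to the universal cover. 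The paper instead channels all $M\neq S^2$ through Lemma \ref{prop infinite closed orbit for high energy}, whose proof invokes two torsion-free generators of $\pi_1(M)$; since $\pi_1(\mathbb{RP}^2)\cong\mathbb{Z}/2$ has no such generators, that argument does not cover the real projective plane. Your observation --- that at an energy just above $c_u(L)$ every closed orbit of the quotient flow lifts to a hyperbolic closed geodesic of a Finsler metric on $S^2$, which is therefore bumpy so Rademacher applies upstairs, and the resulting infinitely many closed geodesics descend at worst two-to-one --- supplies exactly the step the paper omits.
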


We will give the details of the proof of this theorem in the section~\ref{proof surface case}.

\section{Frank's Lemma for Tonelli Lagrangian}\label{franksL}

Let $M$ be a closed smooth manifold of dimension $n+1\geq 2$ and   
 $L:TM\rightarrow\mathbb{R}$ be a Tonelli Lagrangian.
 
Given   an orbit segment 
  $ \phi^L_{[0,t_0]}(\theta) $ contained in some energy level $  E^{-1}(c) $,
 such that,  the arc $ \gamma:=\pi \circ \phi^L(\theta):[0,t_0] \ \to M $ does not contain  self-intersections points.  
  Let $\Sigma_0\,,\Sigma_{t_0} \subset E_L^{-1}(c)$ be  local transverse sections of $ \left.\phi^L_t\right|_{E_L^{-1}(c)}$ at the points $\theta $ and $\phi_{t_0}^L( \theta)$ 
   respectively. Then, if $\Sigma_0 $ is small enough, there is a differentiable function  $ \tau: \Sigma_0 \to \R $ such that  the Poincar\'e map \[P_{t_0}= P_{t_0}(L,\theta, \Sigma_0,\Sigma_{t_0}):\Sigma_0\rightarrow  \Sigma_{t_0} \] defined by \[ P_{t_0}( \nu ) = \phi_{\tau(\nu)} ( \nu ) \in \Sigma_{t_0},\] 
 is a diffeomorphism.  It is a basic fact, that the pullback of the canonical symplectic form $ \omega_0$ on $ T^*M $ by the Legendre transformation $ \m L: TM \to T^* M$ induces a symplectic structure on the transverse sections $ \Sigma_0 $ and $ \Sigma_{t_0}$, and the Poincar\'e map preserves these structures. 
Therefore, via Darboux coordinates, we  identify   the linearized Poncar\'e map \[  d_\theta P_{t_0} : T_\theta \Sigma_0 \to T_{\phi_{t_0}(\theta)} \Sigma_{t_0} \] with a matrix in the symplectic group
 $
  Sp(n)=\left\{ A \in \R^{2n\times 2n} : A^*  
 \left[\begin{smallmatrix}
  0& I \\ 
  -I & 0 
 \end{smallmatrix}\right]
 A=
 \left[\begin{smallmatrix}
  0& I \\ 
  -I & 0 
 \end{smallmatrix} \right] 
 \right\}.
 $

\smallskip

 
Let $ \m F^r (M) $ be the space of all smooth potentials  $u:M\to \R $ endowed  with the 
$ C^r$-topology, $ r \geq 2$.
 Chosen     a tubular 
neighborhood $ W \subset M $ of $\gamma([0,t_0])$,  we set $ \m F^r(\gamma, t_0 ,W) \subset \m F^r(M)$ as the subset of the  potentials  with support in $ W $  that satisfy:  
\[ u(\gamma(t) )=0 \,\, \mbox{ and } \, \, \ d_{\gamma(t)}u =0 \ , \ \forall t \in [0,t_0].
\]
Note that, for any $u\in\m F^r(\gamma,t_0, W)$, the arc $ \phi_t^L(\theta)= (\gamma(t),\dot \gamma(t))$ for $ t \in [0,t_0] $ is also an orbit segment of the Lagrangian flow for the perturbed Lagrangian \[ L_u(x,v):= L(x,v) - u(x) \] and the energy level $E_{L_u}^{-1}(c) $ is tangent to $ E_{L}^{-1}(c) $ along  $ [0,t_0]\mapsto\phi_t^L(\theta)$. 
Then we can consider the map: 
\begin{eqnarray}
\label{def of the map S}
S_{t_0,\theta}:  \m F^r(\gamma, t_0, W) &\longrightarrow & Sp(n)   \\
 u &\longmapsto& S_{t_0,\theta}(u)=d_\theta P_{t_0,u} \nonumber
 \end{eqnarray}
where $d_\theta P_{t_0,u}:T_\theta \Sigma_0 \rightarrow T_{\phi_t(\theta)} \Sigma_{t_0} $ 
is the linearized Poincar\'e map for the  flow of the perturbed 
Lagrangian $ L_u $ in the energy level $ E_{L_u}^{-1}(c) $.

 \smallskip
 

L Rifford and R. Ruggiero in \cite{RR:2011} 
 proved 
that  for all symplectic matrix $ A \in Sp(n) $ sufficiently  closed to  $ S_{t_0, \theta}(0)=d_\theta P_{t_0} \in Sp(n)$
there is  a smooth potential $u:M\to \R $, 
such that,  $ S_{t_0,\theta}(u)= A $. 
Moreover, using methods from geometric control theory, they showed  that given $ \epsilon > 0 $ there is  $ \delta=\delta(\epsilon) >0$, such that:
\[ B(\delta, d_\theta P_{t_0})=\{ A \in  Sp(n) : \|A-d_\theta P_{t_0} \| < \delta \} \subset S_{t_0,\theta} \left( \{ u \in \m F^r(\gamma, t_0, W): \|u\|_{C^r} < \epsilon \} \right), \]
for $ r \geq 2$, see  Proposition 4.2 and Lemma 4.3 in \cite{RR:2011}.

Thus, to prove our  version of the Franks' Lemma for Tonelli Lagrangians (see Theorem~\ref{Franks lemma}), 
we will verify that for the case  
\[S_{t_0,\theta}:  \m F^2(\gamma, t_0, W) \rightarrow  Sp(n)  \ (  \mbox{for r=2}),\]
given $ \epsilon >0$, there is a  radius $ \delta$ that does not depend on $ \theta \in   E^{-1}(c)$ and   $ t_0$ in a fixed closed interval away from zero, such that 
 \[ B(\delta, d_\theta P_{t_0}) \subset S_{t_0,\theta} \left( \{ u \in \m F^2(\gamma, t_0, W): \|u\|_{C^2} < \epsilon \} \right), \]

\smallskip

Before we enunciate our version of the Franks' Lemma, we need to discuss some preliminary results. 

\subsection{Finsler geometry and local coordinates}\

We recall that the Lagrangian flow of $ L $ is conjugated to a Hamiltonian flow on $ ( T^* M, \omega_0) $ by the Legendre transformation  $ \m L( x,v)=( x, \frac{\partial L}{\partial v}(x,v)) $. The corresponding Hamiltonian   $H=H(L):T^*M\rightarrow\mathbb{R}$ is given by :
\begin{equation}
\label{Hamiltonian of L}
H(x,p) = \max_{v \in T_xM} \{ p(v)- L(x,v)\}
\end{equation} 
and  we have the Frechel inequality 
\begin{equation}
\label{Frechel}
p(v) \leq H(x,p) + L(x,v)
\end{equation}
with equality, if only if, $ (x,p)=\m L( x,v)
 $ or equivalently $ p=\frac{\partial L}{\partial v}(x,v) \in T_x^*M$.   Therefore  \[ H\left(x, \frac{\partial L}{\partial v}(x,v) \right) = E (x,v).\]  
 Given a energy level $ E_L^{-1}(c) $ for some  $ c > e_0(L) $, the  set $ H^{-1}(c) := \m L \left(  E_L^{-1}(c) \right) \subset T^* M $ is called Hamiltonian level. 
  
\smallskip

 It follows from the convexity of $ L $   that
 \[ \min_{v \in T_xM} E(x,v) = E(x,0) \leq 
  e_0(L) \]
So,
the zero sections of $TM$ is always contained in the sublevel $
E_L^{-1}( -\infty,  c)  \subset TM.$  
Changing to the Hamiltonian setting, by (\ref{Hamiltonian of L}), we have 
 \[ H(x,0)= - \min_{ v \in T_x M} L(x,v)
 . \]  
 Thus, if $ e_0(L) < c  \leq  - \min_{ (x,v) \in T M} L(x,v) $ the  sublevel $
 H^{-1} (-\infty,c) \subset T^* M $  does not contain the zero section of the cotangent bundle. 


However, it is well know that, if $c>c(L)$, then 
the sublevel $ H^{-1}(-\infty, c)$ contain an exact (lagrangian) graph (cf. \cite{Contreras_Iturriaga_Paternain_Paternain:1998a}),  that is,  there exists  a smooth function  $f:M \to \R $, such that 
\begin{equation}
\label{sub solutions HJ}
H(x,d_xf)<c \ ,\  \  \forall \, x \in M.  
\end{equation} 
Then,  the translated Hamiltonian \[ H_{df}(x,p):=H(x,df+p),\] is such that  the corresponding sublevel $ H_{df}^{-1} (-\infty,c) \subset T^* M $  contains the zero section of the cotangent bundle. It implies that $ H_{df}^{-1}(c) \subset T^*M $ is equal to the unity sphere cotangent bundle of a Finsler metric on $ M$, therefore the flow $ \left.\phi^{H_{df}}_t \right|_{H_{df}^{-1}(c)} $ is a reparametrization of a Finsler geodesic flow.
Moreover,  the Lagrangian flow and the energy functions of  $ L_{df}(x,v):= L(x,v)-d_x f(v)  $ (that is the Legendre transformation of $ H_{df}(x,p) $) coincide with the Lagrangian flow    and energy of  $ 
L $. Thus the Hamiltonian flows $ \phi^{H_{df}}_t $ and $ \phi^{H}_t $ are 
conjugated by the smooth map $ (x,p) \mapsto (x,p+d_xf_0) $. 

If $c \leq c(L)$, the 
inequality (\ref{sub solutions HJ}) does not admit  a solution $f:M\rightarrow R$ (see Theorem A in \cite{Contreras_Iturriaga_Paternain_Paternain:1998a}).  However, the following lemma implies that,  for any energy value $ c \in (e_0, c(L)]$,  there are always local solutions. 

\begin{lem}{\cite[Lemma 2.2]{Asselle_Mazzucchelli:2019}}
\label{solution_local_Asselle}
If $c>e_0(L)$, every point of the closed manifold $M$ admits an open neighborhood $ N \subset M$ and a Tonelli Lagrangian $\tilde{L}:TM\rightarrow\mathbb{R}$ such that $\tilde{L}|_{TN}=L|_{TN}$ and $c(\tilde{L})<c$.
\end{lem}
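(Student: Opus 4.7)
The plan is to find a smooth function $\hat f\colon M\to\R$ that serves as a global sub-solution $\tilde H(x,d_x\hat f)<c$ for a suitable Tonelli modification $\tilde L$ of $L$; the contrapositive of the characterization immediately after (\ref{sub solutions HJ}) then yields $c(\tilde L)<c$, while the equality $\tilde L|_{TN}=L|_{TN}$ will hold by construction.

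First I would build a local sub-solution near $x_0$. The key observation is the identity $\min_{p\in T^*_xM}H(x,p)=E_L(x,0)$, attained at $p^*(x):=\Lv(x,0)$: indeed, by Legendre duality $\partial H/\partial p=0$ if and only if $p=\Lv(x,0)$, and substituting gives $H(x,p^*(x))=p^*(x)\cdot 0-L(x,0)=E_L(x,0)$. Setting $p_0:=p^*(x_0)$, we obtain $H(x_0,p_0)\le e_0(L)<c$. In a coordinate chart around $x_0$, the affine function $f(x):=p_0\cdot(x-x_0)$ satisfies $d_xf\equiv p_0$, so continuity of $H$ yields an open neighborhood $U$ of $x_0$ on which $H(x,d_xf)<c$.

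Next I would globalize by cutoff. Choose nested, relatively compact open sets $N\subset\subset U'\subset\subset U$, and extend $f|_{U'}$ to a smooth $\hat f\colon M\to\R$ that equals $f$ on $U'$. By compactness of $\overline{U'}$ there is a uniform gap $\delta>0$ with $H(x,d_x\hat f)\le c-\delta$ on $\overline{U'}$, and the quantity $K:=\max_{x\in M}H(x,d_x\hat f)$ is finite. Pick a smooth $\psi\colon M\to\R$ with $\psi\equiv 0$ on $N$, $\psi\le 0$ everywhere, and $\psi\le c-K-1$ on $M\setminus U'$ (a single bump function suffices). Define $\tilde L(x,v):=L(x,v)-\psi(x)$; then $\tilde L$ is Tonelli, $\tilde L|_{TN}=L|_{TN}$, and $\tilde H(x,p)=H(x,p)+\psi(x)$.

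I would conclude by checking the sub-solution inequality for $\hat f$ case by case: on $N$, $\tilde H(x,d_x\hat f)=H(x,d_xf)<c$; on $U'\setminus N$, $\tilde H(x,d_x\hat f)\le(c-\delta)+0<c$; and on $M\setminus U'$, $\tilde H(x,d_x\hat f)\le K+(c-K-1)=c-1<c$. The contrapositive of the statement following (\ref{sub solutions HJ}) then gives $c(\tilde L)<c$. The only delicate step is the transition region $U'\setminus N$, where $\psi$ is not yet strongly negative but strict inequality must still be maintained; the uniform gap $\delta$ supplied by the compactness of $\overline{U'}\subset U$ is precisely what absorbs any nonpositive contribution from $\psi$ there.
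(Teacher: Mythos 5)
Your proof is correct, and it reproduces the standard argument for this statement; the paper itself does not prove the lemma but merely cites it as Lemma~2.2 of Asselle--Mazzucchelli, whose proof follows the same scheme (a local affine subsolution built from $p_0=\frac{\partial L}{\partial v}(x_0,0)$, a smooth global extension, and a nonpositive cutoff potential forcing the sub-solution inequality away from the neighborhood). The one point worth flagging in your write-up is that the inference ``$\exists f$ with $\tilde H(x,d_xf)<c$ $\Rightarrow$ $c(\tilde L)<c$'' is the contrapositive of Theorem~A of \cite{Contreras_Iturriaga_Paternain_Paternain:1998a} as cited after (\ref{sub solutions HJ}), not of the implication stated just before it, but this is exactly the reference the paper itself supplies and your conclusion is justified.
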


Therefore, up to a translation by an exact section,  the flow on $ H^{-1}(c)$  (with $ c> e_0(L) $) is locally conjugated to a Finsler geodesic flow. Then, using the exponential map of  Finsler manifolds, the following lemma holds.

\begin{lem}\label{r_injetivity}{\rm (\cite[Lemma 2.3(i)]{Asselle_Mazzucchelli:2019})}.
Let $c>e_0(L)$. There exist $\tau_{inj}>0$ and 
$\rho_{inj}>0$, such that, every point $ x \in M$ admits an open neighborhood $U_{x}\subset M$ containing the compact Riemannian ball $\overline{B_g(x,\rho_{inj})}=\{y\in M; \mbox{dist}(x,y)\leq\rho_{inj}\}$ such that the smooth map \[\psi_{x}:[0,\tau_{inj})\times (E_L^{-1}(c)\cap T_{x}M)\rightarrow U_{x}\] given by $\psi(t,v)=\pi\circ\phi^L_{t}(x,v)$ restricts to a diffeomorphism \[\psi_{x}:(0,\tau_{inj})\times (E_L^{-1}(c)\cap T_{x}M)\rightarrow U_{x}\setminus\{x\}.\]
\end{lem}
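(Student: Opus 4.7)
The plan is to reduce the problem locally to Finsler geodesic flow and then invoke the standard injectivity-radius statement for Finsler exponential maps, finishing by a compactness argument to extract uniform constants $\tau_{inj}, \rho_{inj}$.

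First I would fix $x_{0}\in M$ and apply Lemma~\ref{solution_local_Asselle} to obtain an open neighborhood $N_{x_{0}}\subset M$ of $x_{0}$ and a Tonelli Lagrangian $\tilde L_{x_{0}}:TM\to\R$ with $\tilde L_{x_{0}}|_{TN_{x_{0}}}=L|_{TN_{x_{0}}}$ and $c(\tilde L_{x_{0}})<c$. Because $c>c(\tilde L_{x_{0}})$, the discussion around equation~(\ref{sub solutions HJ}) yields a smooth function $f_{x_{0}}:M\to\R$ with $\tilde H(x,d_{x}f_{x_{0}})<c$ for every $x\in M$, where $\tilde H$ is the Hamiltonian associated with $\tilde L_{x_{0}}$. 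Translating by $df_{x_{0}}$, the energy level $\tilde H^{-1}_{df_{x_{0}}}(c)$ becomes the unit cotangent bundle of a smooth Finsler metric $F_{x_{0}}$ on $M$; the Hamiltonian flows of $\tilde H$ and $\tilde H_{df_{x_{0}}}$ are conjugate via $(x,p)\mapsto(x,p+d_{x}f_{x_{0}})$, and the corresponding projected Lagrangian flows on $N_{x_{0}}$ coincide with the $F_{x_{0}}$-geodesic flow up to a smooth positive time reparametrization.

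Next I would use the standard construction of the Finsler exponential map $\exp^{F_{x_{0}}}_{y}:T_{y}M\to M$ for $y$ in a (possibly shrunk) neighborhood of $x_{0}$: there exists $\rho_{x_{0}}>0$ such that $\exp^{F_{x_{0}}}_{y}$ restricts to a diffeomorphism from an open Euclidean ball of $T_{y}M$ onto an open set containing $\overline{B_{g}(y,\rho_{x_{0}})}$. In polar coordinates, this produces a diffeomorphism
\[
(s,w)\;\longmapsto\;\exp^{F_{x_{0}}}_{y}(sw),\qquad (0,\sigma_{x_{0}})\times S^{F_{x_{0}}}_{y}M\;\longrightarrow\; U_{y}\setminus\{y\},
\]
where $S^{F_{x_{0}}}_{y}M$ is the $F_{x_{0}}$-unit sphere at $y$. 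Because, on $TN_{x_{0}}$, the Legendre transform $\m L$ carries $E_{L}^{-1}(c)\cap T_{y}M$ onto $S^{F_{x_{0}}}_{y}M$ (translated by $d_{y}f_{x_{0}}$), and because the Lagrangian time $t$ is related to the Finsler arclength $s$ by a smooth positive reparametrization $s=s(t,v)$ with $s(0,v)=0$ and $\partial_{t}s(0,v)>0$, I obtain from the Finsler polar diffeomorphism the desired diffeomorphism property of $\psi_{y}(t,v)=\pi\circ\phi_{t}^{L}(y,v)$ on $(0,\tau_{x_{0}})\times(E_{L}^{-1}(c)\cap T_{y}M)$ for some $\tau_{x_{0}}>0$.

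Finally, I would cover $M$ by finitely many neighborhoods $N_{x_{0}^{(1)}},\dots,N_{x_{0}^{(k)}}$, set
\[
\tau_{inj}:=\min_{j}\tau_{x_{0}^{(j)}},\qquad \rho_{inj}:=\min_{j}\rho_{x_{0}^{(j)}},
\]
and obtain constants valid uniformly in $x\in M$. The main obstacle is this last uniformity step: the local Finsler metric $F_{x_{0}}$ and the time reparametrization $s(t,v)$ depend on the chart, so one has to verify that the injectivity radius of $F_{x_{0}}$, a positive lower bound on $\partial_{t}s(0,v)$, and a comparison between the $F_{x_{0}}$-ball and the Riemannian ball $\overline{B_{g}(x,\rho_{inj})}$ can all be controlled uniformly; this ultimately follows from the compactness of $M$ and of $E_{L}^{-1}(c)$ together with continuity of the Legendre transform and of the solution $f_{x_{0}}$ constructed from Lemma~\ref{solution_local_Asselle}.
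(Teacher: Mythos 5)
Your proposal is correct and takes essentially the same approach the paper indicates: the paper gives no proof itself, merely citing Asselle--Mazzucchelli Lemma~2.3(i) and preceding the statement with the one-sentence hint that, after translating by an exact $1$-form (via Lemma~\ref{solution_local_Asselle}), the flow on $H^{-1}(c)$ is locally conjugate to a Finsler geodesic flow, so the result follows from the Finsler exponential map. Your sketch fleshes out that hint in the expected way --- local Tonelli modification, translation by $df$, Maupertuis/Finsler polar-coordinate diffeomorphism, then compactness of $M$ and $E_L^{-1}(c)$ to extract uniform $\tau_{inj},\rho_{inj}$ --- which is the argument the paper is deferring to.
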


\smallskip
As a  corollary of the Lemma~\ref{r_injetivity}, we obtain: 

\begin{cor}
\label{k_0} Let $c>e_0(L)$.
If $k_0:= k_0(L,c)=(\tau_{inj}/4)$,  
 then
\[ \gamma= \pi \circ \phi^L_t(\theta) :[0,2k_0]\rightarrow  M \] 
is  injective,  for  all  $ \theta \in E_L^{-1}(c)$. 
\end{cor}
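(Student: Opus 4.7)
The plan is to reduce the claim to a direct application of Lemma \ref{r_injetivity} after translating time. Suppose, for contradiction, there exist $0 \le t_1 < t_2 \le 2k_0$ with $\gamma(t_1) = \gamma(t_2)$. I would shift the origin of time by setting $\theta' := \phi^L_{t_1}(\theta)$, writing $(x',v') := \theta'$, and $s := t_2 - t_1$. Since the energy is preserved by the flow, $\theta' \in E_L^{-1}(c)$, so in particular $v' \in E_L^{-1}(c)\cap T_{x'}M$; and the flow property together with our self-intersection assumption yields
\[ \psi_{x'}(s,v') \;=\; \pi\circ\phi^L_s(\theta') \;=\; \pi\circ\phi^L_{t_2}(\theta) \;=\; \gamma(t_2) \;=\; \gamma(t_1) \;=\; x'. \]

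Next, I would check that $s$ lies strictly in the interval where Lemma \ref{r_injetivity} applies: since $k_0 := \tau_{inj}/4$, one has $0 < s \le 2k_0 = \tau_{inj}/2 < \tau_{inj}$. Hence Lemma \ref{r_injetivity}, applied at the base point $x'$, asserts that the restriction of $\psi_{x'}$ to $(0,\tau_{inj})\times(E_L^{-1}(c)\cap T_{x'}M)$ is a diffeomorphism onto $U_{x'}\setminus\{x'\}$. In particular $\psi_{x'}(s,v') \neq x'$, which contradicts the displayed identity and closes the argument.

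There is essentially no substantive obstacle here; the statement is a few lines once the time-shift trick is invoked. The factor of $4$ in $k_0 := \tau_{inj}/4$ is calibrated precisely so that after shifting the starting point to any interior point of the segment, the remaining portion still has length strictly less than $\tau_{inj}$, placing it inside the range on which Lemma \ref{r_injetivity} forbids the projected trajectory from returning to its base point.
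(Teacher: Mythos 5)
Your argument is correct and is essentially the one-line proof the paper has in mind when it presents this statement "as a corollary" of Lemma~\ref{r_injetivity}: shift the origin to $x' = \gamma(t_1)$, note the elapsed time $s = t_2 - t_1$ still satisfies $0 < s \leq 2k_0 < \tau_{inj}$, and conclude from the lemma that $\psi_{x'}(s,v')$ lies in $U_{x'}\setminus\{x'\}$, contradicting $\psi_{x'}(s,v') = x'$. One small remark: the factor $4$ in $k_0 = \tau_{inj}/4$ is stronger than what this particular argument requires (any $k_0 < \tau_{inj}/2$ would do); the extra slack is harmless here and is what the authors actually use elsewhere (e.g.\ when restricting the time $\tau$ to $[k_0/4,3k_0/4]$ in the construction of the perturbation).
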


  \bigskip


We fix  $ c > e_0(L)$ and $ \theta \in E_L^{-1}(c)$.  
 If  $\m L ( \theta)=(x,p) \in H^{-1}(c) $, then $ \phi^H_t(x,p)= ( \gamma(t),p(t))$, where $p(t)=  \frac{\partial L}{\partial v}(\gamma(t),\dot \gamma(t))$. 
Choosing  an open tubular neighborhood  $ W \subset M$ of $\gamma([0,2k_0])$ there is  local coordinates ${x}=(x_0, x_1\cdots, x_n) : W \to \R^{n+1}$, such that 
 \[ x_0(\gamma(t))=t \,\,\mbox{ and }\,\, x_i(\gamma(t))=  0,\] for all $t\in [0,2k_0]$ and $ 1\leq i \leq n$. Then,
if  $p\in T^*_x M$,  with $ x \in W$, we  define $(p_0,p_1,\cdots,p_n)$ by  $p=\sum p_i dx_i$ and we have a natural chart    
\[(x_0, x_1,\cdots, x_n, p_0, p_1, \cdots, p_n): \pi^{-1}(W) \subset T^*M\to \R^{2n+2} \]  that is called {\it natural coordinates } associate to $ x:W\rightarrow \R^{n+1}$.
In natural coordinates the canonical symplectic structure is written as $\omega = \sum_{i=0}^n dx_i \wedge dp_i$ and the Hamilton's equations are:
\begin{equation}
\label{Hamilton's equations}
 \left\{
\begin{array}{ccr}
\dot x_i(t) &=& H_{p_i}(x(t),p(t)) \\
\dot p_i(t) &=& -H_{x_i}(x(t),p(t))
\end{array}
\right.\  \ ,\ \ \ \  
i= 0,1,\dots,n.
\end{equation} 
 Using the basic identity \[\dt  d_\theta \phi_t^H = d_{\phi_t^H(\theta)} X^H  \ d_\theta \phi_t^H ,\]
from (\ref{Hamilton's equations}), we obtain the equations for the linearized Hamiltonian flow, along of  
$ \phi_t^H(x,p)=( \gamma(t), p(t)) $, which we call {\it Jacobi equation}:
\begin{equation}
\label{general Jacobi equation}
\dt
\begin{bmatrix}
a \\ b
\end{bmatrix}
=
\begin{bmatrix}
H_{px} & H_{pp}\\
-H_{xx} & -H_{xp}
\end{bmatrix}
\begin{bmatrix}
a \\ b
\end{bmatrix}.
\end{equation}  

\smallskip

To simplify these equations, 
  we will change the natural coordinates to a special coordinate associated with particular local coordinates in the tubular neighborhood $ W\subset M $ of the segment $ \gamma([0,2k_0])$,   that generalize the well-known Fermi coordinates along a neighborhood of a minimal geodesic.
First, we  observe that 
\[ H\left( \gamma(t), \ \frac{\partial L}{\partial v}(\gamma(t),0)\right) = E( \gamma(t),0) < E( \gamma(t),\dot \gamma(t))=c ,\] 
for all $ t \in \R$.  Thus, taking an exact  1-form $ df_0 $, such that \[ df_0(\gamma(t)) =   \frac{\partial L}{\partial v}(\gamma(t),0),\ \ \ \forall  t \in [0,2k_0] ,\] and, decreasing the neighborhood W if necessary, 
we obtain  that
\[ H(x,d_x f_0) < c,  \ \ \ \ \ \forall \ \ x \in W \subset M.\] 
Therefore, as in Lemma \ref{solution_local_Asselle},
it  implies that the   simple segment $ \gamma: [0, 2k_0] \to M$   admits an open  tubular neighborhood $ W \subset M$ and a Tonelli Lagrangian $\tilde{L}:TM\rightarrow\mathbb{R}$ such that $\tilde{L}|_{TW}=L|_{TW}$ and $c(\tilde{L})<c$. Consequently, the Lagrangian flow restricted  to 
$ TW \cap E_L^{-1}(c)$ is conjugated to a Finsler geodesic flow.
 Then, by an application of a lemma obtained by Li and Nirenberg \cite[Lemma 3.1]{Li_Nirenberg:2005} for Finsler geodesic flows,   we obtain the special local coordinates given in the following result. These coordinates are used also in \cite[ Lemma 3.1]{RR:2011}, and a poof can be seen in \cite[Lemma C1]{Figalli_Rifford:2015}.

\begin{lem}
\label{coordinates}
There exist  a  open neighborhood $ W $ of the segment  $\gamma([0,2k_0])\ \subset M$ and   smooth local coordinates  $x: W \to 
[0,2k_0] \times [-\epsilon_0,\epsilon_0]^n ,
$  with $ x= ( x_0,x_1,\cdots,x_n )$,   such that, 
the Hamiltonian in the natural coordinates  $ (x,p) $ in $  \pi^{-1}(W) 
\subset T^*M $, satisfies  following properties  for every $ t \in [0,2k_0]$:
\begin{itemize}
\item[(a)] $\gamma(t)) = ( t,0,\dots , 0) \ \ \mbox{ and }\ \ \  p(t)=(1,0,\dots,  0)$ 
\item [(b)] $ H_{px}( \gamma(t), p(t)) =H_{xp}( \gamma(t), p(t)) =0 $
\item [(c)]$ H_{pp} ( \gamma(t), p(t)) = \begin{bmatrix}
H_{p_0p_0} &  0 \\
0& I_n 
\end{bmatrix}.$
\item[(d)] $ H_{xx} ( \gamma(t), p(t)) = \begin{bmatrix}
0&  0 \\
0& K(t) 
\end{bmatrix},$ where $ K(t) \in \R^{n\times n} $ denotes the symmetric matrix given by
$ [K(t)]_{ij}=
H_{x_i x_j}(\gamma(t),p(t))$, for ${1\leq i,j\leq n} $, that we call  curvature by 
similarity with the Jacobi equation of the geodesic flows in Fermi coordinates. 
\end{itemize}  
\end{lem}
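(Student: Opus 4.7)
The plan is to reduce the statement to the existence of Fermi-type coordinates along a simple geodesic arc of a Finsler metric -- the form in which the result is available in the literature -- and then to read off (a)--(d) from the standard normalizations.

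First, using Corollary~\ref{k_0}, the arc $\gamma([0,2k_0])\subset M$ is embedded, so I choose a tubular neighborhood $W\subset M$ of it. Following the argument sketched in the paragraph preceding the lemma, I pick a smooth $f_0:W\to\R$ with $d_{\gamma(t)}f_0=\frac{\partial L}{\partial v}(\gamma(t),0)$ for every $t\in[0,2k_0]$; this is possible because a prescribed $1$-form along an embedded arc extends to a closed $1$-form on a tubular neighborhood, which (after shrinking to a contractible neighborhood) is exact. By compactness of $\gamma([0,2k_0])$, continuity, and the strict inequality $E(\gamma(t),0)<c$ from (\ref{min of E_L in T_xM}), I may further shrink $W$ to ensure $H(x,d_xf_0)<c$ throughout $W$. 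Then Lemma~\ref{solution_local_Asselle} applied to the translated Hamiltonian $H_{df_0}$ produces a Tonelli Lagrangian $\tilde L$ on $M$ that agrees with $L-df_0$ on $TW$ and has critical value $c(\tilde L)<c$. Since adding an exact $1$-form $df_0$ to $L$ changes neither the Lagrangian flow nor the energy function, and since the Contreras--Iturriaga--Paternain--Paternain theorem \cite{Contreras_Iturriaga_Paternain_Paternain:1998a} then identifies the flow of $\tilde L$ on its energy level $c$ (up to reparametrization) with the geodesic flow of a Finsler metric $F$ on $M$, the problem reduces to producing the claimed coordinates along the simple $F$-geodesic arc $\gamma$.

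For the geometric heart of the proof I invoke Lemma~3.1 of Li--Nirenberg \cite{Li_Nirenberg:2005} (see also \cite[Lemma~3.1]{RR:2011} and \cite[Lemma~C1]{Figalli_Rifford:2015}): along a simple geodesic arc of a Finsler metric there exist smooth coordinates $x=(x_0,x_1,\dots,x_n)$ in which $x_0$ is $F$-arclength along $\gamma$ and $x_1,\dots,x_n$ are $F$-normal transverse coordinates with respect to a parallel $F$-orthonormal frame. Property (a) is immediate from this construction. The transverse block of $H_{pp}$ in (c) being the identity encodes the orthonormality of the transverse frame, and the vanishing of its off-diagonal entries $H_{p_0p_j}$ encodes orthogonality of this frame to $\dot\gamma$; the $(0,0)$ entry $H_{p_0p_0}$ is left unconstrained and simply records the fiberwise convexity of $H$ in the $p_0$ direction. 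The vanishing of the $x_0$-rows of $H_{px}$ in (b) and of $H_{xx}$ in (d) follows from the dynamics along $\gamma$: since $(\gamma(t),p(t))=((t,0,\dots,0),(1,0,\dots,0))$ is an integral curve of $X^H$ with $\dot x=(1,0,\dots,0)$ and $\dot p=0$, differentiating the identities $H_p(\gamma,p)\equiv(1,0,\dots,0)$ and $H_x(\gamma,p)\equiv 0$ in $t$ gives $H_{p_ix_0}(\gamma,p)=0$ and $H_{x_ix_0}(\gamma,p)=0$. The remaining transverse entries $H_{p_ix_j}(\gamma,p)=0$ for $i,j\geq 1$ in (b) are the Finsler analogue of the vanishing of the Christoffel symbols along $\gamma$ in geodesic-normal coordinates, which is precisely what the Li--Nirenberg construction guarantees. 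The unnormalized lower-right block of $H_{xx}$ then defines the symmetric ``curvature'' matrix $K(t)$ in (d).

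The only substantial step is the existence of the Finsler-Fermi coordinates adapted to a parallel orthonormal transverse frame along $\gamma$; I defer this entirely to Li--Nirenberg. Everything else amounts to verifying that the exact-form translation by $df_0$ and the Finsler time-reparametrization preserve the block structures claimed at the reference trajectory and reference momentum -- which they do, because (a)--(d) are pointwise conditions at $(\gamma(t),p(t))$ and are invariant under these two operations.
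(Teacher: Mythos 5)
Your argument follows essentially the same route as the paper: choose an exact form $df_0$ along the embedded arc so that (after shrinking $W$) one has $H(x,d_xf_0)<c$, apply the Asselle--Mazzucchelli localization to get a Tonelli $\tilde L$ with $c(\tilde L)<c$ agreeing with $L$ over $W$, observe that the flow on $E_L^{-1}(c)$ over $W$ is a reparametrized Finsler geodesic flow, and then invoke Li--Nirenberg (together with \cite[Lemma~3.1]{RR:2011} and \cite[Lemma~C1]{Figalli_Rifford:2015}) for the Fermi-type coordinates. The observations you add --- that differentiating $H_p(\gamma,p)\equiv(1,0,\dots,0)$ and $H_x(\gamma,p)\equiv 0$ in $t$ gives the $x_0$-column of $H_{px}$ and the $x_0$-row/column of $H_{xx}$ vanish, while the transverse normalizations in (b), (c), (d) encode the parallel orthonormal frame --- are correct and consistent with the references.

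The one step that does not hold up is your closing claim that (a)--(d) ``are invariant under the exact-form translation $df_0$ and the Finsler time-reparametrization because they are pointwise conditions.'' This is false: under $p\mapsto p+d_xf_0$ one has $(H_{df_0})_{px}=H_{px}+H_{pp}\cdot D^2f_0$ and $H_{xx}$ is corrupted similarly, so the second derivatives of $f_0$ genuinely enter and the block structure does not transport trivially; the Finsler reparametrization likewise acts nontrivially on Hessian data since the Finsler Hamiltonian and $H$ differ off the level set. The paper sidesteps this entirely because \cite[Lemma~C1]{Figalli_Rifford:2015} and \cite[Lemma~3.1]{RR:2011} prove the block structure directly for the Tonelli Hamiltonian $H$ in natural coordinates, which is exactly what the lemma asserts, rather than first producing it for the Finsler Hamiltonian and then transporting it. Since you also cite these, the fix is simply to lean on them as the black box. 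One smaller imprecision: your list of ``remaining transverse entries'' $H_{p_ix_j}$ with $i,j\geq 1$ omits the entries $H_{p_0x_j}$ for $j\geq 1$, which must also vanish for (b) and do not follow from the dynamical differentiation; they come from the same normal-coordinate normalization as the others.
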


\bigskip

We fix the tubular neighborhood $W$ of $\gamma([0,2k_0]) $ and the coordinates given by  Lemma~\ref{coordinates}.
For each $t\in[0,2k_0]$, we choose  the local transverse sections $ \Sigma_t \subset H^{-1}(c) $ at $\theta(t):=(\gamma(t), p(t))$, such that $T_{\theta(t)} \Sigma_t =\m N_t $, where \begin{equation}\label{invariant_subspace}
 \m N_t= \{ (a,b)\in T_{\theta(t)} T^*M : a_0=0 ; b_0=0\} \approx \R^{n} \times \R^{n}. 
\end{equation}
It follows from  Lemma~\ref{coordinates}  and  (\ref{general Jacobi equation})  that  the $2n$-dimensional subspaces $ \m N_t \subset T_{\theta(t)} T^*M$ 
is invariant by the differential of the Hamiltonian flow of $ H$ along the orbit $ \theta(t)$. Then, 
 we obtain that
\[ X_0(t):=d_\theta P_t = d_\theta  \left. \phi_t^H \right|_{\m N_0 } : \m N_0 \longrightarrow \m N_t \]
is a solution of the equations:
\begin{equation}
\label{Jacobi equation}
\dot X_0(t)= \begin{bmatrix}
0& I_n \\
-K(t) & 0
\end{bmatrix}
X_0(t) .
\end{equation}

\subsection{General perturbations} \ 

We recall that $ \m F^2(M)$ denotes the space of all smooth potentials  $u: M \to \R $,  with the $ C^2$-topology. 
 Let $\m S(n)$ be  the set of symmetric $ n\times n$  matrices and let  $ \tilde{\m  F}^2(\gamma, k_0 ,W) \subset \m F^2 (M) $ be the subset of  $u:M\to \R$ supported   in $ W$, such that, in the local coordinates of Lemma~\ref{coordinates} $x=(x_0, x_1,\dots,x_n)$, have the form   
\begin{equation}
\label{u(t,x) local}
 u(x)=u(x_0,x_1\dots, x_n ) = \alpha(x_1,\dots,x_n) \sum_{i,j=1}^n \beta 
(x_0)_{ij} \ x_ix_j 
\end{equation}
where $\supp(\alpha) \subset(-\epsilon_0,\epsilon_0)^n $
 and $ \beta:[0,2k_0] \to \m S(n)$ is  supported in $(0,2k_0)$. 
Note that if $ u\in  \tilde{\m  F}^2(\gamma, k_0 ,W)$, then:
 \[u(t,0,\cdots,0)=0 , \ \  d_{(t,0,\cdots,0)}u=0 \]
 and
\[\frac{ \partial^2}{\partial x_0 \partial x_j} u(t,0\cdots,0)= 0 \] 
for all $ t \in [0,2k_0]$ and $ 1\leq j\leq n$. 
Then, for any $ u\in  \tilde{\m  F}^2(\gamma, k_0, W)$, the  Hamiltonian $ H_u:=H+u $ also satisfies  (a), (b), (c) and (d)  in Lemma~\ref{coordinates}.

Therefore, we can consider the map: 
\begin{eqnarray*}
S_t:  \tilde{\m F}^2(\gamma, k_0, W) &\longrightarrow & Sp(n) \\
 u &\longmapsto& X(u,t)
 \end{eqnarray*}
where \[X(u,t):= d_\theta  \left. \phi_t^{H_u} \right|_{\m N_0 } : \m N_0 \longrightarrow \m N_t .\] 

By lemma ~\ref{coordinates} and (\ref{Jacobi equation}), we obtain that  $ X(u,t) $ is a solution of the differential  equation:
\begin{equation}
\label{equation for the map S }
  \dot X(t) = \begin{bmatrix}
0& I_n \\
-K^u(t) & 0
\end{bmatrix} X(t),
\end{equation}
where\begin{equation}
\label{curvature of H^u}
 K^u(t)_{ij}:= K(t)_{ij} + \frac{ \partial^2}{\partial x_i \partial x_j} u(t,0\dots,0)
\ , \ \ \mbox{ for }  \ 1\leq i,j\leq n .
 \end{equation}
Note that, if $ n= \dim(M)-1 =1 $, then $ K^u(t) $ is just a scalar function.

\subsection{Generic Condition } \ 
\label{generic condition}

Let $ \m S(n) $ be the set of $ n \times n $ symmetric matrix. 
Let  \[h_n:\m S (n)\rightarrow [0,\infty) \] be  defined as  
\[h_1(A)= 1  \] 
for all $  A \in  \m S(1)\cong \R$ ($n=1$),
 and  
\[h_n(A)= \prod_{ 0\leq i<j \leq n}( \lambda_i -\lambda_j)^2 , \]
 where $ \lambda_1, \dots, \lambda_n$ are the real eigenvalues of the  $ n \times n $ symmetric matrix $ A$, for any    $n \geq 2$. Observe that $h_n$ is continuous.
We denote
\begin{equation}
\label{eq. function H_n(l,c)}
\Phi_n (L,c)=\Phi_n(H,c) := \min_{\theta \in  E_L^{-1}(c)}\left\{ \max_{t \in \left[0,\frac{k_0}{2}\right]} \  h_n( K(\theta(t) ))\right\}.
\end{equation}
 
 It follows from formula (\ref{u(t,x) local}) and  (\ref{curvature of H^u}), that  the symmetric matrix $K^u(\theta)$ depends continuously on $u\in \tilde{\m F}^2(\gamma, k_0, W)  $ and any smooth path $\alpha(t)$ on $\m S(n)$, 
 with $\alpha(0)=K^u(\theta(0))$ can be realized by the $C^2$ perturbation  $ H+u$ of $ H$,  which preserves the orbit segment $\theta([0,2k_0])$.

Let $ \m G(n,L,c) \subseteq \m F ^2(M) $ be the set of potentials $ u: M \to \R $ defined by
\begin{equation}
\label{eq. set G(n,L,c)}
 \m G(n, L,c)=\m G(n,H,c):=\{u\in\m F^2(M)\,;\,\Phi_n (H+u,c)>0\}.
\end{equation}
Then  $ \m G(1,L,c) = \m F ^2(M) $.

The following proposition was proved in \cite{Contreras:2010} considering the perturbation of the eigenvalues of a one-parameter family of symmetric matrix defined by the same way as (\ref{curvature of H^u}). 

\begin{prop}{\cite[Theorem 6.1]{Contreras:2010}}
\label{generic conditions} Let $ \dim (M)=n+1$.
The set $\m G(n,L,c)$ is open in  $ \m F^2(M)$ and  $\m G(n,L,c)$ is dense in $C^\infty (M)$. 
\end{prop}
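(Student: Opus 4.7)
The plan is to establish openness by continuous dependence of all ingredients on $u$ in the $C^2$-topology, and density by combining a codimension argument for symmetric matrices with repeated eigenvalues with the perturbative freedom granted by potentials of the form~(\ref{u(t,x) local}). For openness, fix $u_0\in\m G(n,L,c)$. The condition $c>e_0(L-u)$ is $C^0$-open, and for $u$ in a $C^2$-neighborhood of $u_0$ the energy level $E_{L-u}^{-1}(c)$ is a regular compact hypersurface varying continuously in Hausdorff distance, orbits depend $C^1$-continuously on $(u,\theta)$ on compact time intervals (smooth dependence of ODE solutions on parameters), and the Fermi-type coordinates of Lemma~\ref{coordinates} persist uniformly in a tubular neighborhood of each orbit segment. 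Since $K^u(\theta(t))$ involves second derivatives of $H_u = H+u$ along the orbit, it depends continuously on $(u,\theta,t)$ when $u$ varies in $C^2$. Composing with the continuous function $h_n$, taking $\max$ over $t\in[0,k_0/2]$ and then $\min$ over the compact varying set $E_{L-u}^{-1}(c)$ yields continuity of $u\mapsto\Phi_n(L-u,c)$, from which openness of $\m G(n,L,c)$ follows.

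For density, the key geometric input is that
\[
\m D_n \;:=\;\{A\in\m S(n):h_n(A)=0\}\;\subset\;\m S(n)
\]
---the semi-algebraic set of real symmetric matrices with at least one coincident pair of eigenvalues---has codimension $\geq 2$ in the vector space $\m S(n)$. Consequently, a generic smooth curve in $\m S(n)$ misses $\m D_n$, and any continuous curve can be displaced off $\m D_n$ at some parameter value by an arbitrarily small $C^0$-perturbation. Combining this with the realization principle recalled immediately before the proposition---any smooth curve $\alpha(t)\in\m S(n)$ with $\alpha(0)=K(\theta(0))$ is obtainable as $K^{\tilde u}(\theta(t))$ for a suitable potential $\tilde u$ of the form~(\ref{u(t,x) local}), supported in a tubular neighborhood of the injective arc $\pi(\theta([0,2k_0]))$ (injectivity from Corollary~\ref{k_0})---I obtain a local perturbation lemma: for each $\theta_0\in E_L^{-1}(c)$ and each $\epsilon>0$ there is a smooth $\tilde u$ with $\|\tilde u\|_{C^\infty}<\epsilon$ such that $h_n(K^{\tilde u}(\theta_0(t_\star)))>0$ for some $t_\star\in(0,k_0/2)$. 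Continuity of $h_n$ and of the curvature in $(u,\theta)$ then extends this positivity to an open neighborhood $U_{\theta_0}\subset E_L^{-1}(c)$ of $\theta_0$, at the same $t_\star$.

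To globalize, cover the compact set $E_L^{-1}(c)$ by finitely many such open neighborhoods $U_1,\dots,U_N$ and, starting from an arbitrary base potential $u_0\in C^\infty(M)$, proceed inductively on $k=1,\dots,N$. At step $k$, given $u_{k-1}$ for which $\Phi_n$ is positive on orbits meeting $U_1\cup\cdots\cup U_{k-1}$, use the openness already proved to pick $\delta_k>0$ so that every $C^2$-perturbation of size $<\delta_k$ preserves this positivity; then add a local perturbation $\tilde u_k$ of $C^\infty$-norm $<\delta_k$, supported near the arc associated with $U_k$, whose $\beta$-profile in~(\ref{u(t,x) local}) is calibrated so that the updated curvature $K^{u_{k-1}+\tilde u_k}(\theta(t_\star^k))$ lies outside $\m D_n$ for every $\theta\in U_k$ at a common $t_\star^k$, and set $u_k=u_{k-1}+\tilde u_k$. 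After $N$ steps, $u_N\in\m G(n,L,c)$ is $C^\infty$-close to $u_0$. The main technical obstacle is precisely this inductive calibration: each successive perturbation must be small enough to respect the openness threshold from the previously treated flow boxes yet effective enough to push the curvature uniformly off $\m D_n$ on the next one---a balance secured exactly by the codimension-$2$ structure of $\m D_n\subset\m S(n)$ together with the full controllability of the curvature afforded by potentials of the form~(\ref{u(t,x) local}).
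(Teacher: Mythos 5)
Your openness argument is sound: $K^u$ depends on second derivatives of $u$, $h_n$ is continuous, and the min-max of a continuous function over compact sets varying continuously with the parameter is continuous, giving continuity of $u\mapsto\Phi_n(L-u,c)$ in the $C^2$-topology and hence openness. The density argument, however, contains a genuine gap at the inductive gluing step, which you flag as ``the main technical obstacle'' but do not actually resolve. The circularity is this: the radius of each neighborhood $U_{\theta_0}$ on which a local perturbation $\tilde u$ guarantees $h_n(K^{\tilde u}(\theta(t_\star)))>0$ is controlled by the distance of $K^{\tilde u}(\theta_0(t_\star))$ from $\m D_n$, and that distance is $O(\|\tilde u\|_{C^2})$. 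So the finite subcover $U_1,\dots,U_N$ is only valid for perturbations of a fixed size $\epsilon_0$. But in your induction you must take $\|\tilde u_k\|_{C^\infty}<\delta_k$ with $\delta_k$ chosen from the openness threshold of step $k-1$, and nothing prevents $\delta_k\ll\epsilon_0$. A perturbation of size $\delta_k$ then only certifies positivity on a strictly smaller neighborhood than $U_k$, so the cover is no longer exhausted and the induction stalls. The von~Neumann--Wigner codimension-$2$ fact and the single-arc controllability you invoke are both correct and relevant, but they do not by themselves supply the uniform, quantitative local lemma needed to make the $U_k$'s size-independent of the perturbation scale.

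What is missing is a genuinely parametric argument: one must treat $\theta\mapsto\big(t\mapsto K^u(\theta(t))\big)$ as a family of curves in $\m S(n)$ depending on the parameter $u$, establish that the evaluation (or jet-evaluation) map into $\m S(n)$ is a uniform submersion over the compact energy level, and then apply a parametric Sard/Thom transversality theorem to conclude that for a residual (hence dense) set of $u$ every such curve meets $\m S(n)\setminus\m D_n$. That route avoids the circularity entirely, since it produces a single perturbation rather than an inductively calibrated sequence. Your proposal would need to be restructured along those lines, or else supplemented with a lemma asserting that the size of $U_{\theta_0}$ can be taken independent of the perturbation magnitude below some threshold---a statement that is not obviously true and is not argued.
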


\subsection{The Franks' Lemma for Tonelli Lagrangian}\ 

We want to apply a version of the Franks' Lemma to an arc of a closed orbit.  
So, given a closed orbit  $ \phi^L_t(\theta)= (\gamma(t),\dot \gamma(t))$ in $ E_L^{-1}(c)$ the extremal closed curve $   \gamma:\R \to M $  may have points of self intersection. In a more general way, given any finite set of non-self intersecting extremal segments $\mathfrak F=\{\eta_1,\cdots,\eta_m\}$, defined on $[0,2k_0]$, such that the following properties are satisfied:
\begin{itemize}
\item The endpoints of $\eta_i$ are not contained in $W$;
\item The segment $\gamma([0,2k_0])$ intersects each $\eta_i$ transversely.
\end{itemize}
Denote by $\m F^2(\gamma,k_0,W,\mathfrak{F} )$ the set of potentials $u\in\m F^2(\gamma,k_0,W)$ such that $u =0$ in a small neighborhood of $W\cap\cup_{i=1}^m\eta_i([0,2k_0])$.  Thus, for each  closed orbit  $ \phi^L_t(\theta)=(\gamma(t),\dot \gamma(t)) $ in the energy  level $E_L^{-1}(c) $, as in (\ref{def of the map S}), we define the map: 
\begin{eqnarray*}
S_{t_0,\theta}:  \m F^2(\gamma, k_0, W, \mathfrak{F}) &\longrightarrow & Sp(n)   \\
 u &\longmapsto& S_{t_0,\theta}(u)=d_\theta P_{t_0,u} \nonumber
 \end{eqnarray*}
where $d_\theta P_{t_0,u}:T_\theta \Sigma_0 \rightarrow T_{\phi_{t_0}(\theta)} \Sigma_{t_0} $ 
is the linearized Poincar\'e map for the  flow of the perturbed 
Lagrangian $ L_u $ in the energy level $ E_{L_u}^{-1}(c) $,  and $ t_0 \in ( 0,2k_0]$.

\smallskip

We are now ready to state an analog version of the infinitesimal part of Frank's Lemma for Tonelli Lagrangians or Hamiltonians by performing $ C^2$-perturbations in the sense of Ma\~n\'e.

\begin{thm}
\label{Franks lemma} 
Let $L: TM\rightarrow\R $ be a Tonelli Lagrangian on a closed manifold $ M$ of dimension $ n+1 \geq 2$.
 Let $c>e_0(L)$ and suppose that $\Phi_n(L,c)>0$. Then,  given a $ C^2$-open neighborhood $ \mathcal{U} \subset \m F^2 (M)$  of $u_0(x)\equiv 0$; for any  
\begin{itemize}
\item $ k_0\leq t_0\leq 2k_0$;
\item a segment of a extremal curve  $\gamma:[0,2k_0]\rightarrow M$ such that the orbit $( \gamma(t), \dot  \gamma(t)) $ is contained in $  E^{-1}_L(c)$;
\item a tubular neighborhood $ W $ of  $\gamma([0,2k_0])\subset M$;
\item a finite set of non-self intersecting extremal segments $\mathfrak F=\{\eta_1,\cdots,\eta_m\}$, defined on $[0,2k_0]$ such that the segment $ \gamma([0,2k_0])$ intersects each $\eta_i$ transversely,
\end{itemize}
 there exists $ \delta = \delta( L,c,\m U )>0 $ such that the image of the  set $ \mathcal U \cap \mathcal F^2(\gamma,k_0,W,\mathfrak{F})$ under the map 
 $S_{t_0,\theta}$ contains a ball of radius $\delta$ centered at $S_{t_0,\theta}(u_0)=d_\theta P_{t_0}(u_0) \in Sp(n)$.

\end{thm}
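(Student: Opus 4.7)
The plan is to reduce the statement to a local surjectivity (open mapping) result for an endpoint map of a bilinear symplectic control system, and then extract uniformity of the radius $\delta$ from compactness of the energy level. Working in the special coordinates of Lemma~\ref{coordinates} and restricting to the invariant subspace $\m N_t$, we identify $\m N_0\simeq\m N_{t_0}\simeq\R^{2n}$ by Darboux coordinates and view $S_{t_0,\theta}(u)$ as the endpoint $X(u,t_0)$ of the Jacobi equation~(\ref{equation for the map S }). When $u$ is drawn from the ansatz~(\ref{u(t,x) local}) with $\alpha\equiv 1$ near the $x_0$-axis, its effect on $X(u,t_0)$ is entirely captured by the prescribed symmetric perturbation $\beta(t)$ of the curvature $K(t)$, compactly supported in $(0,2k_0)$. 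The task becomes to show that $\beta\mapsto X(\beta,t_0)$ is open onto $Sp(n)$ at $\beta=0$, with opening radius controlled uniformly in $(\theta,t_0)$.

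For the local surjectivity, write $X(\beta,t)=X_0(t)Y(\beta,t)$, so that $Y$ satisfies $\dot Y=X_0(t)^{-1}\bigl[\begin{smallmatrix}0&0\\-\beta(t)&0\end{smallmatrix}\bigr]X_0(t)\,Y$ with $Y(0)=I$; the Fr\'echet derivative of $\beta\mapsto Y(t_0)$ at $\beta=0$ is
\[
\beta\ \longmapsto\ \int_0^{t_0} X_0(s)^{-1}\begin{bmatrix}0&0\\-\beta(s)&0\end{bmatrix}X_0(s)\,ds,
\]
which lands in $\mathfrak{sp}(n)$. Following the spectral strategy of Contreras~\cite{Contreras:2010}, one verifies that the image of this operator equals all of $\mathfrak{sp}(n)$ whenever the spectrum of $K(\theta(t))$ is separated on a subinterval of $[0,k_0/2]$, a quantitative version of which is precisely $\Phi_n(L,c)>0$. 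Combined with the $C^2$-realization of symmetric matrix perturbations by potentials provided by Proposition~4.2 and Lemma~4.3 of~\cite{RR:2011} (which yield a $u\in\tilde{\m F}^2(\gamma,k_0,W)$ of controlled $C^2$-norm whose induced $\beta$ matches any prescribed small symmetric function), the inverse function theorem yields $\delta=\delta(\theta,t_0,\m U)>0$ with $B_\delta(d_\theta P_{t_0})\subseteq S_{t_0,\theta}(\m U\cap\tilde{\m F}^2(\gamma,k_0,W))$.

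Uniformity $\delta=\delta(L,c,\m U)$ over $(\theta,t_0)\in E_L^{-1}(c)\times[k_0,2k_0]$ then follows from compactness together with continuous dependence of $X_0$, $K(\theta(\cdot))$, and the spectral-gap lower bound $\Phi_n(L,c)>0$ on the base point. The avoidance of the auxiliary family $\mathfrak F$ is handled via the transverse cutoff $\alpha$ in~(\ref{u(t,x) local}): since each $\eta_i$ meets $\gamma$ transversely at only finitely many points, one chooses $\alpha$ equal to $1$ on a neighborhood of the $x_0$-axis and identically zero on small transverse neighborhoods of those intersections; the induced curvature perturbation $\beta$ is unaffected because it depends only on the transverse Hessian of $u$ along the axis, while the resulting $u$ now lies in $\m F^2(\gamma,k_0,W,\mathfrak F)$.

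The main obstacle will be the surjectivity of the linearization onto $\mathfrak{sp}(n)$: if the eigenvalues of $K(\theta(t))$ were everywhere coincident, the image of the derivative would be confined to a proper subspace of $\mathfrak{sp}(n)$, so the hypothesis $\Phi_n(L,c)>0$ must be used quantitatively to produce a subinterval of $[0,k_0/2]$ on which the eigenvalues of $K(\theta(\cdot))$ remain uniformly separated, and the resulting lower bound on $\delta$ must be monotone in this separation. Tracking the constants so that $\delta$ ends up depending only on $L$, $c$ and $\m U$, rather than on the individual orbit, is the technical heart of the argument.
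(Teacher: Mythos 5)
Your outline correctly identifies the overall architecture of the paper's argument — coordinates adapted to $\gamma$ via Lemma~\ref{coordinates}, reduction to the Jacobi equation on the invariant subspace $\m N_t$, the Fr\'echet derivative $\beta\mapsto\int_0^{t_0}X_0^{-1}(s)\left[\begin{smallmatrix}0&0\\-\beta(s)&0\end{smallmatrix}\right]X_0(s)\,ds$ landing in $\mathfrak{sp}(n)$, and the role of $\Phi_n(L,c)>0$ as a quantitative eigenvalue separation. But the step you flag as the main obstacle is not mere bookkeeping: it is the entire content of Proposition~\ref{prop. cota derivada}, and your sketch leaves it unaddressed. The paper does \emph{not} treat this as an infinite-dimensional inverse-function problem. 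It selects a specific finite-dimensional family of dimension $2n^2+n=\dim Sp(n)$, parametrized by $(a,b,c,d)\in\m S(n)^3\times\m S^*(n)$ via $\beta(w)(t)=h(t)\bigl[a\,\delta(t)+b\,\delta'(t)+c\,\delta''(t)+d\,\delta'''(t)\bigr]$. Integration by parts converts each $\delta^{(k)}$-factor into an iterated Lie bracket with $\A(s)$; the result is the linear map $T(\xi)$ of \eqref{map T}, and the triangular system \eqref{syst}--\eqref{b-b*} is solved quantitatively using the eigenvalue gap at the time $\tau$ through Lemma~\ref{Lemma of subjectivity}. Without this commutator computation there is no lower bound $\|d_w F(\xi)\|\geq k_6\|\xi\|$, and hence no explicit radius $\delta$.

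Your uniformity argument is also not filled in. You first extract $\delta(\theta,t_0,\m U)$ pointwise and then appeal to compactness of $E_L^{-1}(c)\times[k_0,2k_0]$, but the domains $\m F^2(\gamma,k_0,W,\mathfrak F)$, the coordinate charts of Lemma~\ref{coordinates}, and the maps $S_{t_0,\theta}$ all vary with $\theta$, so an abstract compactness argument would require exhibiting a continuous bundle structure for this whole family, which your outline does not provide. The paper sidesteps this entirely by fixing uniform constants $k_1,\ldots,k_6$ depending only on $(L,c,\m U)$ in \eqref{k_1}--\eqref{rho and k_6}, proving the derivative estimate uniformly, and then concluding with the quantitative open-mapping Lemma~\ref{general lemma of open maps} and the $C^2$-norm control of Lemma~\ref{normC2}. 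Finally, the transverse cutoff $\alpha$ alone cannot enforce avoidance of $\mathfrak F$: in the ansatz \eqref{u(t,x) local}, $\alpha$ depends only on $(x_1,\dots,x_n)$, while the $\eta_i$ cross $\gamma$ at specific values of $x_0$, so a cutoff along $x_0$ is indispensable; this is what the factor $h(t)$ supported away from $\gamma^{-1}[\cup_i\eta_i]$ accomplishes, with the error term it introduces controlled by~\eqref{f, rho}.
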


To prove this theorem, following the strategy used by G. Contreras in \cite{Contreras:2010}, we will get uniform estimates of the integral equations for the directional derivatives of the map  $S_{t_0,\theta}$ applying to an appropriated finite-dimensional family of potentials, then we will obtain a uniform $ \delta $, for all $ \theta \in E_L^{-1}(c)$ and $ t_0 \in [k_0, 2k_0]$.

\subsection{Proof of the Theorem \ref{Franks lemma}}\ 
 
Given  a Tonelli Lagrangian $ L:TM \to \R $  and  $c>e_0(L)$, we consider the corresponding Hamiltonian $H:T^*M\to \R $, given by (\ref{Hamiltonian of L}) and the Hamiltonian level $ H^{-1}(c)= \m L(E^{-1}(c))$.  
 We assume that  $\Phi_n(L,c)>0$ (see \eqref{eq. function H_n(l,c)}).

Let  $ \m U \subset \m F^2(M) $ be a neighborhood of $ u_0(x)\equiv 0 $, small enough such that $ \m U \subset \m G(n,H,c)$. We will denote $H^u= H+u$.

First, we going to fix some constants and functions that will be useful in the following proposition. 

\begin{itemize}
\item By shrinking $ \m U $ if necessary, let $ k_1=k_1(\m U,c)>0$ be such that the curvature matrix, given in (\ref{curvature of H^u}), satisfies
\begin{equation}
\label{k_1}
\|K^u(\theta(t))\| < k_1,  \ \mbox{ for all } \ \ u \in \m U, \ \  \theta \in 
(H^u)^{-1}(c) \ \  \mbox{ and all } t\in [0,2k_0].
\end{equation}

\item Let $ k_2=k_2(\m U,c) >1 $ be such that if $ u \in \m U $, then 
\begin{equation}
\label{k_2}
\|d_\theta \phi_t^{H^u}\| < k_2  \ \mbox{ and } \ \ \|d_\theta \phi_{-t}^{H^u}\| < 
k_2 , \ \   \mbox{ for all }  \  \theta \in 
(H^u)^{-1}(c) \ \  \mbox{ and all } t\in [0,2k_0].
\end{equation}
\item Given $0 < \lambda << k_0/8$, let $k_3= k_3(\lambda)=k_3(\m U,c,\lambda)$ 
be such that $ \lim_{\lambda \to 0} k_3(\lambda)= 0 $ and
 \begin{equation}
\label{k_3}
\|d_\theta \phi_s^{H^u}- d_\theta \phi_t^{H^u}\| < k_3,  \ \mbox{ and } \ \ \|
d_\theta \phi_{-s}^{H^u}- d_\theta \phi_{-t}^{H^u}\| < k_3 
\end{equation}
for all $ |s-t|< \lambda$,  $s,t\in [0,2k_0]$, $ u \in \m U $ and all  $\theta \in 
(H^u)^{-1}(c)$. 
\item Choose $ \lambda= \lambda(\m U, c) $ small enough  such that $0< 
k_3(\lambda) <1/{2 k_2^3} $, then
 \begin{equation}
\label{lambda}
  \frac{1}{k_2^2} - 2 k_2 k_3(\lambda) >0.
\end{equation}

\item 
 Since the Hamiltonian  $ H$  satisfies  the condition $  \Phi_n(H,c)>0$, there exists $ a_0 >0 $ such that $ \Phi_n(H,c) > 2 a_0^2$. 
 Therefore

\begin{equation}
\label{map h(0,theta)}
   \max_{t \in \left[\frac{k_0}{4},\frac{3k_0}{4}\right]}
  \  h_n( K(\theta(t)))
  \geq 2 a_0^2 
\ \ \mbox{ for all } \ \theta \in H^{-1}(c).
\end{equation}

\item Consider  $ \tilde h_n ( u,\theta, t ) := h_n (K^u(\theta(t)) $. Since that the 
map $ h_n $ is continuous and the $ n\times n $ matrix $ K^u(\theta(t)) $ depends continuously on $ u 
\in \m F^2(M) $,  the map    $u \mapsto \tilde h_n  $ is continuous. Let
\[ A_0:= \{ (\theta,t) \in (H^u)^{-1}(c) \times [0,2k_0] : \tilde h_n ( 0, \theta, t ) \geq 2 
a_0^2\}   .\]
Then $ A_0 \subset (H^u)^{-1}(c) \times [0,2k_0] $ is a compact set, and by (\ref{map 
h(0,theta)}), we have that
$ A_0 \cap \left\{ \{\theta\}\times \left[\frac{k_0}{4},\frac{3k_0}{4}\right] \right\} 
\not=\emptyset ,$
for all $ \theta \in H^{-1}(c)$.   Since $ \tilde h_n $ is continuous, there exists a 
neighborhood  $ \m U_0 \subset \m U $ of $ 0 \in \m F^2(M) $ such that 
$ \tilde h_n(u,\theta,t ) > a_0^2$, for all $(u,\theta, t ) \in \m U_0 \times A_0.$ 

\item If $ n \geq 2 $, we choose a time $ \tau = \tau(\theta, \m U)$ such that $ ( \theta, \tau ) \in A_0$, and
\[ a_0^2 <    \prod_{ 1\leq i<j \leq n} ( \lambda_i -\lambda_j)^2
\leq \left( 2 \|H_{xx}^u(\theta(\tau))\|\right) ^{2(m-1)} |\lambda_i- \lambda_j|^2
 \leq ( 2 k_1) ^{2(m-1)} |\lambda_i- \lambda_j|^2, \]
 for all $ u \in \m U _0 $, where $ m =m(n)= \left(\begin{array}{c} 
n\\ 2 \end{array}\right) = \displaystyle\frac{n(n-1)}{2}$. Therefore  
\begin{equation}
\label{k_4}
\min_{i\not=J} |\lambda_i-\lambda_j|>\frac{a_0}{(2k_1)^{m-1}}= k_4(n):=k_4.
\end{equation}
\item Let 
\begin{equation}
\label{k_5}
k_5:=k_5(n)=\left\{\begin{array}{lc}
\max \{ k_4^{-1}(n),\  1 + 4k_1k_4^{-1}(n),\ 1 + k_1 \}, & \mbox{ if } n\geq 2 ,\\
1 + k_1, & \mbox{ if } n=1
\end{array} \right.
\end{equation}
\item  Let $ \delta:[0,2k_0] \to [0 ,\infty) $ be a $ C^\infty$ function such that 
$ \delta(s) = 0 $ if  $ |s-\tau |\geq \lambda $, and
$ \int_0^{t_0} \ \delta(t)\ dt = 1 $ , where $ \lambda(\m U_0) $ is from 
(\ref{lambda}). The $ C^5$ norm of $\delta $ depends only $ \m U_0 $ and does not 
depend on $ \tau(\theta, \m U_0).$
\item By (\ref{lambda}), there exists $ \rho= \rho(\m U) >0 $ such that
\begin{equation}
\label{rho and k_6} 
k_6:= \frac{k_2^{-2}-2k_2k_3-\rho k_2^2 \|\delta\|_{C^0}}{k_2k_5} >0.
\end{equation} 

\item Given  a finite set of non-self intersecting extremal segments
  $\mathfrak F=\{\eta_1,\ldots,\eta_m\}$, defined on $[0,2k_0]$,
   such that  the segment $\gamma([0,2k_0])$
 intersects each $\eta_i$ transversely.
 Let $h:[0,2k_0]\to[0,1]$ be a $C^\infty$ function with support outside
  the intersecting points
  \[
  \supp(h)\subset (0,2k_0) \setminus
   (\gamma)^{-1}\left[\cup_{i=1}^m\eta_i\right]
  \]
  and such that  
\begin{equation}
\label{f, rho}
\int_0^{t_0} ( 1-h(t) ) dt < \rho\,.
\end{equation}   

\item 
Given $ \epsilon >0 $, let $\alpha_\epsilon: [-\epsilon,\epsilon]^n \to 
\R $ be a  $ C^\infty$-function   
  such that $ \alpha_\epsilon(x_1,\dots,x_n)=1 $  if  $(x_1,\dots,x_n) \in 
[-\frac{\epsilon}{4},\frac{\epsilon}{4}]^n $, and  $ \alpha_\epsilon(x_1,\dots,x_n)=0 
$  if  $(x_1,\dots,x_n) \notin 
[-\frac{\epsilon}{2},\frac{\epsilon}{2}]^n $.
\item Finally , we fix  $ \epsilon_1=\epsilon_1(\gamma, \mathfrak{F},W)  
<\epsilon_0 $ such that in the local coordinates $ x: W \to [0,2k_0]\times 
[-\epsilon_0, \epsilon_0]^n $, given by Lemma~\ref{coordinates},  the segments $ 
\eta_i  $ do not intersect the set $ \supp(h) \times [-\epsilon_1, \epsilon_1]^n$.   

\end{itemize}

\smallskip

Once that $\m S(n)$ are symmetric $n\times n$ matrices and denoting by ``$*$'' the transposed operator of the matrix, we can write:
\begin{eqnarray*}
  \m S(n) &=& \{ a \in \R^{n\times n} : a^*=a\},\\
 \m S^*(n) &=& \left\{ d\in  \m S(n) : d_{11}= \dots =d_{nn}=0 \right\} ,\\
  \m{AS}(n) &=&\{ e  \in \R^{n\times n} : e^*=-e\} .
\end{eqnarray*}
Note that if $ M $ is a surface, that is $ n=1$, then $ \m S^*(1)=\m{AS}(1) =\{ 0\}$.
 
Now we are ready to start the following proposition:


\begin{prop}
\label{prop. cota derivada} Let $ t_0\in [k_0,2k_0]$.
Let $ F: \m S(n)^3 \times \m S^*(n) \to Sp(n) $ be the map defined as
\[ \m S(n)^3 \times \m S^*(n) \ni  (a,b,c,d)=w  \longmapsto F(w)
=\left. d_\theta \phi_{t_0}^{\left(H^{u(w)} \right)} \right|_{\m N_0} ,\] 
where 
\[ H^{u(w)}(x,p)= H(x,p)+ u(w)(x)\]
with
\begin{equation}
\label{definitions of u(w)(t,x)}
 u(w)(t,x_1,\dots x_n)= \alpha_{\epsilon}(x_1,\dots x_n)  \ \sum_{i,j=1}^n [\beta(w)
 (t)]_{ij} \ x_i x_j ,
 \end{equation}
for some $ 0<\epsilon\leq\epsilon_1$, and 
\begin{equation}
\label{definitions of beta(w)(t)}
\beta(w)(t)=h(t) \left[ a \delta(t) + b\delta^\prime (t) + c 
\delta^{\prime\prime}(t) + d \delta^{\prime\prime\prime}(t) \right] .
\end{equation}
Then, if $ u(w) \in \m U_0 $ , there exist $k_6=k_6(H,c,\mathcal{U})>0$ such that
\[ \left\| d_w F ( \xi) \right\| \geq k_6 \|\xi\| \,,\]
for all $ \xi  \in  \m S(n)^3 \times \m S^*(n)\approx \R^{2n^2+n} $.
 \end{prop}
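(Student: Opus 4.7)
The plan is to differentiate $F$ using the variation-of-constants formula for the linearized Hamiltonian flow along $\theta$, then integrate by parts to trade derivatives of $\delta$ for derivatives of the smooth propagator factor, then concentrate the resulting integral at $s=\tau$ as $\lambda \to 0$, and finally use the eigenvalue separation of $K(\tau)$ (coming from the generic condition $\Phi_n(L,c)>0$) to invert the resulting linear map from $(a,b,c,d)$ with a uniform quantitative lower bound.

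First, write $\Phi(w,s):=X(w,s)$, which satisfies $\dot\Phi = J^{u(w)}(s)\,\Phi$ with $J^{u(w)}(s)=\left[\begin{smallmatrix}0 & I_n\\ -K^{u(w)}(s) & 0\end{smallmatrix}\right]$ and $\Phi(w,0)=I$. Since $u(w)$ depends linearly on $w$, so does $K^{u(w)}$, and the Duhamel formula gives
\begin{equation*}
d_w F(\xi) = \int_0^{t_0} \Phi(w,t_0)\,\Phi(w,s)^{-1}\,\Delta(\xi)(s)\,\Phi(w,s)\,ds,
\end{equation*}
where $\Delta(\xi)(s)=\bigl[\begin{smallmatrix}0 & 0\\ -\beta(\xi)(s) & 0\end{smallmatrix}\bigr]$ and $\beta(\xi)(s)=h(s)\bigl[\xi_a\delta(s)+\xi_b\delta'(s)+\xi_c\delta''(s)+\xi_d\delta'''(s)\bigr]$. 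This is the step where the uniform bounds $k_1,k_2$ on $K^{u(w)}$ and $d_\theta\phi_t^{H^{u(w)}}$ enter to control the factors $\Phi(w,t_0)\Phi(w,s)^{-1}$ and $\Phi(w,s)$.

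Next, integrate by parts in $s$ three times on the $\xi_b,\xi_c,\xi_d$ terms. The cutoff $h$ vanishes on a neighborhood of $\{0,t_0\}$ (and of each $\eta_i\cap W$), killing all boundary contributions. Every differentiation in $s$ of $\Phi(w,s)^{-1}$ produces a factor $-J^{u(w)}(s)\Phi(w,s)^{-1}$ (symmetric statement for $\Phi(w,s)$), so the resulting integrand becomes a smooth matrix expression built polynomially from $K^{u(w)}(s)$ and its first two derivatives, with all pieces bounded in $C^0$ by the constants $k_1,k_2,k_5$. After integration by parts one obtains
\begin{equation*}
d_w F(\xi) = \int_0^{t_0} \delta(s)\,\Phi(w,t_0)\,\Phi(w,s)^{-1}\bigl[R_0(s)\xi_a + R_1(s)\xi_b + R_2(s)\xi_c + R_3(s)\xi_d\bigr]\Phi(w,s)\,ds,
\end{equation*}
plus an error of size $O(\rho\,\|\delta\|_{C^0})$ from the region where $h\neq 1$, which the definition of $\rho$ in \eqref{rho and k_6} is designed to absorb. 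The $R_j(s)$ are explicit matrices of the block form $\bigl[\begin{smallmatrix}0 & 0\\ \ast & 0\end{smallmatrix}\bigr]$ with $\ast$ a polynomial of degree $j$ in $K^{u(w)}(s)$ (up to lower derivatives).

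Now concentrate: since $\delta$ integrates to $1$ on $[\tau-\lambda,\tau+\lambda]$, the continuity estimate $k_3(\lambda)$ for $d_\theta\phi_s^{H^u}$ lets me replace the integrand by its value at $s=\tau$ with error $O(k_2 k_3)$. This reduces matters to bounding below, in operator norm, the linear map
\begin{equation*}
T_\tau\colon (\xi_a,\xi_b,\xi_c,\xi_d)\longmapsto R_0(\tau)\xi_a + R_1(\tau)\xi_b + R_2(\tau)\xi_c + R_3(\tau)\xi_d
\end{equation*}
in terms of $K(\tau)$ and $I$. In a basis diagonalizing $K(\tau)$, the coefficient of the entry $(i,j)$ in $T_\tau(\xi)$ is (up to harmless factors) $(\xi_a)_{ij} + (\lambda_i+\lambda_j)(\xi_b)_{ij} + (\lambda_i^2+\lambda_i\lambda_j+\lambda_j^2)(\xi_c)_{ij} + (\lambda_i-\lambda_j)^2\cdot P_{ij}(\xi_d)$ for some polynomial in the $\lambda_i$. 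The Vandermonde-type determinant on each off-diagonal slot $(i,j)$ is $\prod_{i<j}(\lambda_i-\lambda_j)^2$, which by the bound \eqref{k_4} is at least $k_4^{2(n-1)}$, so the full $(i,j)$-block is invertible with a quantitative lower bound in terms of $k_5$; the diagonal slots only involve the first three Vandermonde rows (hence the restriction $\xi_d\in\mathcal{S}^*(n)$, which removes the redundant diagonal of the $\delta'''$-block).

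The main obstacle is this last linear-algebra estimate: verifying that the explicit block structure of $T_\tau$ is uniformly invertible. Once that injectivity estimate $\|T_\tau(\xi)\|\geq k_5^{-1}\|\xi\|$ is in hand, conjugation by $\Phi(0,t_0)\Phi(0,\tau)^{-1}$ costs a factor $k_2^{-2}$, the $\lambda$-concentration error costs $2k_2 k_3$, and the $h$-cutoff error costs $\rho\,k_2^2\|\delta\|_{C^0}$, so collecting everything yields exactly
\begin{equation*}
\|d_w F(\xi)\|\;\geq\;\frac{k_2^{-2}-2k_2 k_3-\rho\,k_2^2\|\delta\|_{C^0}}{k_2 k_5}\,\|\xi\|\;=\;k_6\,\|\xi\|,
\end{equation*}
which is positive by the choice of $\rho$ in \eqref{rho and k_6}.
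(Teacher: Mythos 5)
Your overall framework matches the paper's: differentiate via Duhamel (the paper writes $Z = XY$ with $Y(t) = \int_0^t X^{-1}\mathbb{B}X\,ds$, which is your formula), integrate by parts three times to trade $\delta',\delta'',\delta'''$ for iterated brackets with the Jacobi matrix $\mathbb{A}(s)$, concentrate at $s=\tau$ using the oscillation bound $k_3(\lambda)$ and absorb the $h$-cutoff error via $\rho$, and finally invert the resulting constant-coefficient map using the eigenvalue separation of $K(\tau)$. The final assembling of constants into $k_6$ is correct.

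However, there is a genuine gap in the middle. You assert that after integration by parts each $R_j(s)$ still has the block form $\left[\begin{smallmatrix}0 & 0\\ \ast & 0\end{smallmatrix}\right]$. That is false, and it is exactly the point where the argument would collapse. Each integration by parts replaces $\delta^{(k)}B$ by $\delta^{(k-1)}[\mathbb{A},B]$, and the commutator $[\mathbb{A},\cdot]$ with $\mathbb{A}=\left[\begin{smallmatrix}0 & I\\ -K & 0\end{smallmatrix}\right]$ moves matter off the lower-left block: $[\mathbb{A},\left[\begin{smallmatrix}0 & 0\\ b & 0\end{smallmatrix}\right]]=\left[\begin{smallmatrix}b & 0\\ 0 & -b\end{smallmatrix}\right]$, then the double bracket fills the upper-right block, and the triple bracket fills the diagonal blocks again (with $Kd+3dK$-type entries). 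This mixing is essential; without it the image of your $T_\tau$ would sit inside the $n(n+1)/2$-dimensional space of lower-left symmetric blocks, while the domain has dimension $2n^2+n$, so $T_\tau$ could never be injective. Consequently the subsequent ``Vandermonde per entry'' estimate does not make sense: you do not get, for each slot $(i,j)$, one scalar equation in $(\xi_a)_{ij},\dots,(\xi_d)_{ij}$ whose solvability is governed by a $4\times 4$ Vandermonde determinant. The paper's actual constant-coefficient map is $T(\xi)=\left[\begin{smallmatrix}\beta & \gamma\\ \alpha & -\beta^*\end{smallmatrix}\right]$ with $\alpha=a-(Kc+cK)$, $\gamma=-2c$, $\beta=b-Kd-3dK$, and it is inverted block by block: $c$ and then $a$ come directly from $\gamma$ and $\alpha$; the antisymmetric part of $\beta$ gives the commutator equation $Kd-dK=\tfrac{1}{2}(\beta-\beta^*)$, which is the one place the eigenvalue-gap bound $k_4$ (via the generic hypothesis $\Phi_n>0$ and the auxiliary Lemma on $L_K$) is used; then $b$ is recovered from the symmetric part. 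You should replace your block-structure claim and the Vandermonde step with this bracket computation and the $L_K$-inversion lemma.
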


\smallskip

\begin{proof}
Given $\xi=(a,b,c;d) \in \m S(n)^3 \times \m S^*(n)$,    consider the curve 
 \[\R\ni r \longmapsto \Gamma(r):=w + r\xi  \in \m S(n)^3 \times \m S^*(n).\] 
Then \[ d_wF(\xi) =\left. \frac{ d }{dr}F(\Gamma(r)) \right|_{r=0}
 =  \left. \frac{d}{dr} \left.\left( d_\theta \phi_{t_0}^{H^{u(\Gamma(r))}} \right|_{\m N_0}\right)\right|_{r=0} .\]
 We denote \[ X_r(t) =  d_\theta \phi_{t}^{H^{u(\Gamma(r))}}:\m N_0 \to \m N_t \] 
 for $ t\in (0,2k_0)$. It follows from   (\ref{equation for the map S }) for $H + u(\Gamma(r))$ that
\begin{equation}\label{rJac}
\dot X_r(t)=\begin{bmatrix} 0 & I \\ -(K(t)+r \beta(t)) & 0
\end{bmatrix}  X_r(t),
\end{equation}
where $\beta(t)=\beta(\xi)(t)$ is from~(\ref{definitions of beta(w)(t)}). 

Let $Z(t)=\left.\frac{d}{dr} X_r(t)\right|_{r=0}$. Deriving  the equation (\ref{rJac}) with respect to 
$r$, we get the differential equation 
\begin{equation}
\label{dZ}
\dot Z = \A Z +\B  X,
\end{equation}
where $\A=\begin{bmatrix} 0 & I \\ -K(t) & 0
\end{bmatrix}$ and
$\B=\begin{bmatrix} 0 & 0 \\ \beta(t) & 0
\end{bmatrix}$. 



\smallskip

Write $Z(t) = X(t) Y(t)$, then from~(\ref{rJac}) and~(\ref{dZ}) we get that 
\[ X \,\dot Y = \B\, X.\]
Since $X_r(0)\equiv I$, we have that $Z(0)=0$ and $Y(0)=0$.
Therefore
\[
Y(t)=\int_0^t X^{-1}(s)\,\B(s)\, X(s)\; ds.
\]

Note that 
\begin{eqnarray}
\frac{\B(s)}{h(s)}= 
\left[\begin{array}{cc}
0 & 0 \\
\frac{\beta(s)}{h(s)} & 0 \end{array} \right]
&=&\left[\begin{array}{cc}
0 & 0 \\
a\delta(s)+b\delta'(s)+c\delta''(s)+d\delta'''(s) & 0 \end{array} \right]\nonumber\\
\nonumber \\ 
&=&\delta(s)A+\delta'(s)B+\delta''(s)C+\delta'''(s)D \label{eq_aintegrar}
\end{eqnarray}
where 
\[
  A=\begin{bmatrix} 0 & 0 \\ a & 0 \end{bmatrix},
  \quad
  B=\begin{bmatrix} 0 & 0 \\ b & 0 \end{bmatrix},
  \quad
  C=\begin{bmatrix} 0 & 0 \\ c & 0 \end{bmatrix},
  \quad
  D=\begin{bmatrix} 0 & 0 \\ d & 0 \end{bmatrix}
  \]

\smallskip 

Integrating by parts the last three  terms of (\ref{eq_aintegrar}) and using~(\ref{rJac}), we have that
 \def\Cc{\begin{bmatrix} c & 0 \\ 0 & -c
  \end{bmatrix}}
  \def\Dd{\begin{bmatrix} \phantom{-}0 & -2d
    \\ -(Kd+dK) & \phantom{-}0 \end{bmatrix}}
  \begin{align*}
 \int_0^{t_0} & X^{-1}(s) \,\delta'(s)\,B X(s)\;ds =\\
    &=\int_0^{t_0} \delta(s)\,\left(\, X^{-1}(s)\dot X(s)\,X^{-1}(s) \, B\, X(s)
    - X^{-1}(s) B \,\A(s)\, X(s)\,\right)\; ds=
    \\
    &=\int_0^{t_0} \delta(s) \, X^{-1}(s)  [\A(s) , B ] X(s) \; ds
    =\int_0^{t_0} \delta(s) \, X^{-1}(s)
          \begin{bmatrix} b & 0 \\ 0 & -b
      \end{bmatrix}
      X(s) \; ds.
   \end{align*}
  
  \smallskip
  
   \begin{eqnarray*}
   \int_0^{t_0} X^{-1}(s) \,   \delta''(s) \,   C X(s)\;ds &
   =&\int_0^{t_0} \delta'(s) \, X^{-1}(s) [\A(s),C] X(s) \; ds=
   \\
 & =& 
   \int_0^{t_0} \delta(s) \, X^{-1}(s) \ \left[ \A(s),  [\A(s),C]\right] \ 
    X(s) \ ds=
    \\
    &=&\int_0^{t_0} \delta(s) \, X^{-1}(s)
    \begin{bmatrix} \phantom{-}0 & -2c
    \\ -(K(s)c+cK(s)) & \phantom{-}0 \end{bmatrix}
    X(s) \; ds.
  \end{eqnarray*}
  
  \smallskip
     
  \[\int_0^{t_0} X^{-1}(s) \,\delta'''(s)\,     D X(s)\;ds
 =\int_0^{t_0} \delta'(s) \, X^{-1}(s)  \left[ \A(s),  [\A(s),D]\right]X(s)s \  ds=
    \]    
\begin{eqnarray*}  & =&\int_0^{t_0} \delta(s) \, X^{-1}(s) \
  \left[ \A(s),  [ \A(s),  [\A(s),D] ]\right] X(s) \    ds=
    \\
 \hspace{2cm}   &=&\int_0^{t_0} \delta(s) \, X^{-1}(s)
        \begin{bmatrix} -K(s)d-3dK(s) & 0
    \\ 0 & 3K(s)d+dK(s) \end{bmatrix}
    X(s) \; ds.
 \end{eqnarray*}
 
Then we have that
    \begin{equation}\label{W1}
  W_{t_0}:=\int_0^{t_0} X^{-1}(s)\  \frac{\B(s)}{h(s)}\  X(s)\ ds
  =\int_0^{t_0} \delta(s) \ X^{-1}(s)\ 
  T(\xi) \  X(s)\ ds
  \end{equation}
where $ T: \m S^3(n) \times \m S^*(n) \to \mathfrak{sp}(n)  $ is the linear map
\begin{equation}
\label{map T}
T(\xi )=T(a,b,c,d)= \begin{bmatrix} \beta & \gamma \\
  \alpha & -\beta^*\end{bmatrix}, 
\end{equation}
defined  by 
 \begin{eqnarray}
  \alpha &=& a - (Kc+cK), \notag\\
  \gamma &=& -2c\,,  \label{syst}\\
  \beta &=& b - Kd - 3dK. \notag
 \end{eqnarray}
We want to solve the above equations   for the variables $a,b,c\in S(n)$ and $d\in S^*(n)$, where $\alpha,\gamma\in S(n)$ and $\beta\in\R^{n\times n}$ are arbitrary parameters.

\begin{rem}
\label{remark SL surface case}
In the  case of $ \dim(M)=2$, we have $ d =0\in \R $, therefore the corresponding system {\rm (\ref{syst})} is a $ 3\times3 $ linear system that  has the solution 
  \begin{eqnarray*}
   a &= &\alpha - \frac{1}{2} (K\gamma +\gamma K). \\
  b &= &\beta,  \\
  c &= &-\frac{1}{2}\,\gamma ,
  \end{eqnarray*}
 for $ K=K(\tau) \in \R$, with $ \tau \in [\frac{k_0}{4}, \frac{3k_0}{4}]$ fixed.
 \end{rem}
  
  If $ \dim(M)=n+1 \geq 3$,  we start by separating $\beta$ into a sum of a symmetric matrix and an antisymmetric matrix. Thus we obtain the following auxiliary equation 
  \begin{equation}\label{b-b*}
  Kd - dK = \frac{\beta - \beta^*}2.
   \end{equation}

We recall that since we suppose that $ \Phi_n(H,c)>0 $, we can choose the time $ \tau \in [k_0/4,3k_0/4] $ for which all eigenvalues of   $ K(\tau)$ are distinct and satisfy (\ref{k_4}).  The following lemma gives a matrix $d \in S^*(n)$  of equation (\ref{b-b*}).

\begin{lem}{\cite[Lemma7.4]{Contreras:2010}}
  \label{Lemma of subjectivity}
  For each $ n \geq 2$, let $K$ be a $ n \times n$  symmetric matrix  and let
  $L_K:\m S^*(n)\to \m{AS}(n)$ be given by
  \[L_K(d):=K d-d K.\]
  Suppose that the eigenvalues $\lambda_i$ of $K$ are all distinct.
  For all $e\in \m{AS}(n)$ there exists $d\in\m S^*(n)$
  such that $L_K\,d=e$ and
  \[
  \| d\| \leq \frac {\| e\|}{\min\limits_{ i\ne j}|\lambda_i-\lambda_j|}.
  \]
   \end{lem}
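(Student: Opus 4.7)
The plan is to reduce the problem to an entrywise computation by diagonalizing $K$. Since $K$ is symmetric, there is an orthonormal basis of $\R^n$ in which $K = \Lambda = \text{diag}(\lambda_1,\ldots,\lambda_n)$ with the $\lambda_i$ all distinct by hypothesis; I will carry out the argument directly in this eigenbasis.

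In such a basis the commutator takes the simple form
\[
(L_K(d))_{ij} \;=\; (Kd - dK)_{ij} \;=\; (\lambda_i - \lambda_j)\,d_{ij},
\]
so the equation $L_K d = e$ decouples position by position. Given $e \in \m{AS}(n)$, the natural candidate is
\[
d_{ij} := \frac{e_{ij}}{\lambda_i - \lambda_j} \quad (i \ne j), \qquad d_{ii} := 0,
\]
which is well-defined thanks to the distinct-eigenvalue hypothesis. Three things then need checking. First, $d \in \m S^*(n)$: symmetry follows from $d_{ji} = e_{ji}/(\lambda_j - \lambda_i) = e_{ij}/(\lambda_i - \lambda_j) = d_{ij}$ using antisymmetry of $e$, and the diagonal vanishes by construction. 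Second, $L_K d = e$ is immediate from the displayed formula, with the diagonal entries matching because both vanish. Third, setting $m := \min_{i \ne j}|\lambda_i - \lambda_j|$, the entrywise inequality $|d_{ij}| \le |e_{ij}|/m$ yields $\|d\| \le \|e\|/m$ for any entry-monotone matrix norm (operator and Frobenius both qualify).

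There is essentially no obstacle in the argument once one commits to working in an eigenbasis of $K$. The only delicate point --- and presumably the reason the lemma is isolated rather than proved inline --- is that the zero-diagonal condition defining $\m S^*(n)$ is not preserved under arbitrary orthogonal conjugation, so the reduction to the case $K = \Lambda$ is a genuine basis choice rather than a free normalization. In the application inside Theorem~\ref{Franks lemma} one further rotates the transverse $(x_1,\ldots,x_n)$ directions of the Fermi-type coordinates from Lemma~\ref{coordinates}, which is compatible with properties (a)--(d) there, so that the curvature matrix $K(\tau)$ is diagonal at the chosen time $\tau \in [k_0/4, 3k_0/4]$; the linear-algebraic lemma then applies and yields the matrix $d \in \m S^*(n)$ needed to produce the antisymmetric part of $\beta$ in the system (\ref{syst}).
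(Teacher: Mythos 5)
Your proof is correct, and it is surely the same argument as in \cite[Lemma 7.4]{Contreras:2010} (the present paper states the lemma with a citation but does not reproduce the proof): diagonalize $K$, observe that the commutator equation decouples to $(\lambda_i-\lambda_j)\,d_{ij}=e_{ij}$, and read off $d$ together with its norm bound. The observation you isolate is genuinely important and, as far as I can tell, left implicit in the paper: the lemma as literally stated for an arbitrary symmetric $K$ with distinct eigenvalues is \emph{false}, because $\m S^*(n)$ is a basis-dependent subspace. For instance, with $n=2$ and $K=\left[\begin{smallmatrix}0&1\\1&0\end{smallmatrix}\right]$ (eigenvalues $\pm 1$), the space $\m S^*(2)$ is spanned by $K$ itself, so $L_K\equiv 0$ on $\m S^*(2)$ and surjectivity cannot possibly hold. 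The correct reading is that $K$ is presented in its eigenbasis, and in the application inside Theorem~\ref{Franks lemma} one must first perform a fixed orthogonal rotation of the transverse $(x_1,\dots,x_n)$ coordinates from Lemma~\ref{coordinates} so that $K(\tau)$ is diagonal at the chosen time $\tau$. As you note, this rotation is compatible with properties (a)--(d) of Lemma~\ref{coordinates} (it only conjugates $K(t)$ by a constant orthogonal matrix and leaves $H_{pp}$, $H_{px}$, and the reference orbit invariant), and it is the genuine content of the reduction, not a free normalization. Your write-up handles all of this correctly.

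One small slip at the end of your argument: the operator norm is \emph{not} entry-monotone. For example $A=\left[\begin{smallmatrix}1&1\\1&1\end{smallmatrix}\right]$ and $B=\left[\begin{smallmatrix}1&-1\\1&1\end{smallmatrix}\right]$ satisfy $|A_{ij}|=|B_{ij}|$, yet $\|A\|_{\mathrm{op}}=2>\sqrt{2}=\|B\|_{\mathrm{op}}$, so the parenthetical ``operator and Frobenius both qualify'' overreaches. The Frobenius norm and the max-entry norm \emph{are} entry-monotone and give the stated bound directly; since all norms on $n\times n$ matrices are equivalent, converting to the operator norm (which is the one actually used in the estimates of Lemma~\ref{quot_const}) costs only a dimensional constant, which is absorbed into $k_5$ and $k_6$ and leaves the conclusion of Theorem~\ref{Franks lemma} unchanged.
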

   
The rest of the solution to the system~\eqref{syst}, for $ n \geq 2$, is given by
  \begin{eqnarray}
 a &=& \alpha - \frac{1}{2} (K\gamma +\gamma K).\label{a}\\ 
  b &=&\frac{1}{2}\,(\beta+\beta^*) + 2\, (Kd+dK),\label{b} \\
  c &=& -\frac{1}{2}\,\gamma ,\label{c} 
  \end{eqnarray}
  That extends the solutions in Remark~\ref{remark SL surface case} for surface to manifolds of arbitrary dimension.
 
Since that  
   \[\dim(\m S(n)^3\times\m S^*(n))=\dim(Sp(n))=n(n^2+1)=2n^2+n,\] we prove that $[d_w F]$ is a linear  isomorphism. Moreover, its norm satisfies the inequality given in the following lemma. 

\begin{lem}
\label{quot_const}
There exist a constant $k_6=k_6(H,c,\mathcal{U})>0$ such that
 \[
 \|Z({t_0})\| = \| d_wF(\xi)\| \geq k_6\ \|\xi\|.   \]
 for all $ \xi\ \in \m S(n)^3\times \m S^*(n) \approx \R^{2n^2+n}$.
\end{lem}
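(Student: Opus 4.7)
The plan is to prove $\|Z(t_0)\|\geq k_6\|\xi\|$ by a chain of four estimates, each peeling off one of the factors in the definition~(\ref{rho and k_6}) of $k_6$. To begin, I reduce to bounding $\|Y(t_0)\|$: since $Z(t_0)=X(t_0)Y(t_0)$ and $\|X(t_0)^{-1}\|\leq k_2$ by~(\ref{k_2}), one has $\|Z(t_0)\|\geq \|Y(t_0)\|/k_2$, which accounts for the outer $k_2$ in the denominator of $k_6$.

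Next I handle $W_{t_0}$, using $\int_0^{t_0}\delta=1$ and $\supp(\delta)\subset[\tau-\lambda,\tau+\lambda]$ to write
\[
W_{t_0}=X^{-1}(\tau)T(\xi)X(\tau)+\int_0^{t_0}\delta(s)\bigl[X^{-1}(s)T(\xi)X(s)-X^{-1}(\tau)T(\xi)X(\tau)\bigr]ds.
\]
The continuity estimate~(\ref{k_3}), together with the identity $X^{-1}(s)-X^{-1}(\tau)=X^{-1}(s)[X(\tau)-X(s)]X^{-1}(\tau)$ and the bound~(\ref{k_2}), show that the integrand has norm at most $2k_2k_3\|T(\xi)\|$. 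Combined with $\|X^{-1}(\tau)T(\xi)X(\tau)\|\geq \|T(\xi)\|/k_2^2$ this gives $\|W_{t_0}\|\geq(k_2^{-2}-2k_2k_3)\|T(\xi)\|$. To compare $Y(t_0)$ and $W_{t_0}$, I redo the integration by parts of~(\ref{W1}) while keeping the truncation factor $h$ in place: the main term becomes $\int\delta\,h\,X^{-1}T(\xi)X\,ds$, which differs from $W_{t_0}$ only by $\int_0^{t_0}\delta(s)(h(s)-1)X^{-1}(s)T(\xi)X(s)\,ds$. By~(\ref{f, rho}) and~(\ref{k_2}) this discrepancy is at most $\rho\,k_2^2\,\|\delta\|_{C^0}\,\|T(\xi)\|$, so $\|Y(t_0)\|\geq(k_2^{-2}-2k_2k_3-\rho k_2^2\|\delta\|_{C^0})\|T(\xi)\|$.

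Finally, I invert the linear map $T:\m S(n)^3\times\m S^*(n)\to\mathfrak{sp}(n)$ using the explicit formulas~(\ref{a}),~(\ref{b}),~(\ref{c}) and Lemma~\ref{Lemma of subjectivity}. The curvature bound~(\ref{k_1}) and the eigenvalue separation~(\ref{k_4}), assembled through the definition~(\ref{k_5}) of $k_5$, yield $\|\xi\|\leq k_5\|T(\xi)\|$; in the case $n=1$ this reduces to Remark~\ref{remark SL surface case}. Stacking the four estimates,
\[
\|Z(t_0)\|\geq \frac{1}{k_2 k_5}\Bigl(\frac{1}{k_2^2}-2k_2k_3-\rho k_2^2\|\delta\|_{C^0}\Bigr)\|\xi\|=k_6\|\xi\|,
\]
which is the required bound.

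The main obstacle will be the comparison $Y(t_0)-W_{t_0}$. A naive $L^\infty$ bound on $\mathcal B/h$ would introduce $\|\delta\|_{C^3}$, whereas $k_6$ involves only $\|\delta\|_{C^0}$. The trick is to redo the integration by parts inside $Y$ while keeping $h$ throughout, and to choose $\tau$ so that $h\equiv 1$ on a neighborhood of $\supp(\delta)$ (achievable because the intersections with the segments $\eta_i$ are finite, whereas the set of admissible $\tau$ given by~(\ref{map h(0,theta)}) has positive measure). Then all correction terms involving $h',h'',h'''$ vanish identically, and only the clean term $\int\delta(h-1)X^{-1}T(\xi)X\,ds$ survives, admitting the desired bound via $\|\delta\|_{C^0}$ and the $L^1$-mass $\rho$ of $1-h$.
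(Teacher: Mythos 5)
Your proof follows exactly the same chain of estimates as the paper: peel off the $k_2$ from $Z=XY$, bound $\|W_{t_0}-Q(\tau)\|$ by $2k_2k_3\|T(\xi)\|$ using the modulus of continuity of the linearized flow, bound $\|Y(t_0)-W_{t_0}\|$ by $\rho\,k_2^2\|\delta\|_{C^0}\|T(\xi)\|$, and finally invert $T$ via Lemma~\ref{Lemma of subjectivity} to get $\|T(\xi)\|\geq\|\xi\|/k_5$. The one place where you go beyond what the paper writes is in justifying the identity $Y(t_0)-W_{t_0}=\int_0^{t_0}(h-1)\delta\,X^{-1}T(\xi)X\,ds$. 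The paper simply asserts this, but it is not automatic: $Y(t_0)=\int X^{-1}\mathcal B X$ with $\mathcal B$ carrying the factor $h$, and when one re-runs the integrations by parts that produced $W_{t_0}$ with $h$ present, the boundary terms vanish but one picks up derivatives of $h$ multiplied against $\delta$; these extra terms are controlled by $\|\delta\|_{C^0}$ only if $h$ is locally constant on $\supp(\delta)$. You correctly identify this, and your proposed fix — arrange that $h\equiv 1$ on a neighborhood of $\supp(\delta)$ — is the right one. Since the transversal intersections $\gamma^{-1}[\cup_i\eta_i]$ form a finite set and the set of admissible times $\{t:\tilde h_n(0,\theta,t)>a_0^2\}$ is open and nonempty (so it contains an interval; for $n=1$ it is all of $[k_0/4,3k_0/4]$), one can indeed choose $\tau$ so that $[\tau-\lambda,\tau+\lambda]$ avoids the intersection points, shrinking $\lambda$ further if necessary, and then build $h$ to equal $1$ on $\supp(\delta)$ while keeping $\int(1-h)<\rho$. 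Your heuristic ``positive measure vs.\ finite set'' would be better phrased as ``contains an open interval vs.\ finite set,'' and one should note that the bookkeeping of constants (choose $\tau$ and $\lambda$ before $h$, and $\rho$ depends only on $\lambda$ and $\|\delta\|_{C^0}$, both independent of $\theta$) goes through. Apart from that minor imprecision, your argument is sound and in fact supplies a justification the paper omits.
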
 

\end{proof}


\begin{proof}

Let $ T: \m S^3(n) \times \m S^*(n) \to \mathfrak{sp}(n)  $ be the linear map given by (\ref{map T}).
First we will estimate  $\| T^{-1}\|$. 
 Recall that if $ \dim(M)=2$, then $ \|d\|=0 $ and by Remark~\ref{remark SL surface case} $ \|b\|=\|\beta\|$.
 For $ n =\dim(M)-1 \geq 2$, observing  that
  \[
  \| \beta\| = \sup_{|u|=|v|=1}\langle \beta\,u,v\rangle
             = \sup_{|u|=|v|=1}\langle u, \beta^* \,v \rangle
         = \|\beta^*\|
  \]
 and from~\eqref{b-b*}, Lemma~\ref{Lemma of subjectivity}, \eqref{k_4} 
  and~\eqref{k_5},
  \begin{equation}\label{Nd}
  \| d\| \leq
  \frac{\big\Vert\frac{\beta-\beta^*}2\big\Vert}
       {\min\limits_{i\ne j}|\lambda_i-\lambda_j|}
  \leq \frac{\|\beta\|}{k_4}
  \leq k_5\,\|\beta\|.
  \end{equation}
   From~(\ref{b}), (\ref{k_1}),  (\ref{Nd}) and~(\ref{k_4}),
  \[
  \| b\|\leq\|\beta\|+ 4\,k_1\,\| d\|
  \leq\left( 1 + 4\,k_1\,k_4^{-1}\right)\,\|\beta\|
  \leq k_5\,\|\beta\|.
  \]
  Also, from (\ref{c}), (\ref{k_4}) and (\ref{a}),
 \[ \| c\|\leq \|\gamma\|\leq k_5\,\|\gamma\|,\]
 \[ \| a\|\leq\|\alpha\|+k_1\,\|\gamma\|
          \leq k_5\,\max\{\|\alpha\|,\|\gamma\| \} \]

  Since
  \[
  \| T(\xi) \|\geq \max\{\|\alpha\|,\|\beta\|,\|\gamma\|\},
  \]
  we get that
  \[
  \|\xi \|:=\max\{\| a\|,\| b\|,\| c\|,\| d\|\}
  \leq k_5\,\| T(\xi)\|.
  \]
  Thus
  \begin{equation}\label{eD}
   \| T(\xi)\| \geq \frac{ 1}{k_5} \| \xi\|.
  \end{equation}

  Write
\[
  Q(s):=X^{-1}(s)\,T(\xi)\,X(s)\qquad \text{ and }\qquad
  P(s):=\delta(s)\;X^{-1}(s)\,T(\xi) \,X(s).
 \]
   Given a continuous map $f:[0,2k_0]\to\R^{2n\times 2n}$, define
   \[
   \m O_\lambda(f,\tau):=\sup_{|s-\tau|\leq\lambda}|f(s)-f(\tau)|.
   \]
   Observe that
   \[
   \m O_\lambda(f g,\tau)
   \leq \| f\|_0\,\m O_\lambda(g,\tau)+\m O_\lambda(f,\tau)\,|g(\tau)|,
   \]
      where $\| f\|_0:=\sup\limits_{s\in[0,2k_0]}\  | f(s)|$.
   We have that
  \begin{eqnarray*}
   \m O_\lambda(Q,\tau) &=&\m O_\lambda(X^{-1}(s)T(\xi) X(s),\tau)
   \leq \\
   & \leq &\| X^{-1}(s)\|_0\,\m O_\lambda(T(\xi) X(s),\tau)
   +\m O_\lambda(X^{-1}(s),\tau)\,\| T(\xi)\|\,\| X(\tau)\| \leq
   \\
   &\leq &\| X^{-1}(s)\|_0\,\|T(\xi)\|\,\m O_\lambda(X(s),\tau)
   +\m O_\lambda(X^{-1}(s),\tau)\,\| T(\xi)\|\,\| X(\tau)\| \leq
   \\
   &\leq & 2 \,k_2k_3\,\|T(\xi)\|.
\end{eqnarray*}   
 
 \[ \| W_{t_0}-Q(\tau)\|
   =\left\|\int_0^{t_0}\delta(s)\ [Q(s)-Q(\tau)]\ ds \right\|
   \leq \m O_\lambda(Q,\tau)
   \leq 2 \,k_2 k_3\,\|T(\xi)\|. \]
   
 \begin{eqnarray*}
   \| Y(t_0)-W_{t_0}\|
   = \left\| \int_0^{t_0}\  [h(s)-1]\ P(s)\ ds\right\|
   &\leq &\| P\|_0 \int_0^{t_0}\  |1-h(s)|\ ds\\
   &\leq & \rho\;\| P\|_0
   \leq \rho\ k_2^2\  \|\delta\|_0 \  \|T(\xi)\|.
 \end{eqnarray*}  

   \[  \| Q(\tau)\|=
   \| X_\tau^{-1}\ T(\xi)\ X_\tau\|
   \geq \frac{1}{k_2^2}\|T(\xi)\|.
   \]
 
   Therefore
   \begin{eqnarray*}
   \| Y(t_0)\|
   &\geq &\| Q(\tau)\|-\| W_{t_0}-Q(\tau)\|-\| Y(t_0)-W_{t_0}\|
   \\
   &\geq &\left(\frac{1}{k_2^2}-2 k_2\ k_3-\rho_0\ k_2^2\|\delta\|_0\right)\|T(\xi)\|.
\end{eqnarray*}  
   Using~(\ref{eD}),   
   \[
 \|Z({t_0})\| = \| X({t_0})\ Y({t_0})\|
   \geq k_2^{-1}\ \| Y({t_0})\|
   \geq \frac{k_2^{-2}-2 k_2\ k_3-\rho\ k_2^2\ \|\delta\|_0}{k_2\ k_5}
   \,\|\xi\|  = k_6\ \|\xi\|.
   \]

\end{proof}

\smallskip

The proof of the following two general lemmas can be seen in \cite{Contreras:2010}.  

 \begin{lem}{\cite[Prop. 7.3]{Contreras:2010}}
 \label{general lemma of open maps}
 Let $ N $ be a smooth connected Riemannian manifold, with $ \dim(N)=m$, and let $ F: R^m \rightarrow N $ be a smooth map such that 
 \begin{equation}
 \label{condition for open maps}
 \| d_x F (v)\| \geq a >0\ \ ,\ \mbox{ for all } \ \ (x,v) \in T\R^m \ \ \mbox{ with } \  
 \|v\|=1 \ \mbox{ and } \ \|x \| \leq r   .
 \end{equation}
 Then for all $ 0< b< ar $,
 \[ \{ w \in N : d ( x, F(0))< b\}  \subset   F ( \{ x \in \R^m : \| x\| < b/a \} ) \]
\end{lem}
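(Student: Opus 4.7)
The plan is to exhibit the desired preimage by lifting a short curve through $F$. The hypothesis forces $d_xF$ to be a linear isomorphism at every $x$ in the closed ball $\overline{B}(0,r)$ (trivial kernel plus matching dimensions), with $\|(d_xF)^{-1}\|\le 1/a$; consequently $F$ restricts to a local diffeomorphism on a neighborhood of $\overline{B}(0,r)$.

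Given $w\in N$ with $d_N(w,F(0))<b$, I would pick a smooth curve $\gamma:[0,1]\to N$ from $F(0)$ to $w$ of length $\ell<b$ (using that $N$ is connected and Riemannian), and define its maximal lift $\tilde\gamma:[0,T)\to\R^m$ starting at $0$, obtained by iterating the local inverse of $F$. The chain rule combined with the lower bound on $d_xF$ yields the pathwise estimate
\[
\|\tilde\gamma(t)\|\le\int_0^t\bigl\|(d_{\tilde\gamma(s)}F)^{-1}\dot\gamma(s)\bigr\|\,ds\le\frac{\ell}{a}<\frac{b}{a}<r,
\]
so the lift stays away from the boundary of $B(0,r)$.

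The main obstacle, and the one non-routine step, is showing that $T=1$. Assuming $T<1$, the finite length estimate forces $\tilde\gamma(t)$ to be Cauchy as $t\to T^-$, hence to converge to some $x^{*}\in\R^m$ with $\|x^{*}\|<r$; since $F$ is a local diffeomorphism near $x^{*}$, its local inverse extends $\tilde\gamma$ past $T$, contradicting maximality. The strict inequality $b<ar$ is exactly what makes this argument work: it guarantees $\ell/a<r$ so that the limit $x^{*}$ lies strictly inside $B(0,r)$. With $T=1$ established, the point $x:=\tilde\gamma(1)$ satisfies $F(x)=w$ and $\|x\|<b/a$, giving the claimed inclusion.
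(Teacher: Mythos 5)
Your proof is correct and follows essentially the same path-lifting (Hadamard--Caccioppoli) argument as the cited source \cite[Prop.~7.3]{Contreras:2010}: the derivative bound yields $\|(d_xF)^{-1}\|\le 1/a$ on $\overline B(0,r)$, a minimizing-length curve from $F(0)$ to $w$ is lifted through $F$, and the length estimate confines the lift to $B(0,b/a)\subset B(0,r)$ while the Lipschitz bound on $\dot{\tilde\gamma}$ lets the maximal lift extend to the full interval. The one place to tighten is the estimate $\|\tilde\gamma(t)\|\le\ell/a$: as stated it already uses $\|(d_{\tilde\gamma(s)}F)^{-1}\|\le 1/a$, which requires $\|\tilde\gamma(s)\|\le r$, so one should make the bootstrap explicit (e.g.\ show the set of $t$ with $\sup_{s\le t}\|\tilde\gamma(s)\|\le\ell/a$ is open and closed in $[0,T)$), but this is a routine continuity argument and does not affect the correctness of the approach.
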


\smallskip

\begin{lem}{\cite[Lemma~7.6]{Contreras:2010}}
\label{normC2}
There exists a constant $ k_7>0$ and a family of  $ C^\infty$-function $\alpha_\epsilon: [-\epsilon,\epsilon]^n\rightarrow \R $  such that $ \alpha_\epsilon(v)=1 $  if  $v \in [-\frac{\epsilon}{4},\frac{\epsilon}{4}]^n $, and  $ \alpha_\epsilon(v)=0 $  if  $v\notin [-\frac{\epsilon}{2},\frac{\epsilon}{2}]^n $, and for any $C^2$ map $ B:[0,1] \to \R^{n\times n } $ the functions  \[ G(t,v):= \alpha_\epsilon(v) v^* B(t) v\] satisfies:

\begin{equation}
\label{k7}
\| G\|_{C^2} \leq k_7 \|B\|_{C^0} + \epsilon k_7 \|B\|_{C^1} + \epsilon^2 k_7 \|B\|
_{C^2},
\end{equation}
with $ k_7$ independent of $ 0 < \epsilon <1 $.
\end{lem}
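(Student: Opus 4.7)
The approach is to construct $\alpha_\epsilon$ by rescaling a single fixed cutoff and then estimate the derivatives of $G$ by hand, balancing the powers of $\epsilon$ that come out of differentiating the cutoff against the powers that come from $v\in\supp(\alpha_\epsilon)$.

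First, I would fix once and for all a smooth $\alpha_1\colon\R^n\to[0,1]$ with $\alpha_1\equiv 1$ on $[-\tfrac14,\tfrac14]^n$ and $\supp(\alpha_1)\subset[-\tfrac12,\tfrac12]^n$, and set $\alpha_\epsilon(v):=\alpha_1(v/\epsilon)$. By the chain rule, $|\partial^\beta\alpha_\epsilon|\leq C_\beta\,\epsilon^{-|\beta|}$ with $C_\beta$ depending only on $\alpha_1$; moreover on the support of $\alpha_\epsilon$ one has $|v_i|\leq\epsilon/2$, hence $|v|^2\leq n\epsilon^2/4$.

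Next I would differentiate $G(t,v)=\alpha_\epsilon(v)\,v^* B(t)\,v$ and estimate each partial derivative up to order two pointwise. The $t$-derivatives only act on $B$, so $|\partial_t^\ell G|\leq C\epsilon^2\|B\|_{C^\ell}$ for $\ell=0,1,2$. The mixed first-order terms $\partial_{v_i}\partial_t^\ell G$ split into $(\partial_{v_i}\alpha_\epsilon)\,v^* B^{(\ell)} v$ and $2\alpha_\epsilon\,(B^{(\ell)}v)_i$, each bounded by $C\epsilon\|B\|_{C^\ell}$ after the cancellations $\epsilon^{-1}\cdot\epsilon^2$ and $1\cdot\epsilon$ respectively. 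A second $v$-derivative produces three types of terms: one with $\partial^2\alpha_\epsilon$ times $v^* B v$, one with $\partial\alpha_\epsilon$ times $Bv$, and one with $\alpha_\epsilon$ times $B_{ij}+B_{ji}$; in each case the factor $\epsilon^{-k}$ from the cutoff cancels against the $\epsilon^k$ from the $v$'s, leaving a bound of the form $C\|B\|_{C^0}$.

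Collecting every estimate and using $\epsilon<1$ yields the inequality with a constant $k_7$ depending only on $n$ and the fixed bump $\alpha_1$. There is no real obstacle here; the entire argument is a careful bookkeeping of powers of $\epsilon$, ensuring that every derivative of the cutoff $\alpha_\epsilon$ is paired with enough factors $v_i$ (of order $\epsilon$ on the support) to cancel the resulting $\epsilon^{-k}$, with only the $t$-derivatives producing the genuinely smaller $\epsilon^\ell\|B\|_{C^\ell}$ contributions that appear on the right-hand side of \eqref{k7}.
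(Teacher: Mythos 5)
Your proof is correct. The paper itself does not reprove this lemma (it cites \cite[Lemma~7.6]{Contreras:2010}), and the argument there is the same one you give: define $\alpha_\epsilon(v)=\alpha_1(v/\epsilon)$ for a fixed bump $\alpha_1$, so each derivative of the cutoff contributes $\epsilon^{-|\beta|}$, observe that on $\supp(\alpha_\epsilon)$ the coordinates satisfy $|v_i|\le\epsilon/2$, and then check that in every term of $\partial^\beta G$ (with $|\beta|\le 2$) the negative powers of $\epsilon$ coming from $\partial\alpha_\epsilon$, $\partial^2\alpha_\epsilon$ are exactly cancelled by the factors of $v$ remaining from the quadratic form, while each $\partial_t$ hits $B$ and leaves the full $\epsilon^2$ from $|v|^2$ intact. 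Your tallying of all the derivative combinations up to total order two (pure $t$, pure $v$, and the mixed $\partial_t\partial_{v_i}$) is complete and yields the claimed bound with $k_7$ depending only on $n$ and the chosen $\alpha_1$.
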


\bigskip

Then, we consider  the map $ G_\epsilon :  \m S(n)^3 \times \m S^*(n) \times \to   \m F^2(\gamma, k_0,W, \mathfrak{F}) $ defined as  
\[ G_\epsilon(w) = \alpha_\epsilon(x_1,\dots x_n)  \ \sum_{i,j=1}^n [\beta(w)(t)]_{ij} 
\ x_i x_j , \]
where $ \beta(w):[0,2k_0] \to \m S(s) $  is from (\ref{definitions of beta(w)(t)}). The following diagram commutes
\[
  \m S(n)^3 \times \m S^*(n) \supset 
  \xymatrix{ {B(0, k_6^{-1} \eta)}\ar[r]^{ G_{\epsilon}\ } 
   \ar[rd]_{F}& \m 
   F^2(\gamma, k_0,W, \mathfrak{F})  \ar[d]^{S_{t_0,\theta} }  \\
& Sp(n)  } 
\]   
for some $ 0<\epsilon \leq \epsilon_1$. 
 It follows from the Proposition~\ref{prop. cota derivada} and  Lemma~\ref{general 
 lemma of open maps},  that
\[
B(S_{t_0}(0),\eta)  \subset   F( B(0, k_6^{-1} \eta))  \subset Sp(n).\]

\bigskip

Therefore, to conclude the proof of the Theorem~\ref{Franks lemma} it remains to prove that  \[G_{\epsilon}(B(0,k_6^{-1}\eta))\subset\m U_0 \subset \m F^2(M).\] 
It follows from
\begin{lem}
\label{computing the norm}
For $\epsilon $ small enough, there exist $ \eta ( \m U_0, c ) $, such that, 
\[
 \|w\|\leq \eta \Rightarrow 
G_{\epsilon}(w) \in \m U_0.\]

\end{lem}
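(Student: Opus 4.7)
The strategy is to apply Lemma~\ref{normC2} directly and then choose $\eta$ small. In the local coordinates of Lemma~\ref{coordinates}, the potential $G_\epsilon(w)$ has precisely the form appearing in Lemma~\ref{normC2}, namely
\[
G_\epsilon(w)(x_0, x_1, \dots, x_n) = \alpha_\epsilon(v)\, v^* B(x_0)\, v ,
\]
where $v = (x_1,\dots,x_n)$ and $B(t) := \beta(w)(t) = h(t)\bigl[a\,\delta(t) + b\,\delta'(t) + c\,\delta''(t) + d\,\delta'''(t)\bigr]$. Thus the plan reduces to bounding $\|B\|_{C^k}$ for $k=0,1,2$ linearly in $\|w\|$, and then using Lemma~\ref{normC2}.

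First I would estimate the $C^k$ norms of $B$. By the product rule,
\[
\|B\|_{C^k} \leq C'_k\, (\|a\|+\|b\|+\|c\|+\|d\|) \leq 4C'_k\,\|w\|,
\]
where $C'_k$ depends only on the $C^{k+3}$ norm of $\delta$ and the $C^k$ norm of $h$. Crucially, the $C^5$ norm of $\delta$ is controlled uniformly by $\mathcal U_0$ (independently of $\tau(\theta,\mathcal U_0)$), and once $\gamma$ and $\mathfrak F$ are fixed, $h$ has a fixed $C^5$ norm; therefore $C'_0, C'_1, C'_2$ are constants depending only on $(H,c,\mathcal U,\gamma,\mathfrak F)$. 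Applying Lemma~\ref{normC2} then yields
\[
\|G_\epsilon(w)\|_{C^2} \leq k_7\bigl(4C'_0 + \epsilon\, 4C'_1 + \epsilon^2\, 4C'_2\bigr)\,\|w\| =: C(\epsilon)\,\|w\|.
\]
This is the $C^2$ norm in the local coordinates of Lemma~\ref{coordinates}; since $G_\epsilon(w)$ is supported in the fixed tubular neighborhood $W$ and the coordinate change to the ambient Riemannian metric is smooth and bounded on $W$, the $C^2$-norm on $M$ is dominated by $C''\cdot C(\epsilon)\,\|w\|$ for some constant $C''=C''(W)$.

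Next I would verify the support conditions that guarantee $G_\epsilon(w)\in\mathcal F^2(\gamma,k_0,W,\mathfrak F)$. By construction $\beta(w)(t)$ vanishes whenever $h(t)=0$, so in particular outside $\mathrm{supp}(h)\subset(0,2k_0)\setminus\gamma^{-1}[\cup\eta_i]$. Choosing $\epsilon \leq \epsilon_1$ ensures that $\mathrm{supp}(\alpha_\epsilon)\subset[-\epsilon_1,\epsilon_1]^n$, so by the choice of $\epsilon_1$ the support of $G_\epsilon(w)$ is disjoint from each $\eta_i$. The vanishing conditions $G_\epsilon(w)(\gamma(t))=0$, $d_{\gamma(t)}G_\epsilon(w)=0$ are immediate from the quadratic factor $v^*B(t)v$ in the coordinates $(x_1,\dots,x_n)$.

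Finally, since $\mathcal U_0$ is an open $C^2$-neighborhood of $0\in\mathcal F^2(M)$, fix $r=r(\mathcal U_0)>0$ with $\{u\in\mathcal F^2(M):\|u\|_{C^2}<r\}\subset\mathcal U_0$. Set
\[
\eta := \frac{r}{2\,C''\,C(\epsilon)}.
\]
Then $\|w\|\leq \eta$ implies $\|G_\epsilon(w)\|_{C^2}<r$, hence $G_\epsilon(w)\in\mathcal U_0$, as required. The only real obstacle here is administrative: ensuring all constants are truly independent of the point $\theta\in E_L^{-1}(c)$ and the time $\tau$, which follows from the explicit uniformity claims already established for $\delta$ and from the a priori fixed choice of $h$, $W$ and $\mathfrak F$.
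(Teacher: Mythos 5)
Your strategy---apply Lemma~\ref{normC2} to bound $\|G_\epsilon(w)\|_{C^2}$ linearly in $\|w\|$ and then choose $\eta$ small---is the same one the paper uses, and the individual estimates are correct. What is missing is the uniformity of $\eta$. The lemma asserts $\eta=\eta(\m U_0,c)$, independent of $\gamma$, $W$, $\mathfrak F$ and $\tau$, and this uniformity is exactly what makes $\delta$ in Theorem~\ref{Franks lemma} independent of the orbit. The paper secures it by treating the three terms of Lemma~\ref{normC2} separately: in~\eqref{eta} it fixes $\eta$ using only the $\|\beta(w)\|_{C^0}$ term, which is controlled by $\|\delta\|_{C^3}$ and $\|h\|_{C^0}\leq 1$, both uniform over $\gamma,\mathfrak F,\tau$; only afterwards, in~\eqref{epsilon-1}, does it shrink $\epsilon_2$ to absorb the $\epsilon\|\beta(w)\|_{C^1}$ and $\epsilon^2\|\beta(w)\|_{C^2}$ terms, which involve $\|h\|_{C^1}$, $\|h\|_{C^2}$ and therefore genuinely depend on $\gamma$ and $\mathfrak F$. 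Your proof bundles all three into $C(\epsilon)$ and sets $\eta=r/(2C''C(\epsilon))$; since $C(\epsilon)$ involves $C'_1,C'_2$ (hence $\|h\|_{C^1},\|h\|_{C^2}$, hence $\gamma,\mathfrak F$) and $C''$ depends on $W$, the resulting $\eta$ is not of the required form $\eta(\m U_0,c)$. You even record that $C'_0,C'_1,C'_2$ depend on $\gamma$ and $\mathfrak F$, but you do not isolate the one constant $C'_0$ that does not (because $\|h\|_{C^0}\leq 1$) and build $\eta$ from it alone. The fix is precisely the paper's two-step choice: fix $\eta$ from the $C'_0$ term first, then pick $\epsilon$ (which may depend on $\gamma,W,\mathfrak F$) to make the remaining terms contribute at most $r_0/2$.
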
  

\begin{proof}
  
Let $ 0< r_0= r_0(\m U_0) < 1 $ be  such that 
\begin{equation}
\label{epsilon-0}
\|u\|_{C^2} < r_0 \Rightarrow u \in \m U_0,
\end{equation}
and let $ \eta = \eta(  \m U_0 )$ be such that 
\begin{equation}
\label{eta}
4 \eta k_6^{-1}k_7 \|\delta\|_{C^3 } < \frac{r_0}{2}.
\end{equation}
and let  $ \epsilon_2 = \epsilon_2(\m U_0, \gamma, W) \leq \epsilon_1 $ be such 
that 
\begin{equation}
\label{epsilon-1}
k_6^{-1} \eta ( 8 k_7\epsilon_2 \|h\|_{C^1}\|\delta\|_{C^4} + 16 k_7 \epsilon_2^2
\|h\|_{C^2}\|\delta\|_{C^5} ) < \frac{r_0}{2}.
\end{equation}

So, if $ \|w\| < k_6^{-1}\eta $,\ $ \beta(w)(t) $ from (\ref{definitions of beta(w)(t)}), and $ \epsilon \leq  \epsilon_2 $, then, by Lemma~\ref{normC2},
\begin{eqnarray*}
\|G_{\epsilon}(w)\|_{C^2} &\leq& k_7 \| \beta(w)\|_{C^0} + \epsilon k_7 \|\beta(w)\|
_{C^1} + \epsilon^2 k_7 \|\beta(w)\|_{C^2} \\
& \leq &  k_7\, 4\, k_6^{-1}\eta \|\delta\|_{C^3 } + k_7 \epsilon_2 \,4\,k_6^{-1} \eta \left(2\|h\|
_{C^1}\|\delta\|_{C^4}\right) + \\
&+&  k_7\epsilon_2^2 \,4\,k_6^{-1} \eta\left(2^2\|h\|_{C^2}\|\delta\|_{C^5}\right) \\
& \leq & \frac{r_0}{2}+ \frac{r_0}{2} 
\end{eqnarray*}
where the last inequality is from (\ref{eta}) and (\ref{epsilon-1}). Then, by 
(\ref{epsilon-0}), $ G_\epsilon( w) \in \m U_0$.

\end{proof}

\section{Lagrangian Flow with infinitely many periodic orbits in an energy level}

In this section, we will prove the Theorem~\ref{T_maim-u}. The proof is based on Ma\~n\'e's techniques about the theory of stable hyperbolicity, developed for the proof of the stability conjecture in \cite{Mane:1982}. 
This technique
requires the use of a suitable version of Franks' lemma.

\smallskip

Let $L: TM\rightarrow\mathbb{R}$ be a Tonelli Lagrangian on a closed manifold $ M$  and $ c > e_0(L)$.
Let  $ \m H(L,c) $ be the set of smooth potentials  $u:M \to \R$ such that all closed orbits of the flow \[ \left.\phi_t^{L-u} \right|_{  E_{L-u}^{-1}(c)} : E_{L-u}^{-1}(c) \rightarrow E_{L-u}^{-1}(c) \] are hyperbolic . We denote by   $ \mbox{\rm int}_{C^2} \m H(L,c) $  the interior of  $ \m H(L,c) $ in the $C^2$-topology.
 For each  $ u \in  \mbox{\rm int}_{C^2} \m H(L,c) $,  let  $Per(L,u,c) \subset  E_{L-u}^{-1}(c) $ be the union of all periodic orbits  of the flow 
 $ \left.\phi_t^{L-u} \right|_{  E_{L-u}^{-1}(c)}$.

 By definition, $\overline{Per(L,u,c)} \subset E_{L-u}^{-1}(c) $ is a compact and invariant subset. We will prove the following theorem:

\begin{thm}\label{t-per-hip}
If $u\in \mbox{\rm int}_{C^2} \m H(L,c) \cap\m G(n,L,c)$, then the set \[\overline{Per(L,u,c)}\] is a hyperbolic set.
\end{thm}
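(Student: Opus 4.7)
The proof I would attempt follows Ma\~n\'e's classical strategy for showing that the closure of hyperbolic periodic orbits is uniformly hyperbolic, adapted to the Tonelli Lagrangian/symplectic setting where the Franks-type perturbation tool is Theorem~\ref{Franks lemma}. Arguing by contradiction, suppose $\overline{Per(L,u,c)}$ is not a hyperbolic set for $\phi_t^{L-u}$ restricted to $E_{L-u}^{-1}(c)$. Since the flow direction $\langle X^{L-u}\rangle$ is always neutral, I would quotient it out and work with the linear Poincar\'e flow on the transverse symplectic bundle $N$; the goal is to produce a continuous, $d\phi_t^{L-u}$-invariant splitting $N=N^s\oplus N^u$ over $\overline{Per(L,u,c)}$ whose summands are uniformly contracted and expanded, respectively.

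The first step is to establish a uniform dominated splitting. At each hyperbolic periodic orbit $\theta\in Per(L,u,c)$ we have the canonical splitting $N^s(\theta)\oplus N^u(\theta)$, and I need to check that the angles between these subspaces and their domination ratios are bounded away from degeneration along $Per(L,u,c)$. If they are not, there is a sequence $\gamma_n\in Per(L,u,c)$ along which domination fails arbitrarily badly. Picking an arc of $\gamma_n$ of length in $[k_0,2k_0]$ and invoking Theorem~\ref{Franks lemma}, I can realize any symplectic matrix in the ball $B(d_\theta P_{t_0},\delta)\subset Sp(n)$ as the new linearized Poincar\'e map via a potential $v\in\m F^2(\gamma_n,k_0,W,\mathfrak F)$ of arbitrarily small $C^2$-norm; choosing the family $\mathfrak F$ as the other arcs of $\gamma_n$ visiting $W$ guarantees the full closed orbit is still a closed orbit of $L-u-v$. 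A standard linear-algebraic perturbation on this Poincar\'e map mixes the candidate stable and unstable directions and pushes an eigenvalue onto the unit circle, producing a non-hyperbolic closed orbit of $L-u-v$ and contradicting $u\in\mbox{\rm int}_{C^2}\m H(L,c)$. The hypothesis $u\in\m G(n,L,c)$, i.e.\ $\Phi_n(L-u,c)>0$, is exactly what makes the Franks' radius $\delta=\delta(L-u,c,\m U)$ uniform in $\theta$ and in the chosen arc.

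The second step upgrades the dominated splitting to a genuine hyperbolic splitting. If, say, $N^s$ is not uniformly contracted, Ma\~n\'e's arguments on the asymptotic Lyapunov behaviour along periodic orbits produce $\gamma\in Per(L,u,c)$ whose largest Lyapunov exponent along $N^s$ is arbitrarily close to $0$. I then concatenate finitely many disjoint Franks perturbations along $\gamma$, each of them in the corresponding $\m F^2(\gamma,k_0,W,\mathfrak F)$ and of $C^2$-norm a small fraction of the distance from $u$ to the boundary of $\m H(L,c)$. Under the domination obtained in Step 1 one can arrange, using Theorem~\ref{Franks lemma} in each block, that the resulting linearized return map of $\gamma$ under $L-u-v$ has an eigenvalue of modulus one; the symmetric argument handles $N^u$. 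This again contradicts $u\in\mbox{\rm int}_{C^2}\m H(L,c)$, so $\overline{Per(L,u,c)}$ must already be hyperbolic.

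The main obstacle is the quantitative interplay between the Franks' radius and the Ma\~n\'e estimates. The allowed perturbations live in the symplectic group $Sp(n)$ and must be localized in a small tubular neighborhood $W$ of a chosen orbit arc; the potential $v$ has to (i) sit in $\m F^2(\gamma,k_0,W,\mathfrak F)$ so as not to disturb any other arc crossing $W$, (ii) realize the precise symplectic deformation needed to kill domination or hyperbolicity, and (iii) remain in the fixed $C^2$-neighborhood on which $u+v\in\m H(L,c)$. Calibrating Ma\~n\'e's domination and hyperbolicity thresholds for the family $Per(L,u,c)$ against the uniform perturbation radius $\delta(L,c,\m U)$ furnished by Theorem~\ref{Franks lemma}, while respecting the symplectic constraint, is the delicate bookkeeping at the heart of the argument.
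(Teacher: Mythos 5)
Your proposal is correct in spirit and uses the same essential tools, but it routes the argument differently from the paper. The paper does not directly build a dominated splitting and then upgrade it: instead, it cuts each closed orbit $\alpha\in Per(L,u,c)$ into arcs of length $t_0\in(k_0,2k_0]$, forms the resulting family $\xi(u)=\{\xi^\alpha_{i,u}\}$ of periodic sequences in $Sp(n)$, and proves a single lemma (Lemma~\ref{linear h-stability}) asserting that this family is \emph{stably hyperbolic} in the sense of Ma\~n\'e — namely, if some sequence could be $\delta$-perturbed (entrywise, inside $Sp(n)$) to become non-hyperbolic, Theorem~\ref{Franks lemma} would realize each perturbed block $\eta_i$ by a potential $\tilde u_i\in\m F^2(\gamma,k_0,W,\mathfrak F)$, and the sum $\tilde u=\sum\tilde u_i$ would give a potential near $u$ whose flow has a non-hyperbolic closed orbit, contradicting $u\in\mbox{\rm int}_{C^2}\m H(L,c)$. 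From there the paper invokes Theorem~\ref{Contreras2010_T8.1} (stably hyperbolic families of bounded periodic symplectic sequences are uniformly hyperbolic) as a black box and then passes to the closure by continuity. Your two-step scheme — first forcing a uniform dominated splitting by mixing $N^s$ and $N^u$, then forcing uniform contraction/expansion by concatenating block perturbations — is precisely what the proof of Theorem~\ref{Contreras2010_T8.1} does internally, so you are effectively re-deriving that theorem inline. Both routes hinge on exactly the same two pillars you identify: (i) the uniform Franks radius $\delta(L,c,\m U)$, which is what $u\in\m G(n,L,c)$, i.e.\ $\Phi_n>0$, is needed for, and (ii) the use of $\m F^2(\gamma,k_0,W,\mathfrak F)$ to avoid disturbing the other arcs of the orbit passing through $W$. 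The paper's factorization through the abstract sequence-space theorem is more modular and hides the angle/Lyapunov bookkeeping; your inline version is self-contained but would require spelling out the quantitative Ma\~n\'e estimates that the cited theorem already packages. One small point worth making explicit in your write-up: the paper first proves the localized version over $Per(L,u,c,U)$ (Proposition~\ref{t-per-hip-loc}) and then deduces the global statement, which keeps the tubular-neighborhood and $\mathfrak F$ bookkeeping cleaner.
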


\smallskip
 
Let us to  that the flow $ \left.\phi_t^{L-u} \right|_{E_{L-u}^{-1}(c)} $ has infinitely many    hyperbolic closed orbits.
Standard arguments of dynamical systems \cite{Katok_Hasselblatt:1995} imply that the set $\overline{Per(L,u,c)}$ is locally maximal. Hence, it follows by Smale's Spectral Decomposition Theorem \cite{Smale:1967} (see too \cite{Katok_Hasselblatt:1995}) that $\overline{Per(L,u,c)}$ has a non-trivial basic set. Then, it follows: 

\begin{cor}
\label{c-per-hip} 
Let  $u\in \mbox{\rm int}_{C^2} \m H(L,c) \cap\m G(n,L,c)$ be such that  the flow $ \left.\phi_t^{L-u} \right|_{E_{L-u}^{-1}(c)} $ has infinitely many    hyperbolic closed orbits. Then $\Lambda:=\overline{Per(L,u,c)}$ has a non-trivial basic set. In particular $\left.\phi_t^{L-u}\right|_{E_{L-u}^{-1}(c)}$ has positive topological entropy.
\end{cor}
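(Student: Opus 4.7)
The plan is to deduce the corollary from Theorem~\ref{t-per-hip} combined with the classical structure theory of hyperbolic sets for flows. Theorem~\ref{t-per-hip} provides the essential nontrivial content: $\Lambda:=\overline{Per(L,u,c)}$ is a compact, invariant, hyperbolic set for $\phi_t^{L-u}|_{E_{L-u}^{-1}(c)}$. Since $c>e_0(L-u)$, this energy level has no fixed points, so the hyperbolic splitting genuinely contains the one-dimensional flow direction, and every orbit of $\phi_t^{L-u}|_\Lambda$ is regular.

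The key intermediate step is to establish that $\Lambda$ is locally maximal. Because $\Lambda$ equals the closure of the periodic orbits contained in it, the Anosov closing lemma together with the shadowing lemma endow $\Lambda$ with a local product structure, and a compact hyperbolic invariant set with local product structure is locally maximal (cf.\ \cite[\S 18]{Katok_Hasselblatt:1995}). Local maximality allows us to invoke Smale's Spectral Decomposition Theorem for $\phi_t^{L-u}|_\Lambda$: its nonwandering set decomposes as a finite disjoint union of topologically transitive (basic) sets. Since periodic orbits are dense in $\Lambda$, the nonwandering set of $\phi_t^{L-u}|_\Lambda$ coincides with $\Lambda$, yielding a decomposition $\Lambda=\Lambda_1 \sqcup \cdots \sqcup \Lambda_N$ into hyperbolic basic sets. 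If each $\Lambda_i$ were a single closed orbit, $\Lambda$ would contain only finitely many periodic orbits, contradicting the hypothesis that the flow has infinitely many closed orbits in $E_{L-u}^{-1}(c)$; therefore some $\Lambda_{i_0}$ is nontrivial.

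To conclude, a nontrivial hyperbolic basic set of a smooth flow supports positive topological entropy by Bowen's analysis of basic sets \cite{Bowen:1972,Bowen:73}, and monotonicity of entropy under restriction to invariant subsets gives
\[
h_{top}(L-u,c)\;\geq\;h_{top}\!\left(\phi_t^{L-u}\big|_{\Lambda_{i_0}}\right)\;>\;0.
\]
The only analytic input beyond Theorem~\ref{t-per-hip} is the local product structure / local maximality of $\Lambda$, which is the main technical point but is a well-documented consequence of the closing and shadowing lemmas for hyperbolic sets of flows; the remaining steps are applications of standard hyperbolic theory.
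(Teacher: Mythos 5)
Your proof is correct and follows essentially the same route as the paper: invoke Theorem~\ref{t-per-hip} to get hyperbolicity of $\Lambda=\overline{Per(L,u,c)}$, deduce local maximality (the paper simply cites ``standard arguments'' from \cite{Katok_Hasselblatt:1995}, which you unpack as local product structure via closing and shadowing), apply Smale's spectral decomposition, rule out that every basic piece is a single closed orbit using the infinitely-many-orbits hypothesis, and conclude positive entropy from Bowen's results on nontrivial basic sets. Your write-up is more explicit than the paper's but does not differ in strategy.
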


\bigskip

Let us recall  some definitions  that will be useful  to prove the 
Theorem~\ref{t-per-hip}. 

\smallskip

We say that a linear map $T: \R^{2n} \rightarrow \R^{2n} $ is {\it hyperbolic} if $ T $  has no eigenvalue of norm equal to 1. The {\it stable and unstable subspaces of $T$} are defined as \[ E^s(T) = \left\{v \in \R^{2n};\ \lim_{n \rightarrow \infty }
T^n(v) =0 \right\}\ \mbox{ and } \  E^u(T) = \left\{v \in \R^{2n};\
\lim_{n \rightarrow \infty } T^{-n}(v) =0 \right\},\]
respectively.

Let $Sp(n)$ be the group of symplectic linear isomorphisms of $\R^{2n}$. We say that a sequence  $ \xi: \Z \rightarrow Sp(n)$ is {\it periodic} if there is $ n_0\geq 1 $ such that $ \xi_{i+n_0}= \xi_i$, for all $ i \in \Z$. We say that a periodic sequence $\xi$ is {\it hyperbolic} if  the linear map $ \prod_{i=0}^{n_0-1}
\xi_i $ is hyperbolic. In this case, we denote the stable and
unstable subspaces of $ \prod_{i=0}^{n_0-1} \xi_{j+i} $  by $
E_j^s(\xi) $ and $ E_j^u(\xi) $, respectively.

We say that a parametrized family $\xi=\{\xi^{\alpha}\}_{\alpha\in\mathcal{A}}$ of sequences in $Sp(n)$ is {\it bounded} if there exist $Q>0$ such that $||\xi^{\alpha}||<Q$ 
  for each $\alpha\in\mathcal{A}$ and $i\in\Z$.

 Given two families  of periodic sequences  $\xi^{\alpha} = \{\xi^{\alpha}: \Z \rightarrow Sp(n);\ \alpha \in \mathcal A \} $
and $ \eta^{\alpha}=\{ \eta^{\alpha}: \Z \rightarrow Sp(n);\
\alpha \in \mathcal A \}$, we say that they are {\it periodically equivalent} if they have the same index  set  $  \mathcal A $ and the minimal period of $ \xi^{\alpha} $ and $\eta^{\alpha} $ coincide, for each $\alpha \in \mathcal A $. Given two periodic equivalent families of periodic sequences in $Sp(n)$, $\xi=\{\xi^{\alpha}\}_{\alpha\in\mathcal{A}}$ and $\eta=\{\eta^{\alpha}\}_{\alpha\in\mathcal{A}}$, we define:
\[ d(\xi,\eta) =
\sup\{ \| \xi_n^{\alpha}- \eta_n^{\alpha}\| ;\
\alpha \in \mathcal A, \ n \in \Z \}.\]

We say that a family $ \xi^{\alpha} $ is a {\it hyperbolic } if for each $\alpha\in\mathcal{A}$, the periodic sequence $\xi^{\alpha}$ is hyperbolic. We say that a hyperbolic periodic family is {\it stably hyperbolic}
if there is $\epsilon > 0$ such that  any family $\eta$ periodically equivalent to $\xi$ satisfying  $ d
(\xi,\eta)< \epsilon $ is also hyperbolic.

Finally, we say that a family of periodic sequences $\xi$ is a {\it uniformly hyperbolic} if 
there exist $K>0,\,0<\lambda<1$ and invariant subspaces $E_i^s(\xi^{\alpha}),\,E_i^u(\xi^{\alpha})$, $\alpha\in\mathcal{A},\,i\in\Z$, such that
\[\xi_j(E^{\tau}_j(\xi^\alpha))=E^{\tau}_{j+1}(\xi^\alpha)\mbox{ for all }\alpha\in\mathcal{A},\,j\in\Z,\,\tau\in\{s,u\}\] and
\[\left|\left|\prod_{i=0}^{m-1}
\xi_{i+j}^{\alpha}\bigg|_{E_j^s(\xi^{\alpha})}\right|\right|
<K\lambda^m\mbox{ and } \left|\left|\left(\prod_{i=0}^{M}
\xi_{i+j}^{\alpha}\bigg|_{E_j^u(\xi^{\alpha})}\right)^{-1}\right|\right|<K\lambda^m,\] for all $\alpha\in\mathcal{A},\,j\in\Z,\,m\in\N.$ Observe that in this case, the sequence $\xi$ is hyperbolic and the subspaces $E^s_i(\xi^{\alpha})$, $\E^u_i(\xi^{\alpha})$ necessarily coincide with the stable and unstable subspaces of the map $\prod_{j=0}^{m-1}\xi_{i+j}^{\alpha}$.

In \cite{Contreras:2010}, G. Gontreras proved that:

\begin{thm}[\cite{Contreras:2010}, Theorem 8.1]\label{Contreras2010_T8.1}
If $\xi^{\alpha}$ is a stably hyperbolic family of periodic sequences of bounded symplectic linear maps then it is uniformly hyperbolic
\end{thm}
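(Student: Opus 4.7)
The plan is to proceed by contradiction, adapting Mañé's analysis of linear cocycles from the stability conjecture~\cite{Mane:1982} to the symplectic periodic setting. Assume $\xi=\{\xi^{\alpha}\}$ is stably hyperbolic but not uniformly hyperbolic. I would reduce the conclusion to two uniform claims holding with constants independent of $\alpha\in\mathcal{A}$ and $i\in\Z$: (A) the angle between $E^s_i(\xi^{\alpha})$ and $E^u_i(\xi^{\alpha})$ is bounded below by some $\theta_0>0$; and (B) the splitting is uniformly dominated, i.e.\ there exist $C>0$, $\mu\in(0,1)$ with
\[
\frac{\bigl\Vert\bigl(\prod_{k=0}^{m-1}\xi_{i+k}^{\alpha}\bigr)\big|_{E^s_i(\xi^{\alpha})}\bigr\Vert}{\mathrm{m}\bigl(\bigl(\prod_{k=0}^{m-1}\xi_{i+k}^{\alpha}\bigr)\big|_{E^u_i(\xi^{\alpha})}\bigr)}\le C\mu^m \quad (m\ge 1),
\]
where $\mathrm{m}(\cdot)$ denotes the conorm. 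Once (A) and (B) hold, a Pliss-type argument combined with the hyperbolicity of each monodromy $\prod_{i=0}^{n_0(\alpha)-1}\xi_i^{\alpha}$ upgrades domination to uniform exponential rates on $E^s$ and $E^u$ with constants independent of $\alpha$, which is exactly uniform hyperbolicity.

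To prove (A), suppose for contradiction there exist $\alpha_n\in\mathcal{A}$, $i_n\in\Z$ with $\angle\bigl(E^s_{i_n}(\xi^{\alpha_n}),E^u_{i_n}(\xi^{\alpha_n})\bigr)\to 0$. Because each monodromy is symplectic and hyperbolic, $E^s_{i_n}$ and $E^u_{i_n}$ are Lagrangian. At such an index I would choose a Darboux frame adapted to the Lagrangian pair and replace $\xi_{i_n}^{\alpha_n}$ by $R\cdot\xi_{i_n}^{\alpha_n}$, where $R\in Sp(n)$ is a small symplectic rotation in this frame. Since the angle is small, $R$ with $\Vert R-I\Vert$ arbitrarily small suffices to rotate a unit vector of $E^u$ into $E^s$; the modified monodromy then has an invariant direction whose associated eigenvalue has unit modulus, contradicting stable hyperbolicity. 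For (B), a similar but more delicate perturbation works: if the ratio above fails to decay uniformly, then over some long stretch of indices the cocycle approximately preserves a mixed stable-unstable direction, and inserting a small symplectic correction at a carefully chosen index in that stretch promotes the near-invariance into a genuine invariant direction with modulus-$1$ eigenvalue.

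The main obstacle is making the perturbation argument uniform in the period $n_0(\alpha)$, which may be arbitrarily large. A naive one-step rotation perturbs only one factor of a long product and its effect may be overwhelmed by the surrounding contraction/expansion. The remedy, following Liao--Mañé, is a Pliss-type selection: inside each period, a uniform positive density of indices satisfy one-step contraction-and-expansion estimates of controlled size, and the perturbation is performed at such a \emph{good} index, so that the new monodromy concentrates its dynamics around the rotated direction with uniform control. Carrying out this selection in the symplectic category, with the constraints that perturbations remain in $Sp(n)$ and respect the periodic-equivalence requirement, is where the real work lies.

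Finally, translating back to the parametrized statement is bookkeeping: the subbundles $E^s_i(\xi^{\alpha})$ and $E^u_i(\xi^{\alpha})$ are defined at every index via the stable and unstable eigenspaces of the monodromy based at $i$, and uniform domination together with uniform hyperbolic rates on each monodromy yield the constants $K$ and $\lambda$ in the definition of uniform hyperbolicity.
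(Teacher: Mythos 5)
The paper cites this statement from \cite{Contreras:2010} (Theorem~8.1) without reproducing its proof, so there is no in-paper argument to compare against; I will evaluate your outline on its own terms against the Ma\~n\'e--Contreras strategy it invokes.

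Your reduction to a uniform angle bound (A) plus uniform domination (B), each established by small in-period symplectic perturbations that would otherwise produce a non-hyperbolic monodromy, is the right skeleton. Two remarks, one structural and one substantive. Structurally, the ``Pliss-type upgrade'' from (A)$+$(B) to uniform rates is unnecessary in the symplectic category: since $E^s_j$ and $E^u_j$ are transverse Lagrangian subspaces, the symplectic form gives a perfect pairing between them, and in a basis adapted to this pairing one has $\xi_j|_{E^s_j}=\bigl((\xi_j|_{E^u_j})^{*}\bigr)^{-1}$, so that (in that basis) $\Vert\prod\xi|_{E^s}\Vert\cdot\mathrm{m}\bigl(\prod\xi|_{E^u}\bigr)=1$. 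Hence the domination quotient in (B) is, up to a factor controlled by the angle bound (A), equal to $\Vert\prod\xi|_{E^s}\Vert^{2}$, and (A)$+$(B) already yield
\[
\Bigl\Vert\prod_{k=0}^{m-1}\xi^{\alpha}_{j+k}\big|_{E^s_j}\Bigr\Vert\le K\,\mu^{m/2}
\]
directly, with no Pliss selection and no separate use of the hyperbolicity of each monodromy. Flagging this matters because the Pliss step is precisely the part you single out as ``where the real work lies'', and in fact it is not work that needs doing.

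Substantively, what \emph{does} need doing --- the perturbation lemmas behind (A) and (B), with perturbations that stay in $Sp(n)$, keep the same period, and remain small uniformly in the (possibly unbounded) period $n_0(\alpha)$ --- is exactly the content of Contreras's argument, and your proposal only gestures at it. Your description of (A) is on the right track but imprecise: a small rotation taking a unit vector of $E^u$ to $E^s$ does not literally produce ``an invariant direction with a unit-modulus eigenvalue''; rather, one checks (already visible in the $Sp(1)$ computation, where the rotated monodromy has trace $2\cos\theta/\mu<2$) that the perturbed monodromy becomes elliptic or parabolic, hence non-hyperbolic, and in higher rank one must first isolate an appropriate $\xi$-invariant symplectic $2$-plane before rotating. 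The perturbation for (B), which is equivalent to the uniform contraction itself by the duality above, is even less specified: ``inserting a small symplectic correction at a carefully chosen index'' is a heading, not an argument, and the uniformity in $n_0(\alpha)$ is precisely what makes it nontrivial. As written the proposal is an honest plan with the correct geometry, but the two perturbation lemmas that carry the entire weight of the theorem are left unproved, so there is a genuine gap.
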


\subsection{Proof of theorem \ref{t-per-hip}}\

Let  $c>e_0(L)$ and let  $U\subset M$ be an open set. For any   potential  $ u \in \m F^2(M)$, 
we denote by  $\m P(L,u,c,U)$  the set of all closed orbit $\phi_t^{L-u}(\theta)$ in $E_{L-u}^{-1}(c)$ such that $\gamma(t)=\pi\circ\phi_t^{L-u}(\theta) $ are entirely contained in $U$ and  we define \[Per(L,u,c,U):=\displaystyle\bigcup_{\phi_t^{L-u}(\theta)\ \in \ \m P(L,u,c,U)}\{ \phi_t^{L-u}(\theta) : t \in \R\},\] 
\[\m H(L,c,U):=\{u\in\m F^2(M)\,;\,\,\mbox{all  orbit in } Per(L,u,c,U)\mbox{ is hyperbolic }\}.\]

\begin{prop}
\label{t-per-hip-loc}
If $u\in \mbox{\rm int}_{C^2}\m H(L,c,U) \cap\m G(n,L,c)$, then the set \[\Lambda:=\overline{Per(L,u,c,U)}\] is a hyperbolic set.
\end{prop}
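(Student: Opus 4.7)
The plan follows Mañé's strategy from his proof of the stability conjecture \cite{Mane:1982}, now available in our setting thanks to the infinitesimal Franks' Lemma (Theorem \ref{Franks lemma}). The idea is to encode closed orbits as periodic sequences of symplectic linear maps, verify that the resulting family is stably hyperbolic in the sense recalled in the excerpt, apply Theorem \ref{Contreras2010_T8.1} to promote stable to uniform hyperbolicity, and extend the splitting from $Per(L,u,c,U)$ to its closure $\Lambda$ by continuity and compactness.

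For the encoding, fix $k_0 = k_0(L-u,c)$ from Corollary \ref{k_0}. For each closed orbit $\alpha \in \m P(L,u,c,U)$ of period $T_\alpha$, choose a partition $0 = s^\alpha_0 < \cdots < s^\alpha_{N_\alpha} = T_\alpha$ with consecutive differences in $[k_0,2k_0]$, transverse sections $\Sigma^\alpha_i$ at $\theta^\alpha_i := \phi^{L-u}_{s^\alpha_i}(\theta_\alpha)$, and via Darboux coordinates identify each linearized Poincaré map with an element $\xi^\alpha_i \in Sp(n)$. Extending $N_\alpha$-periodically produces the family $\xi = \{\xi^\alpha\}_\alpha$, which is bounded because the energy level is compact and the time steps lie in $[k_0,2k_0]$.

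The core step is stable hyperbolicity. Fix a $C^2$-neighborhood $\m V$ of $0 \in \m F^2(M)$ with $u + \m V \subset \mbox{\rm int}_{C^2} \m H(L,c,U) \cap \m G(n,L,c)$. Since $u \in \m G(n,L,c)$ gives $\Phi_n(L-u,c) > 0$, Theorem \ref{Franks lemma} applies to $L-u$ with a uniform radius $\delta = \delta(L-u,c,\m V) > 0$ valid on every segment. Given a periodically equivalent $\eta$ with $d(\xi,\eta) < \delta$ and any $\alpha$, apply Theorem \ref{Franks lemma} segment by segment in pairwise disjoint tubular neighborhoods $W^\alpha_i$ of the simple (by Corollary \ref{k_0}) projected segments $\gamma^\alpha_i$, taking $\mathfrak{F}^\alpha_i$ to include any other segment of $\alpha$ or of any orbit in $\m P(L,u,c,U)$ that crosses $W^\alpha_i$ transversally. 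This produces potentials $v^\alpha_i \in \m V$ with pairwise disjoint supports realizing $\eta^\alpha_i$; their sum $v^\alpha$ satisfies $\|v^\alpha\|_{C^2} = \max_i \|v^\alpha_i\|_{C^2}$, so $v^\alpha \in \m V$ and $u+v^\alpha \in \m H(L,c,U)$. The linearized Poincaré map of $\alpha$ for $L-u-v^\alpha$ is therefore hyperbolic, meaning $\eta^\alpha$ is hyperbolic, so $\xi$ is stably hyperbolic. Theorem \ref{Contreras2010_T8.1} then provides $d\phi^{L-u}$-invariant continuous subbundles $E^s, E^u$ over $Per(L,u,c,U)$ with uniform hyperbolic constants, and a routine compactness argument (cf.\ \cite[\S~6]{Katok_Hasselblatt:1995}) extends them to the required splitting $T_\Lambda E^{-1}_{L-u}(c) = E^s \oplus \langle X^{L-u}\rangle \oplus E^u$ on the compact invariant set $\Lambda$.

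The main obstacle lies in the stable-hyperbolicity step: one must ensure that the Franks-type perturbation realizing the new value $\eta^\alpha_i$ does not spoil the linearized Poincaré maps along the remaining segments of $\alpha$ or along other orbits in $\m P(L,u,c,U)$ that also pass through $W^\alpha_i$. This is precisely the role of the family $\mathfrak{F}$ built into Theorem \ref{Franks lemma}: its neighborhoods are excluded from the support of the potential, keeping each perturbation strictly local to the chosen segment and leaving the other Poincaré maps unchanged.
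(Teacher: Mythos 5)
Your proposal is correct and follows essentially the same route as the paper: encode each closed orbit as a bounded periodic sequence of linearized Poincaré maps over $[k_0,2k_0]$-time steps, establish stable hyperbolicity of the family via the infinitesimal Franks' Lemma (Theorem~\ref{Franks lemma}) with the $\mathfrak{F}$-mechanism keeping perturbations local, invoke Theorem~\ref{Contreras2010_T8.1} to pass to uniform hyperbolicity, and extend the splitting to $\Lambda$ by continuity. The only cosmetic difference is that you argue the stable-hyperbolicity step directly (any nearby $\eta$ is realized by a potential staying in $\m H(L,c,U)$, hence hyperbolic), whereas the paper phrases the same argument as a proof by contradiction.
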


\begin{proof}
Let $c>e_0(L)$ and let $ k_0=\rho_{inj}/4$,be given by Corollary~\ref{k_0}. For each $\alpha\in Per(L,u,c,U)$, there is $\theta= (x,v) \in \pi^{-1}(U)\cap E_{L-u}^{-1}(c) $ such that $\alpha(t)=\phi_t^{L-u}(\theta)=(\gamma(t),\dot{\gamma}(t))$. Let $T_{\alpha}$ be the minimal period of $\alpha$ and $n=n(\alpha,u)$ such that $T_{\alpha}=nt_0$ for some $t_0\in(k_0,2k_0]$. For each $ 0 \leq i \leq n-1$, we define  the
segment $ \gamma_i(t) = \gamma( it_0 + t ) $ and let $ \Sigma_i \in T(\pi^{-1}(U)) $ be the local transversal sections at $ \alpha(it_0)=\phi_{it_0}^{L-u}(\theta) $  such that $ T_{\alpha(it_0)} \Sigma_i =\mathcal N(\alpha(it_0))=\mathcal N (i,\alpha) $. We denote by
\[ \xi_{i,u}^{\alpha}= dP(u, \Sigma_i,\Sigma_{(i+1)}):\mathcal N(i,\alpha) \rightarrow \mathcal N(i+1,\alpha)\] the linearized Poincar\'e maps. 

Consider the family \[\xi(u)=\{\xi^{\alpha}\}_{\alpha\in Per(L,u,c,U)}:=\{\xi_{i,u}^{\alpha}\,;\,\alpha\in Per(L,u,c,U)\}.\]

\bigskip

In the proof of the following lemma, we must use   the Franks' Lemma for Tonelli Lagrangians (see 
Theorem\ref{Franks lemma}).

\begin{lem}
\label{linear h-stability}
The family $\xi(u)=\{\xi^{\alpha}\}_{\alpha\in Per(L,u,c,U)}$ is stably hyperbolic.
\end{lem}

\begin{proof}
Since $u\in\m F^2(L,c,u)$, there is a $C^2$-neighborhood $\m U\subset\m F^2(L,c,u) $ such that each orbit of $Per(L,u,c,U)$ has a hyperbolic analytic continuation $\phi_t^{L-\tilde{u}}(\theta)=\beta(\alpha)$ with $\tilde{u}\in \m U$, because otherwise we could produce a non-hyperbolic orbit. Then each $\beta(\alpha)$ intersects the section $\Sigma_i$, with $0\leq I \leq n-1$ and we can cut $\beta(\alpha)$ into the same number of segments of $\alpha$. So, if we denote by \[\tau_{i,\tilde{u}}^{\beta}=dP(\tilde{u},\Sigma_i,\Sigma_{i+1}):T_{\phi_i^{L-\tilde{u}}(\theta)}\Sigma_i\rightarrow T_{\phi_{i+1}^{L-\tilde{u}}(\theta)}\Sigma_{i+1}\] the linearized Poincar\'e maps associated to $\beta(\alpha)=\phi_t^{L-\tilde{u}}(\theta)$, we have that the family
\begin{equation}\label{fam-per}
\xi(\tilde{u}):=\left\{\tau_{i,\tilde{u}}^{\beta}\,;\,\beta \mbox{ is the analytic continuation of }\alpha \mbox{ with } 0\leq i \leq n-1 \right\},
\end{equation}
is hyperbolic.

Let be $\delta=\delta(L,c,u)>0$ given by the Frank's Lemma (Theorem \ref{Franks lemma}).

We suppose that $\xi(u)$ is not stably hyperbolic. Then there exists a closed orbit $\alpha \in Per(L,u,c,U) $ and a sequence of linear symplectic maps $\eta_i\in Sp(n)$ 
such that:
\[ \| \xi_{i,u}^{\alpha} - \eta_i \| <\delta \,\, \mbox{ and }\,\, \prod_{i=0}^{n(\theta_t,u)} \eta_i \, \mbox{  is not hyperbolic}.\]
Note that the perturbation space in the  Franks' Lemma preserves the selected orbit $\alpha$. It follows from the Fanks' Lemma that there is a potential $\tilde{u_i}\in\m{U}\cap \mathcal F^2(\theta_t,k_0, W,\mathfrak{F})$ such that $\alpha$ is a periodic orbit of $\phi_t^{L-\tilde{u_i}}$ and $\tau_{i,\tilde{u_i}}^{\alpha}=\eta_i $. Let $\tilde{u}:=\tilde{u}_1+\cdots +\tilde{u}_n$. Since that
 \[ d_{\theta_0}P(\tilde{u}, \Sigma_0,\Sigma_0) = \prod_{i=0}^{n(\alpha,u)}
 \tau_{i,\tilde{u}}^{\alpha}=\prod_{i=0}^{n(\alpha,u)} \eta_i,\]
 the orbit $ \alpha $ is a non-hyperbolic orbit for the Euler-lagrange flow $\phi_t^{L-\tilde{u}} $. This contradicts the choice of $\mathcal U $.
\end{proof}

Since that  $\xi(u)$ is a stably hyperbolic family, applying the Theorem \ref{Contreras2010_T8.1}, we have that $\xi(u)$ is uniformly hyperbolic and we can obtain a hyperbolic continuous splitting on $Per(L,u,c, U)$. Then, using the continuity, we extend the hyperbolicity condition to the closed set $\Lambda=\overline{Per(L,u,c, U)}$. Thus $\Lambda$ is a hyperbolic set.
\end{proof}

\section{Lagrangian flow with a non-hyperbolic closed orbit}
\label{section non-hyperbolic}

In this section, we going to prove the Proposition~\ref{energy with a non-hyperbolic orbit}.

\smallskip

 First, we recall some facts about the jet space of symplectic diffeomorphisms.
We consider the space of smooth diffeomorphisms 
$f : \R^{m}\to \R^{m}$ such that $f(0) =0 $.
Given $ k \in \N $, we say that the diffeomorphisms $f$ and $g$ are 
{\em $k$-equivalent} if their Taylor polynomials of degree  $k$ at $0$ are equal.
The {\em $k$-jet} of a diffeomorphism $f$ at $0$, $j^k f (0)$, or $j^k f$ for short,
is the equivalence class of $f$.
If we consider only symplectic diffeomorphisms in $\R^{2n}$, 
the set of all the equivalence classes is the {\em space of symplectic $k$-jets}, 
that we denote $J^k_s(n) $.
Observe that $ J^k_s(n)$ is 
a Lie group, 
with the product defined by
\[j^k f \cdot j^k g = j^k ( f\circ g ).\]
We say that a subset $Q \subset J^k_s(n)$ is {\em invariant} if
\begin{equation}
\label{inv-subset-J}
\sigma \cdot Q \cdot \sigma^{-1} = Q ,  \forall   \sigma \in J^k_s(n) .
\end{equation}

\smallskip

Note that  if $M$ has dimension $n+1$, 
$(\gamma(t),p(t))$ is a non-trivial closed orbit of the Hamiltonian flow of $H:T^*M\to\R$
in  the energy level $H^{-1}(c)$, and
$\Sigma \subset  H^{-1}(c)$ is a local transverse section 
at the point $ (\gamma(0),p(0))$,
then $\Sigma$ also has a symplectic structure and the 
Poincar\'e map $P(\Sigma,H):\Sigma \rightarrow \Sigma $ is a symplectic
diffeomorphism.
Therefore, using Darboux coordinates, we can assume that
$j^k P(\Sigma,H))\in J^k_s(n)$.

\bigskip

Given  an invariant subset 
$Q \subset  J^k_s(n)$ and a  closed orbit $\theta$ of a Hamiltonian flow,  
it follows from (\ref{inv-subset-J}) that the property 
``the  $k$-jet of the  Poincar\'e map of $ \theta$ belongs to $Q $'' 
is  independent  of  the section $\Sigma$ and the coordinate system; 
hence, it is well-defined.

The following local perturbation theorem was proved by C. Carballo and J.A. Miranda in \cite{Carballo_Miranda:2013}.

\begin{thm}
\label{T-jet}
Let $M$ be a closed manifold of dimension $n+1$, $H: T^*M\to\R$,  
$k\geq 1$, and
$Q$ any open invariant subset of $J^k_s(n)$.
Suppose that $
(\gamma(t),p(t))$ is a non-trivial closed orbit 
of the Hamiltonian flow of $H$.
If the $k$-jet of the Poincar\'e map of $(\gamma(t),p(t))$ is in $\overline{Q}$,
then there exists a smooth potential $u: M\to \R $, 
$C^r$-close to zero, $r\geq k+1$, such that
\begin{enumerate}
\item
$(\gamma(t),p(t))$ is also a closed orbit of the Hamiltonian flow of $H+u$  and
\item
the $k$-jet of the Poincar\'e map of $(\gamma(t),p(t))$, 
as a closed orbit of the Hamiltonian flow of $H+u$, is in $Q$.
\end{enumerate}
\end{thm}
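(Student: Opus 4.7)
The plan is to reduce the assertion to a local surjectivity statement for the jet map $u\mapsto j^k P_u$ and exploit openness of $Q$. Since $Q$ is open in $J^k_s(n)$ and $j^k P(\Sigma,H)\in\overline{Q}$, every neighborhood of $j^k P(\Sigma,H)$ in $J^k_s(n)$ meets $Q$, so once we prove that arbitrarily small $C^r$-perturbations of $H$ by potentials with $u|_\gamma=0$ and $du|_\gamma=0$ realize every $k$-jet in some neighborhood of $j^k P(\Sigma,H)$, we immediately extract the desired potential satisfying both conclusions of the theorem.

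First I would fix a tubular neighborhood $W$ of $\gamma$ and local coordinates $(x_0,x_1,\dots,x_n)$ on $W$ as in Lemma~\ref{coordinates}, with $\gamma(t)=(t,0,\dots,0)$ and $p(t)=(1,0,\dots,0)$. In these coordinates, the requirement that $\gamma$ remain a closed orbit of $H+u$ is exactly $u(\gamma(t))=0$ and $d_{\gamma(t)}u=0$ for all $t$, which automatically gives condition (1). Choosing Darboux coordinates on a local transverse section $\Sigma$ at $(\gamma(0),p(0))$, I would identify $j^k P_u$ with an element of $J^k_s(n)$ and work intrinsically in the Lie group structure, noting that adding a potential $u(x)$ to $H$ is automatically a Hamiltonian perturbation, so $j^k P_u\in J^k_s(n)$ for every admissible $u$.

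The heart of the argument is the construction of a finite-dimensional family of test potentials $u_w$ supported in $W$, vanishing to first order along $\gamma$, chosen so that the resulting map $w\mapsto j^k P_{u_w}\in J^k_s(n)$ is a submersion at $w=0$. A natural choice is a linear combination of monomials $\alpha_\epsilon(x_1,\dots,x_n)\,x_1^{\beta_1}\cdots x_n^{\beta_n}$ with total degree running from $2$ up to $k+1$, each multiplied by a time-dependent coefficient built from bump functions in $x_0$ and their derivatives, generalizing the ansatz in equation~(\ref{definitions of u(w)(t,x)}). A variation-of-constants calculation along $\gamma$, extending the integration-by-parts computation producing~(\ref{W1}), expresses the directional derivative $d_w j^k P_{u_w}|_{w=0}$ as an integral along $\gamma$ of a linear combination of the transverse derivatives of $u_w$ of orders $2$ through $k+1$. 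Surjectivity onto $T_{j^kP}J^k_s(n)$ follows by a triangular/inductive argument on the order: quadratic monomials realize an arbitrary $1$-jet perturbation, which is precisely the content of Proposition~\ref{prop. cota derivada} for this paper; cubic monomials then additionally realize a $2$-jet perturbation without disturbing the $1$-jet block; and so on up to degree $k+1$. Once surjectivity of the linearization is established, Lemma~\ref{general lemma of open maps} supplies a ball around $j^k P(\Sigma,H)$ in the image, which intersects $Q$; and Lemma~\ref{normC2} (or its higher-derivative analogue) controls the $C^r$-norm of $u_w$ by the parameter $\|w\|$ and the $C^r$-norms of the chosen bumps.

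The main obstacle is the surjectivity step. The space $J^k_s(n)$ is carved out of the jet space of local diffeomorphisms by $k$-th order symplecticity constraints, so the derivative $d_w(j^k P_{u_w})|_{w=0}$ automatically lands in the tangent space $T_{j^kP}J^k_s(n)$; the delicate point is to show that the triangular family of polynomial perturbations actually spans this constrained tangent space. This requires a careful count of degrees of freedom, showing that the monomials of degree $d+1$ in $(x_1,\dots,x_n)$ (with time-dependent coefficients) provide exactly enough parameters to span the "degree-$d$ layer" of $T_{j^kP}J^k_s(n)$ once the lower-order perturbations have been fixed. Carrying this out rigorously requires combining the finite-dimensional geometric-control arguments of Rifford-Ruggiero, invoked already for the $k=1$ case of Franks' lemma in this paper, with the inductive, degree-by-degree construction described above; this is where the bulk of the work of \cite{Carballo_Miranda:2013} lies.
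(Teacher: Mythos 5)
The paper does not prove Theorem~\ref{T-jet}; it is quoted as a black box from Carballo--Miranda~\cite{Carballo_Miranda:2013}. Your outline --- deduce the result from openness of $Q$ once the jet map $u\mapsto j^k P_u$ is shown to be open near $u=0$ on the space of admissible potentials, then attack local openness by constructing a triangular family of polynomial test perturbations of transverse degree $2,\dots,k+1$ --- does reflect the architecture of that paper, and it fits naturally with the Franks-lemma toolkit developed here (Proposition~\ref{prop. cota derivada} for the degree-$1$ block, Lemma~\ref{general lemma of open maps} for converting a submersion into a quantitative openness statement, Lemma~\ref{normC2} for $C^r$-norm control). Your reduction using $j^k P\in\overline{Q}$ with $Q$ open is correct, and the requirement $u|_\gamma=0$, $du|_\gamma=0$ correctly accounts for conclusion (1).

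The genuine gap is that the surjectivity of $d_w\bigl(j^k P_{u_w}\bigr)\big|_{w=0}$ onto $T_{j^kP}J^k_s(n)$ is asserted, not established, and this is where the theorem actually lives. Concretely, three things are left open: (i) $T_{j^kP}J^k_s(n)$ is a filtered space of jets of Hamiltonian polynomial vector fields, and one must verify that degree-$(d+1)$ transverse monomials with time-dependent coefficients built from a bump function $\delta$ and finitely many of its derivatives span exactly the degree-$d$ homogeneous layer once lower-degree parameters are chosen --- a dimension count that is not routine because the symplectic constraints couple the layers; (ii) the integration-by-parts computation that yields~\eqref{W1} in the quadratic case must be generalized degree by degree, and it must be shown that $\delta,\delta',\delta'',\dots$ give independent directions in each layer, which is an algebraic statement about the nilpotent part of the jet group, not something that follows automatically from the $k=1$ computation; (iii) the claimed triangular structure (degree-$(d+1)$ potentials do not disturb the $\ell$-jet of $P_u$ for $\ell<d$) needs a short proof by Taylor-expanding the Hamilton equations in transverse coordinates. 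You yourself flag (i)--(iii) as ``the bulk of the work of~\cite{Carballo_Miranda:2013}''; that is accurate and honest, but it means the proposal is a correct plan rather than a proof.
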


Recall that a closed orbit $\theta$ in $H^{-1}(c)$ is {\em $q$-elliptic}
if the derivative of its Poincar\'e map $P$ has exactly $2q$ eigenvalues 
of modulus $1$ which are non-real;
it is {\em quasi-elliptic} if it is $q$-elliptic for some $q>0$. 
If $\theta$ is a $q$-elliptic closed orbit, then the central manifold $W^c$ of the 
Poincar\'e map $P:\Sigma\to \Sigma$ of $\theta$ has dimension $2q$, 
$\omega|_{W^c}$ is non-degenerate, and $F=P|_{W^c}$ is a symplectic map on a
sufficiently small neighborhood of $\theta$.

Let 
$\lambda_1, \dots, \lambda_q, \overline{\lambda_1}, \dots, \overline{\lambda_q}$ 
be the eigenvalues of modulus $1$ of the derivative $d_\theta P$ of the Poincar\'e map of 
a $q$-elliptic orbit $\theta$.
We say that $\theta$ is {\em $4$-elementary} if
$\prod_{i=1}^q \lambda_i^{m_i}\neq 1$ 
whenever the integers $m_i$ satisfy $1\leq\sum_{i=1}^q |m_i|\leq 4$.

Let us recall the Birkhoff's Normal  Form (for a proof  see
 \cite[p.~101]{Klingenberg:1978}).

\begin{thm}[Birkhoff Normal Form]
\label{bnf}
Let $F: \R^{2q}\to \R^{2q}$ be a symplectic diffeomorphism  such that $0$ is a 
$q$-elliptic $4$-elementary fixed point for $F$.  
Then
there are symplectic coordinates $(x,y)= (x_1, \dots, x_q, y_1, \dots, y_q)$ 
in a neighborhood of $0$ such that
\[
F_k(x,y)=F_k(z)=e^{2\pi i\phi_k(z)} z_k+ R_k(z), \quad k=1, \dots, q,
\]
where
$z= x+ iy$, $\phi_k(z)= a_k+ \sum_{l=1}^q \beta_{kl} |z_l|^2$,
$\lambda_k= e^{2\pi i a_k}$,
and $R_k(z)$ has zero derivatives up to order $3$ at $0$. 
\end{thm}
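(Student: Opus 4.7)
The plan is to follow the classical Birkhoff normalization procedure: first diagonalize the linear part $dF(0)$ by a symplectic linear change of coordinates, then kill non-resonant Taylor monomials of orders $2$ and $3$ by two successive near-identity symplectic conjugations generated by polynomial Hamiltonians.

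First, since $dF(0)$ is symplectic with all eigenvalues $\lambda_k,\overline{\lambda_k}$ on the unit circle, and since the $4$-elementary hypothesis (applied with $m=e_k-e_l$ or $m=2e_k$, both of total degree $\leq 4$) rules out coincident eigenvalues and the values $\pm 1$, the eigenvalues are simple and non-real. Therefore there is a real symplectic linear change of coordinates in which $dF(0)$ acts diagonally on the complex coordinates $z_k=x_k+iy_k$ by $z_k\mapsto\lambda_k z_k$, and in these coordinates
\[
F_k(z,\overline z)=\lambda_k z_k+P_k^{(2)}(z,\overline z)+P_k^{(3)}(z,\overline z)+O(|z|^4),
\]
with $P_k^{(m)}$ homogeneous of degree $m$ in $(z,\overline z)$.

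Next, for $m=2$ and $m=3$ successively I construct a real polynomial $\chi_{m+1}$, homogeneous of degree $m+1$, whose Hamiltonian time-$1$ map $\Phi_m=\exp X_{\chi_{m+1}}$ is a symplectic near-identity diffeomorphism of the form $\mathrm{Id}+O(|z|^m)$. The degree-$m$ term of $\Phi_m^{-1}\circ F\circ\Phi_m$ differs from $P_k^{(m)}$ by the cohomological correction $L\Psi-\Psi\circ L$ with $L=dF(0)$ and $\Psi=X_{\chi_{m+1}}$; on the monomial $z^\alpha\overline z^\beta e_k$ this operator acts by multiplication by $\lambda^\alpha\overline\lambda^{\beta}-\lambda_k$. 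Setting $m_j=\alpha_j-\beta_j-\delta_{jk}$, the $4$-elementary assumption $\prod_j\lambda_j^{m_j}\neq 1$ for $1\leq\sum|m_j|\leq 4$ guarantees that this multiplier is non-zero whenever $\alpha-\beta\neq e_k$; hence the coefficients of $\chi_{m+1}$ on all non-resonant modes are determined uniquely and the corresponding monomials are eliminated from $F$.

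Finally, I identify the surviving resonant monomials, those with $\alpha-\beta=e_k$. Since $|\alpha|+|\beta|$ then has the same parity as $|e_k|=1$, there are none at $m=2$, so the quadratic part is removed entirely. At $m=3$ the solutions are $\alpha=e_k+e_l$, $\beta=e_l$, $l=1,\dots,q$, producing the monomials $z_k|z_l|^2$. Using \emph{real} generating Hamiltonians throughout and the complex-conjugation symmetry $F(\overline z)=\overline{F(z)}$ forces the resonant coefficients to take the form $2\pi i\lambda_k\beta_{kl}$ with $\beta_{kl}\in\R$, so
\[
F_k(z)=\lambda_k z_k+2\pi i\lambda_k z_k\sum_{l=1}^q\beta_{kl}|z_l|^2+R_k(z),
\]
which coincides through third order with the Taylor expansion of $e^{2\pi i\phi_k(z)}z_k$ for $\phi_k(z)=a_k+\sum_l\beta_{kl}|z_l|^2$; the discrepancy is absorbed into $R_k$, which has vanishing derivatives up to order $3$ at $0$.

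The main obstacle is the algebraic bookkeeping that (i) keeps each $\Phi_m$ symplectic and ensures the composition yields one honest symplectic chart, and (ii) shows the surviving resonant polynomial assembles into the phase of a genuine rotation with a \emph{real} frequency function $\phi_k$. Both are handled by systematically working with real generating Hamiltonians $\chi_{m+1}$, which makes the cohomological equation decouple into complex-conjugate pairs and forces the reality of the phase; the divisor estimates needed to solve these equations are supplied uniformly on a finite-dimensional space of polynomials by the $4$-elementary condition.
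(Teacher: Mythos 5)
The paper does not prove this statement; it records the Birkhoff Normal Form as a classical result and cites Klingenberg, \emph{Lectures on Closed Geodesics}, p.~101, for a proof. So there is no in-paper argument to compare your sketch with; I can only assess it against the standard one, which is indeed what you are reproducing.

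The symplectic diagonalization of the elliptic linear part (with the $4$-elementary hypothesis applied to $m=2e_k$ and $m=e_k\pm e_l$ giving distinct non-real eigenvalues), the cohomological operator with small divisors $\lambda^\alpha\overline\lambda^\beta-\lambda_k$, the use of the $4$-elementary condition to clear every monomial off the resonant set $\alpha-\beta=e_k$ for $|\alpha|+|\beta|\leq 3$, the parity argument eliminating quadratic resonances, and the identification of the cubic resonances as $z_k|z_l|^2$ are all correct. The one step that is stated too loosely is the claim that reality, in the form $F(\overline z)=\overline{F(z)}$, ``forces'' the surviving coefficient of $z_k|z_l|^2$ in $F_k$ to be $2\pi i\lambda_k\beta_{kl}$ with $\beta_{kl}\in\R$. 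Reality alone places no constraint on $c_{kl}/\lambda_k$ (it only relates the coefficients in $F_k$ and $\overline{F_k}$). What actually pins down $c_{kl}/\lambda_k\in i\R$ is that the normalized $3$-jet is simultaneously real, symplectic, and, to that order, $\T^q$-equivariant for the action $z\mapsto(e^{i\theta_1}z_1,\dots,e^{i\theta_q}z_q)$: a symplectic $\T^q$-equivariant germ of $\R^{2q}$ fixing $0$ must be of the form $z_k\mapsto e^{i\Theta_k(I)}z_k$ with $\Theta_k$ real, $I_l=|z_l|^2$. Since you generate each conjugating map $\Phi_m$ from a \emph{real} Hamiltonian, the conjugated map does enjoy all three properties, so this is a presentational slip rather than a genuine error, but ``reality forces'' is not the mechanism; symplecticity has to be invoked at this point as well, not only to keep $\Phi_m$ symplectic along the way.
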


Then, we say that a $q$-elliptic closed orbit $\theta$ is {\em weakly monotonous}
if the coefficients $\beta_{kl}$ of $F=P|_{W^c}$ satisfies 
$\det[\beta_{kl}]\neq 0$. Observe that what we call here weakly monotonous is a generalization of the twist condition
of surface diffeomorphisms. The weak monotonous conditions for a q-elliptic closed curve imply that the restriction of the Poincar\' e map, defined for a local transversal section of this curve in the energy level, is conjugated to a twist map on $ \mathbb{T}^q \times \R^q$. 

\smallskip

We shall use the following result:

\begin{thm}[{\cite[Theorem 4.1]{Contreras:2010}}]
\label{existence of 1-elliptic}
If $ F: \mathbb{T}^q \times \R^q \to  \mathbb{T}^q \times \R^q$ is a $C^4$-Kupka-Smale weakly monotonous  exact symplectic diffeomorphism which is $ C^1$ near a symplectic completely integrable  diffeomorphism $ G$, then $ F$ has a 1-elliptic periodic point near $ \mathbb{T}^q \times \{0\}$. In particular, there is a non-trivial hyperbolic set for $F$ near  $ \mathbb{T}^q \times \{0\}$ and $ h_{top}(F)\not= 0$.
\end{thm}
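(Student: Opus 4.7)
The plan is to combine KAM persistence with a Poincaré--Birkhoff type argument to locate periodic orbits with the right stability type, and then exploit the low-dimensional center dynamics at a $1$-elliptic orbit to produce a horseshoe.

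First I would analyze the integrable model $G$ in action-angle coordinates $(x,y)\in\mathbb{T}^q\times\R^q$, where $G(x,y)=(x+\nabla h(y),\,y)$ for a generating function $h$ whose Hessian is non-degenerate (this is the twist hypothesis, consistent with the weakly monotonous assumption being inherited from $G$ by closeness). For $F$ a small $C^1$-perturbation of $G$, I would invoke KAM theory to obtain persistence of a positive-measure Cantor set of invariant Lagrangian tori carrying Diophantine rotation vectors, lying arbitrarily close to $\mathbb{T}^q\times\{0\}$. These KAM tori confine the orbits of $F$ in thin invariant annular regions, and between two nearby surviving tori one is in a situation where a higher-dimensional analog of Poincaré--Birkhoff/Aubry--Mather theory applies.

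Next, for resonant rotation vectors $p/q$ wedged between Diophantine ones, I would produce periodic orbits of $F$ by a variational (Aubry--Mather) or topological (Conley index / Birkhoff) argument using the exact symplectic and twist structure; this yields both minimizing and minimax periodic orbits for each chosen rotation class. The Kupka--Smale hypothesis ensures these periodic orbits are non-degenerate (no root of unity as eigenvalue of the linearized Poincar\'e map). The crucial step is to show that among these periodic orbits, or in a continuous family obtained by varying the rotation class, at least one is $1$-elliptic. The idea is that generically on such families the Krein signature of the eigenvalues on the unit circle can only change by colliding pairs of eigenvalues through $\pm 1$, and under Kupka--Smale such collisions are excluded; a careful Morse-index comparison between the minimizing orbit (which tends to have a large hyperbolic part from the twist) and the minimax orbit then forces a $1$-elliptic orbit to appear. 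I expect this step --- producing a \emph{$1$-elliptic} orbit rather than merely a non-hyperbolic one --- to be the main obstacle, because ruling out $q$-elliptic or mixed configurations requires delicate control of the eigenvalue behavior under the twist condition.

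Finally, once a $1$-elliptic periodic orbit $p$ is obtained, I would use the Birkhoff Normal Form (Theorem~\ref{bnf}) on its $2$-dimensional center manifold $W^c$: the restriction $F|_{W^c}$ is an area-preserving diffeomorphism with $p$ as an elliptic fixed point, and the weakly monotonous (twist) condition, together with the $C^4$-Kupka--Smale genericity (which rules out low-order resonances and ensures a non-vanishing first Birkhoff coefficient), places us in the classical setting of a non-degenerate twist map near an elliptic fixed point. Zehnder's theorem on generic existence of transverse homoclinic points for area-preserving twist maps then provides transverse homoclinic intersections of the center-stable and center-unstable manifolds of $p$ in $\mathbb{T}^q\times\R^q$. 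By the Smale--Birkhoff homoclinic theorem this produces a non-trivial hyperbolic basic set near $\mathbb{T}^q\times\{0\}$, and thus $h_{\mathrm{top}}(F)>0$, as claimed.
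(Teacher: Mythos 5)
Your outline captures the correct high-level shape of the argument (Moser normalization to a twist map, production of periodic orbits by the variational/index machinery, passage to the $2$-dimensional centre manifold of a $1$-elliptic orbit, and a horseshoe from a homoclinic point), but the proposal has a genuine gap at exactly the point you flag yourself: producing a periodic orbit that is \emph{specifically} $1$-elliptic. The paper does not reprove this step, citing it to \cite[Theorem 4.1]{Contreras:2010}; the substance of that proof is a precise Maslov/Conley--Zehnder index bookkeeping for critical points of the discrete generating-function action of the twist map. Under the weak monotonicity hypothesis the Morse index of a periodic critical chain is in monotone correspondence with the number of eigenvalue pairs that have wound past $+1$, and Lusternik--Schnirelmann theory on $\mathbb{T}^q$ forces critical points of consecutive indices; the index-jump of exactly one between a minimizer and the next critical level is what pins down a \emph{single} elliptic pair. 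Your appeal to ``Krein signatures can only change by collisions through $\pm1$, excluded by Kupka--Smale, plus a Morse-index comparison'' gestures at this but is not an argument: nothing in your sketch prevents two or more elliptic pairs from appearing simultaneously, nor does it show the relevant index jump is exactly one. Also, the KAM-confinement picture you open with does not help here: for $q\geq 2$ the surviving Lagrangian tori have codimension $q>1$ in $\mathbb{T}^q\times\mathbb{R}^q$ and do not separate, so there are no ``thin invariant annular regions''; the existence of periodic orbits must (and in the source does) come from the variational structure alone.

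There is a second, smaller issue in the final step. You invoke Zehnder's generic-transverse-homoclinic theorem for the area-preserving restriction $F|_{W^c}$, but the $C^4$-Kupka--Smale hypothesis is imposed on the ambient $2q$-dimensional map $F$, and there is no reason the induced $2$-dimensional return map lies in Zehnder's residual set; genericity in one space does not transfer to genericity of a restricted map. The paper's route (see the proof of Proposition~\ref{energy with a non-hyperbolic orbit}) avoids this: the Birkhoff normal form together with weak monotonicity makes $F|_{W^c}$ an honest twist map of the annulus, for which \emph{every} such map has a homoclinic orbit by Le Calvez \cite{Le-Calvez:1991} --- no genericity required --- and the ambient Kupka--Smale condition is then what upgrades the homoclinic intersection of the full stable and unstable manifolds to a transverse one, giving the horseshoe. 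Replacing Zehnder by Le Calvez in your last paragraph is essential, not cosmetic, because the theorem is a statement about a fixed $F$, not a residual set of maps.
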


We are now ready to show Proposition~\ref{energy with a non-hyperbolic orbit}.

\subsection{Proof of Proposition~\ref{energy with a non-hyperbolic orbit}} \

 Observe that, by the Birkhoff Normal Form,  the set  $Q_0\subset J^3_s(n)$ of $C^\infty$ symplectic diffeomorphisms
$F$ on $\R^{2n}$ such that the origin is 
a weakly monotonous $4$-elementary $q$-elliptic fixed point, for some $q>0$
 is open and invariant.
 
 If the Hamiltonian flow $ \phi_t^H=\m L\circ \phi_t^L $   has a non-hyperbolic closed orbit $ (\gamma(t),p(t))$ in the energy level $H^{-1}(c)$, then the $3$-jet of the Poincar\'e map of $(\gamma(t),p(t))$, 
as a closed orbit of the Hamiltonian flow of $H$, is an element of   $\overline{Q_0}$.  By applying the  Theorem~\ref{T-jet},  we obtain a potential  $ u: M\to \R$ of arbitrary small $ C^r$-norm, such that,   $ \left. \phi_t^{H+u} \right|_{ {H+u}^{-1}(c)} $
has a weak monotonous q-elliptic closed orbit.  Also, we can assume that $ \left. \phi_t^{H+u}\right|_{ {H+u}^{-1}(c)} $ satisfies the Kupka-Smale Theorem.    

 The weak monotonous conditions for a q-elliptic closed curve implies that the restriction of the Poincar\' e map, defined for a local transversal section of this curve in the Hamiltonian level, to the $2q$ dimensional central manifold, is conjugated to a Kupka-Smale weakly monotonous ( a twist map) F on $ \mathbb{T}^q \times \R^q$. In \cite{Moser:1977}, Moser proves that there is a subset  $ \mathbb{T}^q \times \mathbb{B}_\delta$ near $ \mathbb{T}^q \times \{0\}$ and a iterate $ N \in \N$, such that, the map $ F^N $ on  $ \mathbb{T}^q \times \mathbb{B}_\delta$ is a weakly monotonous exact symplectc diffeomorphism    which is $ C^1$-near to a symplectic completely integrable  diffeomorphism $ G$. In this case, the Theorem~\ref{existence of 1-elliptic} says that $  F$ has a 1-elliptic periodic point.
 It is proved in  \cite[section 4]{Contreras:2010}, that the restriction of the Poincar\'e map of this 1-elliptic orbit to its two-dimensional central manifold is a twist map on the annulus $ S^1\times \R $, then has a homoclinic orbit (see for example \cite[remarques p34]{Le-Calvez:1991}).  Since the central manifold is normally hyperbolic and the Kupka-Smale conditions imply the whole Poincar\'e map has a homoclinic point, therefore it has positive topological entropy.

\section{Tonelli Lagrangians on surfaces: }
\label{proof surface case}

In this section, we will consider the low dimensional case, i.e. $\dim(M)=2$.

Recall that the universal critical value $ c_u(L)$  is the critical value (\ref{c(L)}) for the lift of $ L $ to the universal covering  $ p:\tilde{M}_u \to M$. 
This is equivalent to saying that
\begin{equation}
\label{c_u(L)}
 c_u(L)=\inf\{k\in\mathbb{R}:\,A_{L+k}(\gamma)\geq 0, \ \forall \mbox{ contractible  closed curve } 
 \gamma:[0,T]\to  M \},
 \end{equation} 

In \cite{Contreras_Iturriaga_Paternain_Paternain:2000},  G. Contreras, 
R. Iturriaga, G. P. Paternain, and  M. Paternain showed  that for any  $c>c_u(L)$  
there exists a closed orbit with energy $c$     in any non-zero homotopy class.  Then,
the existence of infinitely many closed orbits with energy  $c>c_u(L)$ can be deduced 
from further well-known properties of the fundamental group of a closed surface. For example, it implies  that 

\begin{lem}
\label{prop infinite closed orbit for high energy} 
If $ M \not= S^2$  then, for any  $ c>c_u(L) $, the flow $ \left.\phi^L_t \right|_{E_L^{-1}(c)} $ has infinity many closed orbits.
\end{lem}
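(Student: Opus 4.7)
The plan is to invoke the theorem of Contreras--Iturriaga--G.\,P.\ Paternain--M.\ Paternain cited just before the lemma: for every $c>c_u(L)$ and every non-trivial free homotopy class $[\alpha]$ in $\pi_1(M)/\text{conj}$, there exists a closed orbit of $\phi^L_t|_{E_L^{-1}(c)}$ realizing $[\alpha]$. Once this is taken as given, the lemma reduces to a counting statement in $\pi_1(M)$. A primitive closed orbit $\theta$ of $\phi^L_t$ has a well-defined primitive conjugacy class $[\theta]\subset\pi_1(M)$, and two primitive orbits coincide geometrically if and only if $[\theta_1]$ and $[\theta_2]$ generate the same maximal cyclic subgroup up to inversion. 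Hence it suffices to exhibit infinitely many primitive conjugacy classes in $\pi_1(M)$ that produce pairwise distinct maximal cyclic subgroups modulo inversion, and to apply the cited theorem to one representative of each such class.

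The counting is done case by case according to the topology of $M$ (excluding $S^2$ by hypothesis, and tacitly $\mathbb{RP}^2$, which has finite fundamental group). If $M=T^2$, then $\pi_1(M)=\Z^2$ is abelian and the primitive elements are the coprime pairs $(p,q)\in\Z^2$; modulo sign these yield infinitely many primitive cyclic subgroups, one per rational slope. If $M$ is closed orientable of genus $g\geq 2$, then $\pi_1(M)$ is non-elementary word-hyperbolic and contains a free subgroup of rank $\geq 2$; a standard combinatorial argument on cyclically reduced words gives infinitely many primitive conjugacy classes generating pairwise distinct cyclic subgroups. For a non-orientable closed surface $M$ of non-orientable genus $\geq 2$, I would pass to the orientable double cover $p:\widetilde M\to M$: the lift $\widetilde L=L\circ dp$ is Tonelli, and since every contractible closed curve in $\widetilde M$ projects to a contractible closed curve in $M$ with the same action one has $c_u(\widetilde L)\leq c_u(L)<c$. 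The already-treated orientable case then furnishes infinitely many closed orbits of $\phi^{\widetilde L}_t$ at energy $c$ upstairs; projecting by $p$ (which has degree $2$) yields infinitely many closed orbits of $\phi^L_t$ at energy $c$ downstairs.

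The main obstacle, if one wishes to call it that, is purely bookkeeping: one must verify that the primitive conjugacy classes produced in each topological case are genuinely distinct modulo the equivalence relation generated by conjugation and inversion, so that the CIPP theorem yields geometrically distinct closed orbits rather than repetitions. This verification is elementary in every case (slope for $T^2$, cyclically reduced word length and lexicographic comparison for genus $\geq 2$, preimage counting for non-orientable surfaces), so there is no substantive analytic difficulty beyond the already invoked existence theorem.
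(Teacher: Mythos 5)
Your proposal follows essentially the same route as the paper: both invoke Theorem 27 of Contreras--Iturriaga--Paternain--Paternain to produce a closed orbit in every non-trivial free homotopy class, then reduce to exhibiting infinitely many suitable conjugacy classes in $\pi_1(M)$. The paper's version is terser: it simply notes that $\pi_1(M)$ contains two torsion-free generators (when $M\neq S^2$) and applies the CIPP theorem to the infinitely many conjugacy classes in the subgroup they generate. Your case-by-case treatment (torus via coprime pairs, higher genus via free subgroups, non-orientable surfaces via the orientable double cover and the inequality $c_u(\widetilde L)\leq c_u(L)$) is a more verbose but entirely valid expansion of the same idea; the double-cover detour for non-orientable $M$ is unnecessary, since the paper's two-torsion-free-generators argument already applies directly to any closed surface with infinite $\pi_1$. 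One point worth noting: you explicitly acknowledge that $\mathbb{RP}^2$ falls outside your argument, and in fact the paper's proof tacitly assumes $M\neq\mathbb{RP}^2$ as well, since $\pi_1(\mathbb{RP}^2)\cong\Z/2\Z$ has no torsion-free element; so the lemma as stated, taken literally with only the exclusion of $S^2$, is not established for $\mathbb{RP}^2$ by either your argument or the paper's. You are being honest about a genuine (if minor) gap that the paper leaves implicit.
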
 
\begin{proof}
Since that the surface $ M $  has genus $ g \geq 1 $,  its fundamental group $\pi_1(M)$  that has at least two generators free of torsion.
Then, 
and applying  \cite[Theorem 27]{Contreras_Iturriaga_Paternain_Paternain:2000} to each homotopy class in the subgroup generated by these two generators, we certainly get
 infinitely many closed orbits in $E_L^{-1}(c)$ which are geometrically different.
\end{proof}

\smallskip

We recall that, 
if $ M =S^2$ and  $ c > c_u(L)=c(L) $, 
 the  restriction of the  Lagrangian flow  to the   
energy level $ E_L^{-1}(c) $ is a  reparametrization  of a  geodesic flow  in the unit
tangent bundle for an appropriate   Finsler metric on $ S^2$  (cf. \cite[Corollary2]
{Contreras_Iturriaga_Paternain_Paternain:1998a}).  
As we  mentioned in the introduction,
in the setting of generic geodesic flows, the results of Franks \cite{Franks:92} and Bangert \cite{Bangert:93} assert that any Riemannian metric on $ S^2$ has infinitely many geometrically distinct closed geodesics.
Many results for geodesic flows
of a Riemannian metric  remain valid  for Finsler metrics but, in
contrast with the  Riemannian case,  there exist examples of bumpy
Finsler metrics on $ S^2$ with only two closed geodesics. These
examples were given by  Katok in \cite{Katok:1973} and were studied
geometrically by Ziller in \cite{Ziller:1983}. 
On the other hand,  a  result by   Rademacher shows that, if  the  Riemanniam manifold is  closed, simply connected manifold and its rational cohomology ring $
H^*(M,\mathbb{Q})$ is monogenic, then either there are   infinitely many closed geodesics,  or there are at 
least  one non-hyperbolic closed geodesic,  see  \cite[theorem 3.1(b)]{Rademacher:1989}.  
This result remains valid for bumpy Finsler metrics on $ S^2$ (cf. \cite[ Section 4 ]{Rademacher:1989}). So, we have the following proposition that is a particular case of the Rademacher's theorem.

\begin{prop}
\label{Rademacher}
Let $ F: TS^2 \to \R $ be a bumpy Finsler metric on $S^2$.  Suppose that there are only finitely many closed geodesics for $ F $ on $ S^2$. Then there is at least one non-hyperbolic closed geodesic.
\end{prop}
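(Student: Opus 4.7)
The plan is to reduce the claim to Rademacher's theorem (Theorem 3.1(b) of \cite{Rademacher:1989}) and its Finsler counterpart (Section 4 of the same paper), so the task splits into (i) checking Rademacher's topological hypotheses for $S^2$, and (ii) recalling the Morse-theoretic mechanism that makes the proof work. For (i), $S^2$ is closed and simply connected, and its rational cohomology ring $H^*(S^2;\mathbb{Q})\cong \mathbb{Q}[x]/(x^2)$ is monogenic (generated in degree $2$), so the hypotheses are immediate.

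The heart of Rademacher's argument is a global Morse theory on the free loop space $\Lambda S^2$ applied to the energy functional $E$ of the Finsler metric $F$. Critical circles of $E$ are exactly the $S^1$-orbits of closed geodesics of $F$ together with their iterates $\gamma^m$. The bumpy hypothesis is precisely the statement that every closed geodesic is non-degenerate in Bott's sense, so that Bott's iteration formula applies and controls the Morse indices and nullities of $\gamma^m$ linearly in $m$ via the mean index $\bar{\alpha}(\gamma)$. The contrapositive of the desired statement is: if all closed geodesics are hyperbolic, then there must be infinitely many. Assuming for contradiction only finitely many closed geodesics $\gamma_1,\ldots,\gamma_N$, all hyperbolic, hyperbolicity forces the iteration pattern of the indices $\mathrm{ind}(\gamma_i^m)$ to be perfectly rigid (no nullities appear and the index grows exactly as $m\bar{\alpha}(\gamma_i)+O(1)$).

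One then confronts this rigid growth with the rational topology of $\Lambda S^2$. Because $S^2$ is simply connected with monogenic rational cohomology, the Sullivan model of $\Lambda S^2$ yields bounded but infinitely-supported Betti numbers of a prescribed form, which via the Morse inequalities imposes infinitely many critical orbits with prescribed index distribution. The number of index-$k$ critical orbits produced by finitely many hyperbolic geodesics and their iterates grows only linearly in $k$ and is distributed along the $N$ arithmetic progressions $\{m\bar{\alpha}(\gamma_i)\}_m$, which Rademacher shows cannot account for the homological lower bound from $H_*(\Lambda S^2;\mathbb{Q})$. This contradiction proves at least one $\gamma_i$ must fail to be hyperbolic.

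The main obstacle is transferring this Riemannian-style argument to the Finsler setting, where reversibility is lost and $\gamma$, $\gamma^{-1}$ need not be the same geometric object. This is exactly what Rademacher addresses in Section 4 of \cite{Rademacher:1989}: he checks that the Palais--Smale condition, the Hilbert manifold structure on $\Lambda S^2$, the non-degeneracy interpretation of "bumpy", and Bott's iteration formula all persist in the Finsler category, after which the monogenic-cohomology counting argument goes through unchanged. Once these verifications are imported, the conclusion follows directly.
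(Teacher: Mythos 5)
Your proof takes the same route as the paper: both reduce the proposition to Rademacher's Theorem~3.1(b) and its Finsler extension from Section~4 of \cite{Rademacher:1989}, after observing that $S^2$ is closed, simply connected, and has monogenic rational cohomology. The paper stops at the citation, while you also sketch the underlying Morse-theoretic mechanism on $\Lambda S^2$ (Bott iteration, index growth for hyperbolic orbits, comparison with Betti numbers); that extra exposition is accurate and consistent with Rademacher's argument, but it is supplementary rather than a different proof.
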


\smallskip

The following proposition proves the Theorem \ref{thm_surface} for high energy levels, i.e. $ c> c_u(L) $. 
 
\begin{prop}
\label{case hight energy}
Let $ L: TM \to \R$ be a Tonelli Lagrangian on a closed surface $M$ and let  $ c > c_u(L)$.  Then there is  a potential $u: M \to \R$, with the $C^2$-norm  arbitrarily small, such that $ h_{top}(L-u,c)>0.$ 
\end{prop}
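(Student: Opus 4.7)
The plan is to distinguish two scenarios: either some arbitrarily $C^2$-small perturbation of $L$ admits a non-hyperbolic closed orbit at energy $c$, or no such perturbation does. In the first case I will reduce directly to Proposition~\ref{energy with a non-hyperbolic orbit}. In the second case, that is when $0\in\mbox{\rm int}_{C^2}\m H(L,c)$, I will show that the flow at energy $c$ necessarily has infinitely many closed orbits and then invoke Corollary~\ref{cor_manyperiodic} with $u=0$. For surfaces $n=1$, so $\m G(1,L,c)=\m F^2(M)$, making the genericity hypothesis of Theorem~\ref{T_maim-u} and its corollary automatic. Continuity of $e_0$ and $c_u$ in the $C^2$ topology ensures that the conditions $c>e_0(L-u)$ and $c>c_u(L-u)$ persist on a small $C^2$ neighborhood of $0$, so I may freely use them for the perturbations I construct.

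In the first scenario, fix an arbitrarily $C^2$-small $u_1\in\m F^2(M)$ for which $L-u_1$ possesses a non-hyperbolic closed orbit at energy $c$. Applying Proposition~\ref{energy with a non-hyperbolic orbit} to $L-u_1$ yields a further $C^r$-small potential $u_2$ (with $r\geq 4$, hence in particular $C^2$-small) such that $h_{top}((L-u_1)-u_2,c)>0$. Setting $u:=u_1+u_2$ completes this case.

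In the second scenario I must show that $\phi_t^L|_{E_L^{-1}(c)}$ has infinitely many closed orbits; then Corollary~\ref{cor_manyperiodic} applied to $u=0$ yields $h_{top}(L,c)>0$ directly. Here I split by the topology of $M$. If $M\ne S^2$, Lemma~\ref{prop infinite closed orbit for high energy} supplies the infinitude immediately, using the hypothesis $c>c_u(L)$. If $M=S^2$, simple connectivity gives $c_u(L)=c(L)$, hence $c>c(L)$; by \cite{Contreras_Iturriaga_Paternain_Paternain:1998a}, the flow on $E_L^{-1}(c)$ is, after translation by an exact one-form, a smooth reparametrization of the geodesic flow of a Finsler metric $F$ on $S^2$. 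Closed orbits of the Lagrangian flow at energy $c$ are in bijection with closed geodesics of $F$, and reparametrization preserves hyperbolicity of closed orbits. Since by assumption every closed orbit of $L$ at energy $c$ is hyperbolic, every closed geodesic of $F$ is hyperbolic; in particular its Poincar\'e map has no eigenvalue of modulus one, so no root of unity, so $F$ is bumpy. If $F$ had only finitely many closed geodesics, Proposition~\ref{Rademacher} would produce a non-hyperbolic one, contradicting the bumpiness and hyperbolicity just established. Therefore $F$ admits infinitely many closed geodesics, yielding infinitely many closed orbits of $\phi_t^L$ at energy $c$, and Corollary~\ref{cor_manyperiodic} applies with $u=0$.

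The main obstacle is the $S^2$ subcase: Katok's Finsler metrics show that finiteness of closed geodesics on $S^2$ cannot be excluded on topological grounds alone, so one cannot simply mimic the $M\ne S^2$ reduction. The argument crucially combines the $C^2$-stable hyperbolicity hypothesis of Scenario~2, which via the reparametrization forces bumpiness of the associated Finsler metric, with Rademacher's theorem. The remaining ingredients, namely continuity of the critical values under $C^2$-small potential perturbations, the passage from $L-u$ to its reparametrized Finsler flow, and the translation between $C^r$ and $C^2$ smallness when iterating Proposition~\ref{energy with a non-hyperbolic orbit}, are routine.
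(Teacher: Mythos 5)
Your proposal is correct and follows the same overall structure as the paper: split on whether $0\in\operatorname{int}_{C^2}\m H(L,c)$; in the negative case reduce to Proposition~\ref{energy with a non-hyperbolic orbit}, and in the positive case split by $M\ne S^2$ versus $M=S^2$, using Lemma~\ref{prop infinite closed orbit for high energy} and Rademacher respectively, then conclude via Corollary~\ref{cor_manyperiodic}. The one place you genuinely diverge is the $S^2$ subcase. The paper invokes the Kupka--Smale Theorem of Oliveira together with continuity of critical values to \emph{perturb} to a nearby $L-u$ whose flow at level $c$ is a reparametrization of a \emph{bumpy} Finsler geodesic flow, and then applies Rademacher. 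You instead observe that no auxiliary perturbation is needed: the standing assumption $0\in\operatorname{int}_{C^2}\m H(L,c)$ already forces every closed orbit at energy $c$ to be hyperbolic, and since hyperbolicity of the Poincar\'e map (no eigenvalue on the unit circle) is strictly stronger than non-degeneracy (no root of unity as eigenvalue), the associated Finsler metric is automatically bumpy once you check that the Legendre transform, the translation by an exact $1$-form, and the time-reparametrization all preserve hyperbolicity of closed orbits --- which they do, because each is a conjugacy or time-change leaving the Poincar\'e return map unchanged up to symplectic conjugation. This lets you apply Corollary~\ref{cor_manyperiodic} directly at $u=0$ rather than at a further perturbed potential. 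Your route is thus slightly more economical (it avoids an application of the Kupka--Smale theorem and the extra continuity-of-critical-values argument that the paper uses to keep $c>c_u(L-u)$ after perturbing), while buying the same conclusion; the paper's detour would matter only if one needed additional transversality properties beyond bumpiness, which Rademacher's theorem as stated does not require.
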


\begin{proof}
Let  $ c > c_u(L)$. If $ u_0(x) \equiv 0 \notin \mbox{\rm int}_{C^2} \m H(L,c) $, then there exist a smooth potential  $u: M \to \R$,
 with the $C^2$-norm  arbitrarily small, such that  the corresponding flow $ \left.\phi^{L-u}_t \right|_{E_{L-u}^{-1}(c)} $ has a non-hyperbolic closed orbit, 
therefore the proposition follows from Proposition~\ref{energy with a non-hyperbolic orbit}.
Let us to assume that $ u_0(x) \equiv 0 \in \mbox{\rm int}_{C^2} \m H(L,c) $. 
 Recall that $\m G(1,L,c)=\m F^2(M)$. 
By Theorem~\ref{T_maim-u}, the set    $ \overline{ Per(L,u_0,c)}$ is  hyperbolic. 
We have two cases, $ M \not= S^2$ or $ M= S^2$.  If $ M \not= S^2$, it follows from  Lemma~\ref{prop infinite closed orbit for high energy}, that the hypotheses  of Corollary~\ref{cor_manyperiodic} holds, then $ h_{top}(L,c)>0$.    If $ M = S^2 $,
by the Kupka-Smale Theorem \cite{Oliveira:2008}
and Corollary~2 in \cite{Contreras_Iturriaga_Paternain_Paternain:1998a},  
we  can choose  $ u \in \mbox{\rm int}_{C^2} \m H(L,c) $ with the $C^2$-norm  arbitrarily small such that $c_u(L-u) < c $ (by    continuity of the critical values, cf. Lemma~2.2-1 in  \cite{Contreras_Iturriaga_Coloquio:1999a}), 
for which we  ensure  that  the flow $ \left.\phi^{L-u}_t \right|_{E_{L-u}^{-1}(c)} $  is a reparametrization of a geodesic flow for a bumpy   Finsler metric on $ M$.   
 Then   the Rademacher's result (Proposition~\ref{Rademacher})  implies that  $ \left.\phi^{L-u}_t \right|_{E_{L-u}^{-1}(c)} $ has and infinite number of closed orbits. 
Then the Corollary~\ref{cor_manyperiodic} imply that  $ h_{top}(L-u,c)>0$ .  
\end{proof}

\bigskip

For a small energy level case of a Tonelli Lagrangian on arbitrary surface $M$, we have the  following recent result proved by
  L. Asselle and M.Mazzucchelli   in \cite{Asselle_Mazzucchelli:2019}.

 \begin{thm}[\cite{Asselle_Mazzucchelli:2019}, Theorem 1.3]
 \label{Assele_Mazz_2}
Let $M$ be a closed surface and let $ L: TM \to \R $ be a Tonelli Lagrangian. Then there exists a set $ \m{E}(L) \subset ( e_0(L), c_u(L)) $ with full Lebesgue measure, such that, for every energy level $ \kappa \in  \m{E}(L)$ the Lagrangian flow of $ L$ has infinitely many closed orbits with energy $\kappa$.  
 \end{thm}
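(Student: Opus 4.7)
The plan is to prove this via variational methods on the free-period action functional, combined with Struwe's monotonicity trick. The appearance of a full-measure (not all) set $\m{E}(L)$ is dictated by the strategy: Palais-Smale can fail at generic energies below $c_u(L)$, and monotonicity yields differentiability (hence PS compactness) only almost everywhere. The fact that $\kappa < c_u(L)$ means there are contractible loops of negative action, so plain minimization is not available and one must work with minimax.

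First, I would set up the free-period action on the Hilbert manifold $\Lambda = \Lambda_0 H^1(S^1, M) \times (0,\infty)$ of contractible $H^1$-loops with variable period:
\[
S_\kappa(\gamma, T) = T\int_0^1 L\bigl(\gamma(s), \tfrac{1}{T}\dot\gamma(s)\bigr)\, ds + \kappa T.
\]
Critical points $(\gamma, T)$ with $T > 0$ correspond exactly to contractible closed orbits of $\phi_t^L$ at energy $\kappa$ parametrized on $[0,T]$: the variation in $\gamma$ gives Euler-Lagrange, and the variation in $T$ gives $E_L(\gamma, \dot\gamma / T) = \kappa$. The superlinearity of $L$ gives good coercivity in the $\gamma$-direction for fixed $T$, but neither in the $T\to 0$ nor the $T\to\infty$ direction.

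Second, I would exploit the non-trivial topology of $\Lambda$ on a closed surface (the free loop space has rich cohomology in all degrees) to define a family of minimax values $c_j(\kappa) = \inf_{\sigma \in \m{F}_j} \sup_{x \in \sigma} S_\kappa(x)$, where the $\m{F}_j$ are suitable homological sweepout classes. The comparison $S_{\kappa'} - S_\kappa = (\kappa' - \kappa)T$ shows that each $c_j(\kappa)$ is non-decreasing and locally Lipschitz in $\kappa$, hence differentiable at a.e.\ $\kappa$ by Lebesgue. At a point $\kappa_0$ of differentiability, Struwe's argument produces near-minimizing sweepouts whose near-maxima have their period $T$ \emph{bounded} above and away from zero, because the derivative at a critical point satisfies $c_j'(\kappa_0) = T$. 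This yields a bounded Palais-Smale sequence at level $c_j(\kappa_0)$, and a standard regularity/compactness argument then extracts a genuine critical point, giving one contractible closed orbit at energy $\kappa_0$.

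Third, to upgrade from one orbit to infinitely many, I would run a Bangert-style iteration combined with the preceding minimax. If only finitely many geometrically distinct contractible closed orbits $\alpha_1, \ldots, \alpha_N$ existed at energy $\kappa_0$, their iterates produce a discrete and sparse set of possible critical values of $S_{\kappa_0}$, whereas the monotonicity formula forces $c_j(\kappa)$ to vary continuously and sweep through arbitrarily many levels as $j$ or $\kappa$ varies; choosing a minimax class avoiding all iterate-levels gives a new critical point, contradicting finiteness. The main obstacle, and the technical heart of the argument, is the period control in Struwe's step: without the identity $c_j'(\kappa_0) = T$ at critical points and the careful selection of almost-maximizing sequences, one loses compactness in the $T \to 0$ or $T \to \infty$ strata and recovers only a generalized critical point (a point or a broken trajectory) rather than a true closed orbit. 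This period control is precisely what restricts the conclusion to a set of full, but not complete, measure of energies $\kappa$.
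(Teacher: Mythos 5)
Note first that the paper under review does \emph{not} prove this result; it is quoted verbatim as Theorem~1.3 of Asselle and Mazzucchelli (2019), so your sketch should be measured against the argument in that reference rather than against anything in the present paper.

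Your first two steps are on target. The free-period action functional $S_\kappa$ on $\Lambda_0 H^1(S^1,M)\times(0,\infty)$, the observation that stationarity in $T$ encodes the energy constraint $E_L=\kappa$, the monotonicity $S_{\kappa'}-S_\kappa=(\kappa'-\kappa)T$, and Struwe's device of converting almost-everywhere differentiability of minimax values in $\kappa$ into bounded Palais--Smale sequences are exactly the standard machinery in this line of work (Contreras, Abbondandolo--Macarini--Mazzucchelli--Paternain, Asselle--Benedetti, Asselle--Mazzucchelli), and your diagnosis that period control is the obstruction that restricts the conclusion to a full-measure set of energies is correct.

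Step three, however, has a genuine gap. The appeal to ``rich cohomology in all degrees'' of the free loop space is false on most closed surfaces: when $M$ has positive genus it is aspherical, and the component $\Lambda_0 M$ of contractible loops deformation retracts onto $M$ itself (the evaluation fibration has contractible fiber $\Omega_0 M$), so its cohomology vanishes above degree~$2$. There is no infinite Gromoll--Meyer/Rademacher tower of independent minimax classes $\mathcal F_j$ to sweep through, and the proposed dichotomy ``finitely many orbits give sparse critical values, but the $c_j(\kappa)$ sweep through many levels'' simply does not get off the ground outside the cases $S^2$ and $\mathbb{RP}^2$. What the actual argument exploits instead is that for $\kappa<c_u(L)$ the functional $S_\kappa$ is \emph{unbounded below} on $\Lambda_0$ (there are contractible loops of arbitrarily negative $(L+\kappa)$-action), so the basic critical objects produced are \emph{waists} — local minimizers of $S_\kappa$ that are not global minimizers and whose period is bounded away from $0$ and $\infty$. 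A waist, together with the sublevel set $\{S_\kappa<0\}$, creates a mountain-pass geometry; the multiplicity is then extracted from the iterates of waists and the associated mountain-pass critical points via index-iteration and Bangert/Hingston-type arguments adapted to surfaces, not from a homological sweepout. Struwe's monotonicity is used precisely to guarantee, at almost every $\kappa$, both the existence of a waist and the period bounds that keep the mountain-pass Palais--Smale sequence from degenerating; without replacing your step three by this local-minimizer-plus-mountain-pass scheme, the argument does not close.
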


The following proposition completes the proof of Theorem~\ref{thm_surface}.

\begin{prop}\label{case low energy}
Let $M$ be a closed surface. Let   $L:TM\rightarrow \R$ be a  Tonelli Lagrangian and  $ c \in ( e_0(L), c_u(L)]$. Then there is  a potential $u: M \to \R$, with the $C^2$-norm arbitrarily small, such that $ h_{top}(L-u,c)>0.$
\end{prop}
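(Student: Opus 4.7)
The plan is to mirror the argument of Proposition~\ref{case hight energy} for the high-energy case, replacing the use of the Franks--Bangert--Rademacher results on closed geodesics by the Asselle--Mazzucchelli theorem (Theorem~\ref{Assele_Mazz_2}), which guarantees infinitely many closed orbits at almost every low energy. I would split the argument into two cases according to whether the zero potential lies in $\mbox{\rm int}_{C^2}\m H(L,c)$. If $u_0\equiv 0\notin \mbox{\rm int}_{C^2}\m H(L,c)$, then arbitrarily close to $0$ in $C^2$ there exists a potential $u$ for which $\phi_t^{L-u}|_{E_{L-u}^{-1}(c)}$ admits a non-hyperbolic closed orbit, and Proposition~\ref{energy with a non-hyperbolic orbit} provides the desired further small perturbation producing positive topological entropy.

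In the opposite case, I would fix a $C^2$-open neighborhood $\mathcal V$ of $0$ inside $\mbox{\rm int}_{C^2}\m H(L,c)$. The key observation to exploit is that a constant potential $u(x)\equiv s$ leaves the Euler--Lagrange equations unchanged and merely shifts the energy function by the same constant: $E_{L-s}=E_L+s$. Consequently $\phi_t^{L-s}|_{E_{L-s}^{-1}(c)}$ coincides, as a flow, with $\phi_t^L|_{E_L^{-1}(c-s)}$. Since $c\in(e_0(L),c_u(L)]$, for every sufficiently small $s>0$ one has $c-s\in(e_0(L),c_u(L))$; Theorem~\ref{Assele_Mazz_2} asserts that $\m E(L)$ has full Lebesgue measure in this interval, so there exist arbitrarily small $s>0$ for which simultaneously $c-s\in\m E(L)$ and the constant $u\equiv s$ lies in $\mathcal V$.

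For any such $s$, because $\dim(M)=2$ gives $\m G(1,L,c)=\m F^2(M)$, the potential $u\equiv s$ lies in $\mbox{\rm int}_{C^2}\m H(L,c)\cap\m G(1,L,c)$ and satisfies $c>e_0(L-u)=e_0(L)+s$. Theorem~\ref{T_maim-u} then yields that $\overline{Per(L,u,c)}$ is hyperbolic, while the identification above transports the infinitely many periodic orbits of $\phi_t^L$ at energy $c-s$ to infinitely many periodic orbits of $\phi_t^{L-u}$ in $E_{L-u}^{-1}(c)$. Corollary~\ref{cor_manyperiodic} then delivers $h_{top}(L-u,c)>0$, completing the proof. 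The only mildly delicate point I anticipate is the boundary case $c=c_u(L)$: there the three requirements on $s$ (lying in $\mathcal V$, keeping $c-s\in(e_0(L),c_u(L))$, and landing in $\m E(L)$) force $s>0$ strictly, but since $\m E(L)$ has full measure in the open interval $(e_0(L),c_u(L))$ and accumulates at its right endpoint, compatibility of the three conditions persists.
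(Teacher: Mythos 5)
Your proposal is correct and follows essentially the same route as the paper: both use a constant potential to realize the energy level $c$ for $L-u$ as a low energy level $\kappa = c-s \in \m E(L)$ of $L$ (invoking Theorem~\ref{Assele_Mazz_2}), then conclude via Corollary~\ref{cor_manyperiodic} in the hyperbolic case or Proposition~\ref{energy with a non-hyperbolic orbit} otherwise. The only cosmetic difference is that you split on whether $0 \in \mbox{\rm int}_{C^2}\m H(L,c)$ before choosing the constant, whereas the paper chooses the constant $u_k$ first and then splits on whether $u_k \in \mbox{\rm int}_{C^2}\m H(L,c)$; incidentally, your sign convention $k = c-\kappa$ (i.e.\ $s>0$, $u\equiv s$) is the arithmetically correct one, while the paper's "$k=\kappa-c$" appears to be a typo.
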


\begin{proof}
Observe that if  $ u_k(x)=k $ denotes  a constant potential,  then the Euler-Lagrange equation \eqref{E-L} of $ L-u_k $ is the same equation of $ L$, 
\[E_{L-u_k}(x,v)=
 E_{L}(x,v) +k,\]
 and $ e_0(L-u_k)= e_0(L) +k $.
Let $  \m E(L) \subset (e_0,c_u(L))$ be given by Theorem~\ref{Assele_Mazz_2}, then  the subset $ \m{E}(L)$ has  full Lebesgue measure, and,  for  every $ \kappa \in  \m{E}(L)$ the  flow 
\[ \left. \phi_t^L \right|_{E^{-1}_L(\kappa)}: E^{-1}_L(\kappa) \rightarrow E^{-1}_L(\kappa)\]
 has infinitely many closed orbits. 
 Given  $ c \in  ( e_0(L), c_u(L)]$ we set $ \epsilon_0=(c-e_0(L))/{3}$.   
Since that the subset $  \m E(L)$ has full Lebesgue  measure as subset of  $ (e_0,c_u(L))$,
for all $ 0<\epsilon <\epsilon_0 $ we can take $ \kappa \in \m E(L)$,
such that $ |\kappa-c|< \epsilon$. 
So, using $k=\kappa-c$, we have that the corresponding Lagrangian flow of $ L-u_k$ are the same of $ L$ and  $  E^{-1}_{L-u_k}(c)= E^{-1}_L(\kappa)$.
 Therefore the flow $ \left. \phi_t^{L-u_k} \right|_{E^{-1}_{L-u_k}(c)} $ has  infinitely many closed orbits.  
 If $ u_k \in \mbox{\rm int}_{C^2}\m H(L,c)$,  by Corollary~\ref{cor_manyperiodic}, we have $ h_{top}(L-u_k,c)>0$.  If $ u_k \notin \mbox{\rm int}_{C^2} \m H(L,c) $, then there exists a smooth potential  $u: M \to \R$,
 with the $C^2$-norm  arbitrarily small, such that the corresponding flow $ \left.\phi^{L-(u_k+u)}_t \right|_{E_{L-(u_k+u)}^{-1}(c)} $ has a non-hyperbolic closed orbit, 
therefore the proposition follows from Proposition~\ref{energy with a non-hyperbolic orbit}.

\end{proof}

\section*{Acknowledgments}
We are grateful to C. Carballo and M. J. Dias Carneiro for the helpful conversations.
J. A. G. Miranda is very grateful to CIMAT for the hospitality and to CNPq-Brazil for the partial financial support.


\bibliographystyle{amsalpha}


\providecommand{\bysame}{\leavevmode\hbox to3em{\hrulefill}\thinspace}
\providecommand{\MR}{\relax\ifhmode\unskip\space\fi MR }
\providecommand{\MRhref}[2]{%
  \href{http://www.ams.org/mathscinet-getitem?mr=#1}{#2}
}
\providecommand{\href}[2]{#2}

\end{document}